\numberwithin{equation}{section}
\newcommand{\ignore}[1]{}
\newcommand{\dum}[1]{\mathsf{#1}} 
\newtheorem{definition}{Definition}[section]
\newtheorem{proposition}[definition]{Proposition}
\newtheorem{theorem}[definition]{Theorem}
\newtheorem{remark}[definition]{Remark}
\newtheorem{lemma}[definition]{Lemma}
\newtheorem{corollary}[definition]{Corollary}
\newcommand{\N}{\mathbb{N}}
\newcommand{\Q}{{\mathbb Q}}
\newcommand{\R}{\mathbb{R}}
\newcommand{\n}{\mathbf{n}}
\newcommand{\m}{\mathbf{m}}
\newcommand{\z}{\mathsf{z}}
\newcommand{\0}{\mathbf{0}}
\newcommand{\T}{\mathsf{T}}
\newcommand{\pol}[1]{|#1|_p}
\newcommand{\Ppol}{\mathsf{P}}
\newcommand{\opnorm}{\@ifstar\@opnorms\@opnorm}
\newcommand{\@opnorms}[1]{%
	\left|\mkern-1.5mu\left|\mkern-1.5mu\left|
	#1
	\right|\mkern-1.5mu\right|\mkern-1.5mu\right|
}
\newcommand{\@opnorm}[2][]{%
	\mathopen{#1|\mkern-1.5mu#1|\mkern-1.5mu#1|}
	#2\mathclose{#1|\mkern-1.5mu#1|\mkern-1.5mu#1|}
}
\definecolor{darkred}{rgb}{0.9,0.1,0.1}
\definecolor{darkblue}{rgb}{0,0,0.7}
\definecolor{darkgreen}{rgb}{0,0.5,0}
\newcounter{step}
\newcounter{refstep} 
\title[A tree-free approach to regularity structures]{A tree-free approach to regularity structures: the regular case for quasi-linear equations} 
\author{Pablo Linares}
\author{Felix Otto}
\begin{document}
\begin{abstract}
	We give a motivation and gentle introduction into the
	regularity structure and model introduced in \cite{OSSW21},
	which fall into the framework of Hairer,
	but have a greedier index set than the one given by trees.
	We do this here for a simple quasi-linear parabolic equation and 
	assume that the driving noise is so regular that no renormalization
	is needed. 
	
	\smallskip
	
	We introduce the abstract model space $\mathsf{T}$ and its grading,
	the pre-model $\mathbf{\Pi}$, the centered model $\Pi_x$, the structure group $\mathsf{G}$, 
	and the re-centering transformations $\Gamma_{xy}$. Using integration and reconstruction,
	we establish the desired estimates on $\Pi_x$ and $\Gamma_{xy}$, 
	which here are deterministic since we deal with the regular case.
\end{abstract}
	\maketitle

\tableofcontents

\section{Introduction}
In these notes, we build a regularity structure and a model for the quasi-linear PDE
\begin{align}\label{eq}
	(\partial_2-\partial_1^2)u=a(u)\partial_1^2u + \xi,
\end{align}
which we think of as uniformly parabolic, where $\xi$ is space-time periodic with vanishing average in the unit cell.
The purpose of regularity structures introduced in \cite{Ha14} is to provide the
framework for a solution theory for \eqref{eq} with a random -- and thus typically rough 
-- driver $\xi$, like space-time white noise. 
The regularity of noise and solutions is captured on the scale of (negative) 
H\"older spaces\footnote{since the interest is in stationary noise,
there is no loss in using these uniform spaces},
with respect to the parabolic Carnot-Carathéodory metric 
\begin{equation}\label{dist01}
	|y-x|:=|y_1-x_1|+\sqrt{|y_2-x_2|}.
\end{equation}
If $\xi\in C^{\alpha-2}$ we cannot expect better than $u\in C^\alpha$, so that
the product $a(u)\partial_1^2u$ can be given a classical sense only when the function
$a(u)$ is more regular than the distribution $\partial_1^2u$ is irregular,
which translates into $\alpha+(\alpha-2)>0$, i.~e.~$\alpha>1$. Note that space-time
white noise corresponds to $\alpha=\frac{1}{2}-$. It is for the treatment of the
singular case, which corresponds to $\alpha<1$ here, that regularity structures were created. 
For an introduction to the theory of regularity structures, we refer the reader to \cite{Ha16}.

\medskip

We follow the tree-free approach to regularity structures introduced in \cite{OSSW21} 
for quasi-linear equations of the type \eqref{eq}. 
This framework was algebraically characterized in \cite{LOT21}; 
the renormalization and stochastic estimates were provided in \cite{LOTT21},
all for the full singular subcritical regime $\alpha\in (0,1)$.
The goal of these notes is to provide a careful motivation and elementary construction of
a regularity structure and a model within this approach.
For the sake of clarity, we will restrict to the range\footnote{ The construction of Section \ref{sec:fullmodel} would work for any $\alpha>0$ after minor modifications; it is for Section \ref{sec:analyticalproperties} that we require $\alpha\in (1,2)$, although the more regular case $\alpha>2$ can be treated more easily.} $\alpha\in (1,2)$ 
which does not require a renormalization. 
These notes are self-contained; no knowledge of \cite{OSSW21,LOT21,LOTT21} is required.

\medskip

As the labeling of the variables in \eqref{eq}, $x_1$ for space and $x_2$ for time, 
alludes to, we treat the parabolic equation like an elliptic one -- in line with a good
tradition in Schauder theory. In fact, when it comes to this text, there is no
essential difference between the parabolic and elliptic operators,
$\partial_2u-a(u)\partial_1^2u$ and $-\partial_2^2u-a(u)\partial_1^2u$, respectively. 
Also, we believe to be able to treat 
the higher-dimensional version with a tensorial non-linearity $a$ in a similar way.  
In view of this parallel treatment of parabolic and elliptic problems, we allow ourselves
to assume periodicity in both variables $x_1$ and $x_2$.

\medskip

We essentially follow the set-up of regularity structures, also in terms of language and notation.
The main difference resides in a smaller abstract model space $\mathsf{T}$, 
which is indexed not by trees but by multi-indices\footnote{in fact, our 
$\beta$'s correspond to certain linear combinations of trees, cf. \cite[Sections 6 and 7]{LOT21}}. 
This index set of multi-indices comes from (formally) endowing the manifold of solutions 
with coordinates and taking partial derivatives with respect to these, giving rise to
the (pre-)model $\mathbf{\Pi}$. The
coordinates come in two sets: The first set $\{\mathsf{z}_k\}_{k\in\mathbb{N}_0}$
parameterizes the nonlinearity $a$ via a power series representation of the latter.
Hence in the spirit of rough paths for stochastic ODEs, we treat all nonlinearities in parallel. 
A second set of coordinates $\{\mathsf{z}_{\bf n}\}_{{\bf n}\in\mathbb{N}_0^2\setminus\{{\bf 0}\}}$
is required for a PDE, and parameterizes a large-scale polynomial behavior. 
Rather than on the model space $\mathsf{T}$, 
our focus is more on its algebraic dual $\mathsf{T}^*$, 
where the model takes values (cf. \cite[Definition 3.3]{Ha16}) in.
As a consequence, we think of the structure group $\mathsf{G}$ as acting on $\mathsf{T}^*$.

\medskip

Next to these constructions, which have a mostly algebraic character, 
we establish estimates in line with the axioms \cite[(3.2)]{Ha16}. 
While a similar inductive principle is used in establishing the stochastic estimates in
\cite{LOTT21}, the estimates here are of purely deterministic nature.


\section{Warm up: the restricted model}\label{sec:restrictedmodel}

\subsection{First algebraic approach: towards the model}\label{subsec:algebraicapproach}
\mbox{}

We follow the philosophy of rough paths in the sense that we seek a theory 
independent on the specific function $a$, just depending on how $a$ enters the equation. 
This lifting to a different level of description has the advantage of linearizing the problem, 
at the expense of making it (more) infinite dimensional. 
Here, it leads to the PDE analogue $\Pi$ of the iterated integrals in rough path theory.
Incidentally, this type of lifting of a nonlinear setting to an infinite-dimensional linear setting is ubiquitous in mathematics, where we think in particular of Young measures.

\medskip

For the sake of this discussion, we assume that $a$ is analytic, so that
\begin{equation}\label{rm5}
\z_{k}[a]:=\frac{1}{k!}\frac{d^{k}a}{dv^{k}}(0),\;k\geq 0
\end{equation}
provide coordinates on the space of all $a$'s. 
The function $a$ can be recovered\footnote{for $v$ within the domain of convergence} via
\begin{equation}\label{rm1}
	a(v)=\sum_{k\geq 0}\z_{k}[a] v^{k}.
\end{equation}
Let $u$ be a space-time periodic function and $\lambda$ a constant satisfying
\begin{equation}\label{rm2}
\left\{\begin{array}{l}
(\partial_{2}-\partial_1^2)u+\lambda=a(u)\partial_{1}^{2}u+\xi,\\
\fint_{[0,1)^2} u=0,
\end{array}
\right.
\end{equation}
where the second line denotes the average over the unit cell $[0,1)^2$.
By standard small data theory for nonlinear parabolic equations\footnote{which we only
appeal to in this motivation}, 
\eqref{rm2} has a unique small solution 
$(u,\lambda)$ for sufficiently small and regular $\xi$. 
In view of \eqref{rm1}, we thus may consider this $(u,\lambda)$ as a
(smooth) function of the variables $\{\z_{k}\}_{k\geq 0}$, 
take partial derivatives with respect to these variables, and set them to zero. 
In other words, given a multi-index\footnote{which just
means that $\beta(k)=0$ for all but finitely many $k$'s} $\beta=(\beta(k))_{k\geq 0}$, we apply
\begin{equation}\label{rm4bis}
\partial^{\beta}:=\frac{1}{\beta!}\prod_{k\geq 0}
\Big(\frac{\partial}{\partial \z_{k}\big|_{\z_{k}=0}}\Big)^{\beta(k)}
\end{equation}
to \eqref{rm2}, which we rewrite as
\begin{equation}\label{rm3}
(\partial_{2}-\partial_{1}^{2})u+\lambda=\sum_{k\geq 0}\z_{k}[a] u^{k}\partial_{1}^{2}u+\xi.
\end{equation} 
By Leibniz' rule, we obtain\footnote{Note that for fixed $\beta$ the sums on the r.~h.~s. are effectively finite:
The condition $e_{k}+\beta_1 +... + \beta_{k+1}=\beta$ is non-empty only if
$\beta(k)\neq 0$, which is true for only finitely many $k$, and only if $\beta_j\le\beta$,
which again is only true for finitely many $\beta_j$'s.}
\begin{align}\label{rm3bis}
(\partial_{2}-\partial_{1}^{2})\partial^{\beta}u+\partial^{\beta}\lambda&
=\sum_{k\geq 0}\sum_{\beta_0 + ... + \beta_{k+1}=\beta} 
\partial^{\beta_{0}}\z_{k} \partial^{\beta_{1}}u\cdots \partial^{\beta_{k}}u\partial_{1}^{2} 
\partial^{\beta_{k+1}}u+\delta_{\beta}^{0}\xi\nonumber\\
&=\sum_{k\geq 0}\sum_{e_{k}+\beta_1 + ... + \beta_{k+1}=\beta} \partial^{\beta_{1}}u\cdots \partial^{\beta_{k}}u\partial_{1}^{2}\partial^{\beta_{k+1}}u+\delta_{\beta}^{0}\xi.
\end{align}
Note that the r.~h.~s. involves $\partial^{\beta'}u$
only if $\beta'$ has a length $\sum_{k\ge 0}\beta'(k)$ strictly less than the one of $\beta$.
Hence \eqref{rm3bis} amounts to a hierarchy of linear PDEs,
which together with $\fint_{[0,1)^2}\partial^\beta u$ characterize
the family $\{\partial^\beta u\}_{\beta}$.
Hence we may, by induction in the length of $\beta$, 
construct a tuple $(\Pi_\beta,C_\beta)$ of $(\mbox{periodic function},\mbox{constant})$ such that
\begin{equation}\label{rm4}
\left\{\begin{array}{l}
(\partial_{2}-\partial_{1}^{2})\Pi_{\beta}+C_{\beta}=\sum_{k\geq 0}\sum_{e_{k}+\beta_1 +...+\beta_{k+1} =\beta}\Pi_{\beta_{1}}\cdots\Pi_{\beta_{k}}\partial_{1}^{2}\Pi_{\beta_{k+1}}+\delta_{\beta}^{0}\xi,\\
\Pi_{\beta}\text{ periodic, }\fint_{[0,1)^2}\Pi_{\beta}=0.
\end{array}
\right.
\end{equation}
This will be rigorously done in Subsection \ref{subsec:statmodel} in a more general context.

\medskip

There is a more compact way of writing \eqref{rm4} which involves an algebraic structure
useful in the sequel. Indeed, since $(\Pi_\beta(x),C_\beta)$ 
was obtained by applying \eqref{rm4bis} to $(u(x),\lambda)$, we may interpret
$(\Pi_\beta(x),C_\beta)_{\beta}$ as coefficients of a power series in 
the variables $\{\mathsf{z}_k\}_{k\ge 0}$. This power series is only formal since
the series $\sum_{\beta}\Pi_\beta(x)\mathsf{z}^\beta$, where $\mathsf{z}_\beta$ denotes the monomial
$\prod_{k\ge 0}\mathsf{z}_k^{\beta(k)}$, may not converge, even when evaluated for 
polynomial $a$.
However, the space $\mathbb{R}[[\mathsf{z}_k]]$ of formal power series in the 
(infinite set of) variables $\{\mathsf{z}_k\}_{k\ge 0}$ with coefficients in $\mathbb{R}$,
makes perfect sense: Its elements $\pi\in\mathbb{R}[[\mathsf{z}_k]]$ are the (infinite) sequences
$(\pi_{\beta})_{\beta}$ in $\mathbb{R}$. 
The linear space $\mathbb{R}[[\mathsf{z}_k]]$ of formal power series is naturally a 
commutative unital algebra, where the product of two elements $\pi$, $\pi' $ is given by
\begin{equation}\label{rmmult}
(\pi\pi')_{\beta}=\sum_{\gamma+\gamma'=\beta}\pi_{\gamma}\pi'_{\gamma'}
\end{equation}
and the neutral element, which we denote by $\dum{1}$, is given by $\dum{1}_{\beta}=\delta_{\beta}^{0}$. 

\medskip

Equipped with this notation and structure, we may regard $\Pi$
as a periodic function of $x$ with values in $\mathbb{R}[[\mathsf{z}_k]]$ and
$C$ as an element of $\mathbb{R}[[\mathsf{z}_k]]$. Moreover, in view of \eqref{rmmult} we may
rewrite \eqref{rm4} more compactly as
\begin{equation}\label{rm6}
\left\{\begin{array}{l}
(\partial_{2}-\partial_{1}^{2})\Pi+C=\sum_{k\geq 0}\dum{z}_{k}\Pi^{k}\partial_{1}^{2}\Pi+\xi\dum{1},\\
\Pi\;\mbox{periodic,}\;\fint_{[0,1)^2}\Pi=0.
\end{array}
\right.
\end{equation}

\medskip

While \eqref{rm6} will be made completely rigorous, we may only formally return
to the general solution $(u,\lambda)$. While it is tempting to interpret 
$\pi\in\R[[\z_k]]$ as a nonlinear but algebraic functional on the space of $a$'s via
%
%
\begin{align}\label{co09}
\pi[a]=\sum_{\beta}\pi_\beta\z^\beta[a],
\end{align}
this is only formal, because even for a polynomial $a$, this series does not
converge for all $\pi\in\mathbb{R}[[\mathsf{z}_k]]$. However, it is convenient to
think along the lines of \eqref{co09} since in view of the derivation of $\Pi$ we
formally have
\begin{align}\label{co14}
(u,\lambda)=(\Pi[a],C[a])\;\;\mbox{is a solution of}\;\eqref{rm2}.
\end{align}

\medskip


\subsection{Structure for shifts: towards the structure group}\label{subsec:shifts}
\mbox{}

An emanation of the specific structure of the equation \eqref{eq} is the following
invariance: If $u$ is a solution then $\tilde u=u-\pi^{(\0)}$, 
for some shift $\pi^{(\0)}\in\mathbb{R}$, is a solution
of the same equation but for the shifted nonlinearity $\tilde a=a(\cdot+\pi^{(\0)})$. 
We will use this invariance to ``tilt'' space-time averages 
in \eqref{rm2} by purely algebraic means. 
For this, we need to generalize the shift 
$\pi^{(\0)}\in\mathbb{R}$ 
to an element\footnote{ We identify $\pi^{(\0)}\in\mathbb{R}$
with $\pi^{(\0)}\mathsf{1}\in\mathbb{R}[[\mathsf{z}_k]]$.} 
$\pi^{(\0)}\in\mathbb{R}[[\mathsf{z}_k]]$. Since in view of \eqref{co09},
$\pi^{(\0)}\in\mathbb{R}[[\mathsf{z}_k]]$ can be (formally) interpreted as an $a$-dependent shift,
it formally gives rise to the (nonlinear) map on the space of $a$'s
\begin{align}\label{co02}
a\mapsto \tilde a:=a(\cdot +\pi^{(\0)}[a]).
\end{align}
This map (formally) lifts to an algebra morphism\footnote{ The notation $^*$ will become clear in Section \ref{sec:fullmodel}.} $\Gamma^*$ of $\mathbb{R}[[\mathsf{z}_k]]$ via 
\begin{align}\label{co01}
\Gamma^*\pi [a]:=\pi[\tilde a]\quad\mbox{with}\;\tilde a\;\mbox{defined in}\;\eqref{co02}.
\end{align}
\ignore{
As a consequence of Taylor's formula for a polynomial $a$, 
\eqref{co01} translates into
\begin{equation}\label{gamma1}
\Gamma^* \dum{z}_{k}=\sum_{l\geq 0}\left(\begin{matrix}
k+l\\k
\end{matrix}\right)(\pi^{(\0)})^{l}\dum{z}_{k+l}.
\end{equation}
Indeed, 
\begin{align*}
a(v+\pi^{(\0)})&\underset{\eqref{rm2}}{=}\sum_{k\geq 0}\frac{1}{k!}\frac{d^{k}a}{dv^{k}}(0)\sum_{l=0}^{k}\left(\begin{matrix}k\\l\end{matrix}\right)v^{l}(\pi^{(\0)})^{k-l}\\
&=\sum_{l\geq 0}\Big(\sum_{k\geq 0}\left(\begin{matrix}l+k\\l\end{matrix}\right)\frac{1}{(k+l)!}\frac{d^{k+l}a}{dv^{k+l}}(\pi^{(\0)})^{k}\Big)v^{l}.
\end{align*}
We note that the sum in \eqref{gamma1} is effectively finite, meaning that when considering a
component $\beta$, all but finitely many terms vanish. 
}
Still formally, as we shall presently argue,
$\Gamma^*$ provides a tilt of the space-time average in \eqref{rm6}
in the sense that
\begin{align}\label{cco2}
(\tilde\Pi,\tilde C):=(\Gamma^* \Pi+\pi^{(\0)},\Gamma^* C)
\end{align}
satisfies
\begin{align}\label{co07}
\left\{\begin{array}{l}(\partial_2-\partial_1^2)\tilde\Pi+\tilde C=\sum_{k\ge 0}\mathsf{z}_k\tilde\Pi^k\partial_1^2\tilde\Pi+\xi\mathsf{1},\\
\fint_{[0,1)^2}\tilde\Pi=\pi^{(\0)}.
\end{array}\right.
\end{align}
On a formal level, this is almost tautological.
Indeed, the second line in \eqref{co07} follows from the second line of \eqref{rm6}
and the fact that $\fint_{[0,1)^2}$ commutes with the linear $\Gamma^*$. 
In view of the definition \eqref{co01} of $\Gamma^*$
and the formal principle \eqref{co14}, $(\Gamma^*\Pi[a],\Gamma^*C[a])$ 
is the solution of \eqref{rm2} with $a$ replaced by $\tilde a$.
By definition \eqref{co02} of $\tilde a$ and the above-mentioned invariance,
$(u,\lambda):=(\Gamma^*\Pi[a]+\pi^{({\bf 0})},\Gamma^*C[a])$ satisfies the first
line of \eqref{rm2} (for the original $a$). By definition \eqref{cco2} and
the formal principle \eqref{co14}, $(\tilde\Pi,\tilde C)$ thus satisfies
the first line of \eqref{co07}.
In the more general context of Subsection \ref{subsec:centeredmodel}, we will provide a rigorous argument 
for this transformation property.

\medskip

We now characterize more explicitly the $\Gamma^*$ (formally) defined in \eqref{co01} for given 
$\pi^{(\0)}\in\mathbb{R}[[\mathsf{z}_k]]$. To this purpose, we replace $\pi^{(\0)}$
by $t\pi^{(\0)}$ in \eqref{co02} and consider the corresponding $\Gamma_t^*$.
It follows immediately from \eqref{co02} and \eqref{co01} that $\Gamma_t^*$ satisfies
the initial value problem
\begin{align}\label{gamma2}
\frac{d}{dt}\Gamma_t^*=\pi^{(\0)}\Gamma_t^*D^{(\0)}\quad\mbox{and}\quad\Gamma_{t=0}^*=\textnormal{id},
\end{align}
where $D^{(\0)}$ is the infinitesimal generator of shifts of $u$-space, that is,
\begin{align}\label{co03}
D^{(\0)}\pi[a]=\frac{d}{dv}_{|v=0}\pi[a(\cdot + v)].
\end{align}
From \eqref{gamma2}, we (formally) read off the identity
\begin{align}\label{exp1}
\Gamma^*=\sum_{l\ge 0}\frac{1}{l!}(\pi^{(\0)})^l(D^{(\0)})^l.
\end{align}
We stress that since multiplication with $\pi^{(\0)}$ and applying
$D^{(\0)}$ do not commute, $\Gamma^*$ is not the matrix exponential of $\pi^{(\0)}D^{(\0)}$.
In fact, \eqref{gamma2} is not the flow arising from an ODE on $\mathbb{R}[[\mathsf{z}_k]]$; 
in particular, the usual group property does not hold
-- it would in fact contradict the group property \eqref{gp1} derived below.
One should rather see the ODE in \eqref{gamma2} as providing a flow
on the (non-linear) space of algebra automorphisms ${\rm Alg}(\mathbb{R}[[\mathsf{z}_k]])$,
by considering more general initial conditions than the identity.
In Subsection \ref{subsec:structuregroup}, formula \eqref{exp1} will serve as a rigorous definition of $\Gamma^*$.

\medskip

As follows immediately from \eqref{rm5} and \eqref{co03}, on the coordinates $\mathsf{z}_k$,
$D^{(\0)}$ acts as $D^{(\0)}\mathsf{z}_k=(k+1)\mathsf{z}_{k+1}$. Since by \eqref{co03},
$D^{(\0)}$ is a derivation on the algebra $\mathbb{R}[[\mathsf{z}_k]]$, it thus must be of the form
\begin{align}\label{der1}
D^{(\0)}=\sum_{k\ge 0}(k+1)\mathsf{z}_{k+1}\partial_{\mathsf{z}_k}.
\end{align}
%
The appearance of a derivation is
natural, since its defining properties $D\pi\pi'=(D\pi)\pi'+\pi(D\pi')$ 
and $D\mathsf{1}=0$ are the infinitesimal versions of the algebra morphism properties 
$\Gamma^*\pi\pi'=(\Gamma^*\pi)(\Gamma^*\pi')$ and $\Gamma^*\mathsf{1}=\mathsf{1}$,
respectively. 

\medskip

From \eqref{der1} it is not immediate that $D^{(\0)}$ is well-defined
as an element of ${\rm End}(\mathbb{R}[[\mathsf{z}_k]])$. A sufficient condition for a map $M$ to be well-defined as an element of ${\rm End}(\mathbb{R}[[\mathsf{z}_k]])$ is that in terms of its matrix representation\footnote{The matrix representation
	$M_\beta^\gamma$ is defined through $(M\pi)_\beta
	=\sum_{\gamma}M_\beta^\gamma\pi_\gamma$.} 
the set $\{\gamma\;|\;M_\beta^\gamma\not=0\}$ is finite for every $\beta$; this is equivalent to $M$ being the transpose of an element of ${\rm End}(\mathbb{R}[\z_k])$.
As can be seen from \eqref{der1}, the matrix representation of $D^{(\0)}$ is given by
\begin{align}\label{der2}
(D^{(\0)})^\gamma_\beta=\sum_{k\ge0}
\left\{\begin{array}{ll}
(k+1)\gamma(k)&\mbox{if}\;\beta+e_k=\gamma+e_{k+1},\\
0&\mbox{otherwise}.
\end{array}\right.
\end{align}
Now indeed, for given $\beta$, there are only finitely many $k\ge 0$ such that 
$\beta+e_k=\gamma+e_{k+1}$ for some $\gamma$ (namely those $k$'s for which $\beta(k+1)\not=0$)
and thus only finitely many $\gamma$'s for which $(D^{(\0)})^\gamma_\beta\not=0$.

\medskip

In a related spirit, it is not clear that the sum \eqref{exp1} is effectively finite,
meaning that it is finite when evaluated in a matrix element $(\Gamma^*)_\beta^\gamma$;
it is also not clear that the resulting $\{\gamma\;|\;(\Gamma^*)_\beta^\gamma\not=0\}$ is finite 
for every $\beta$, so that \eqref{exp1} defines an element in 
${\rm End}(\mathbb{R}[[\mathsf{z}_k]])$.
In fact, both will be established in Lemma \ref{lemts2} in a more general context. 
The latter finiteness property is related to the triangularity property\footnote{ However, \eqref{co03bis} does not imply finiteness, due to the $e_0$-component; this will become important later, see e.~g. Subsection \ref{subsec:strategy}.}
\begin{align}\label{co03bis}
(\Gamma^*-\textnormal{id})_\beta^\gamma\neq 0\implies\;|\gamma|_s<|\beta|_s,
\end{align}
where $|\beta|_s:=\sum_{k\ge0}k\beta(k)$ is a scaled length of the multi-index $\beta$.
Indeed, in view of \eqref{gamma2}, \eqref{co03bis} follows (formally) from
\begin{align}\label{rm20}
(D^{(\0)})_\beta^\gamma\neq 0\implies\;|\gamma|_s<|\beta|_s,
\end{align}
which follows from \eqref{der2}. However we give a separate argument: \eqref{rm20} is the matrix version of the mapping property
\begin{align}\label{co05}
\pi_\gamma=0\;\mbox{for}\;|\gamma|_s\le k
\quad\Longrightarrow\quad(D^{(\0)}\pi)_\beta=0\;\mbox{for}\;|\beta|_s\le k+1,\;\mbox{for any }k\ge 0,
\end{align}
which we claim to be true. Indeed, returning to our interpretation \eqref{co09},
the component-wise defined
subspaces appearing in \eqref{co05} are characterized by a scaling property:
\begin{align*}
\pi_\beta=0\;\mbox{for}\;|\beta|_s\le k
\quad\Longleftrightarrow\quad \lim_{\lambda\uparrow\infty}\lambda^k\pi_\lambda=0,
\end{align*}
where we write $\pi_\lambda[a]:=\pi[a(\frac{\cdot}{\lambda})]$.
Since in view of \eqref{co03} we have $D^{(\0)}\pi_\lambda=\lambda(D^{(\0)}\pi)_\lambda$,
the mapping property \eqref{co05} follows.

\medskip

We note that \eqref{co03bis} does not imply the finiteness of 
$\{\gamma\;|\;(\Gamma^*)_\beta^\gamma\not=0\}$ since
$|\gamma|_s$ does not control the component $\gamma(0)$; we will give
a rigorous argument for this in the more general context of Subsection \ref{subsec:structuregroup}.
However, once finiteness is clear, \eqref{co03bis} does imply the invertibility of $\Gamma^*$, 
that is, the unique solvability of the equation $\pi'=\Gamma^*\pi$.
Indeed, in view of \eqref{co03bis} we may write the system of linear equations as
$\pi'_\beta=\pi_\beta
+\sum_{l=0}^{|\beta|_s-1}\sum_{\gamma:|\gamma|_s=l}(\Gamma^*)_\beta^\gamma\pi_\gamma$,
so that solvability follows by induction over $|\beta|_s$.

\medskip

Hence $\Gamma^*$ is an algebra automorphism of $\mathbb{R}[[\mathsf{z}_k]]$.
In fact, the set of $\Gamma^*$'s generated by $\pi^{(\0)}\in\mathbb{R}[[\mathsf{z}_k]]$
through \eqref{co01} or rather \eqref{exp1} form a subgroup
of the automorphism group.
It remains to argue that this set is closed under composition. Indeed, this is
true in the following specific sense: Suppose $\pi^{(\0)}$ and $\pi'^{(\0)}$ generate $\Gamma^*$ and $\Gamma'^*$, respectively. 
Then 
\begin{equation}\label{gp1}
\pi^{(\0)}+\Gamma^*\pi'^{(\0)}\text{ generates }\Gamma^*\Gamma'^*. 
\end{equation}
On the level of the formal definition \eqref{co01}, namely
%
\begin{align*}
\Gamma^*\pi[a]=\pi[a(\cdot +\pi^{(\0)}[a])],
\end{align*}
this is almost tautological.
Momentarily introducing the notation $\pi':=\Gamma'^*\pi$ and 
$a':=a(\cdot+\pi^{(\0)}[a])$, this characterization yields the desired identity:
\begin{align*}
\Gamma^*\Gamma'^*\pi[a]=\Gamma^*\pi'[a]=\pi'[a']=\pi[a'(\cdot + \pi'^{(\0)}[a'])]=\pi[a(\cdot + \pi^{(\0)}[a]+ \Gamma^* \pi'^{(\0)}[a])].
\end{align*}
Alike before, we postpone the rigorous argument to Subsection \ref{subsec:structuregroup}.


\section{The full model}\label{sec:fullmodel}
Our goal is to capture, at least formally, the entire solution 
manifold of \eqref{eq}. While the restricted model $\Pi$ from the previous subsection 
formally captures all periodic solutions of \eqref{eq}, it clearly does not capture
all local behavior, as can be seen by considering the special case of $a\equiv 0$:
In this case, we may add to the periodic solution $v$ any ``caloric'' function $p$,
i.~e.~a function that solves $(\partial_2-\partial_1^2)p=0$ and thus automatically is
analytic on $\mathbb{R}^2$ -- we restrict to polynomials
in line with the algebraic approach we embarked on
in Section \ref{sec:restrictedmodel}. In view of the Cauchy-Kovalevskaya theorem, 
it is plausible that also in the presence of an analytic nonlinearity $a$,
the solution manifold (locally) can be parameterized by caloric $p$'s; loosely speaking, we
seek to deform the affine solution manifold at $a\equiv 0$. For this purpose, it is convenient 
to give up the restriction that $q$ is caloric, and, as a consequence, to 
relax\footnote{as it turns out, this relaxation is ultimately immaterial} 
\eqref{eq} to hold only up to a polynomial $q$.

\medskip

We will proceed in several steps: In Subsection \ref{subsec:statmodel}, following the approach
of Subsection \ref{subsec:algebraicapproach}, we construct what we assimilate to
the pre-model\footnote{We adopt the language of \cite[Definition 2.20]{Che22}.} Alongside,
we introduce the model space (or rather its algebraic dual $\T^*$) and the set of homogeneities $\mathsf{A}$. 
In Subsection \ref{subsec:centeredmodel}, we construct the centered model
$\Pi_x$, our main object of interest. We show that the centered model arises
from the pre-model by a purely algebraic transformation of the model space. 
In Subsection \ref{subsec:structuregroup}, we construct this transformation and show that it gives
rise to the structure group $\mathsf{G}$ (rather its pointwise dual $\mathsf{G}^*$), and to the re-expansion maps $\Gamma_{xy}^*$.

\subsection{The pre-model}\label{subsec:statmodel}
\mbox{}

Let us motivate our Ansatz for the pre-model $\mathbf{\Pi}$ as we did for $\Pi$
in Subsection \ref{subsec:algebraicapproach}.
There we formally obtained the model by taking derivatives of the periodic solution $u$
of vanishing space-time average, see \eqref{rm2}, with respect to the nonlinearity $a$. 
We now have in mind a non-periodic solution $u$. 
In order to retain uniqueness, we consider $u$ to be of the following form:
We first double space-time variables by considering polynomials in a second space-time variable 
$y\in\mathbb{R}^2$ with coefficients that are periodic functions of space-time\footnote{ This is the tensor product of periodic functions with polynomials.}; 
we then restrict such a function to the diagonal. 
We extend \eqref{rm2} to 
\begin{align}\label{co10}
\left\{\begin{array}{l}
(D_2-D_1^2)u+q=a(u)D_1^2 u + \xi,\\
\fint_{[0,1)^2}u(y)=p(y),
\end{array}\right.
\end{align}
where $u$ is of the above form, $p$ and $q$ are polynomials, the operator $D_i$ now acts on both the periodic and the polynomial variables\footnote{ i.~e. $D_i = \partial_i + \frac{\partial}{\partial y_i}$}, and $\fint_{[0,1)^2}u(y)$ denotes the average of $u$ over the periodic variable with the polynomial variable evaluated at $y$.
As for \eqref{rm2}, given $(a,p)$, 
one expects existence and uniqueness of $(u,q)$ in a small data setting.
This morally justifies considering derivatives of $(u,q)$ with respect to $(a,p)$. In view of \eqref{co07},
we only need to consider $p$ up to a constant; arbitrarily fixing an origin, we thus
restrict to $p$'s with $p(0)=0$. Hence the analogue of \eqref{rm5} are the variables
\begin{align}\label{co13}
\z_k[a,p] := \frac{1}{k!}\frac{d^k a}{dv^k} (0),\;k\geq0,\quad\quad \z_{\bf n}[a,p]:=\frac{1}{\n!}\frac{\partial^{\bf n}p}{\partial y^{\bf n}}(0),\; {\bf n}\in\mathbb{N}_0^2\setminus\{(0,0)\},
\end{align}
where $\frac{\partial^{\bf n}}{\partial y^{\bf n}}$ 
$:=(\frac{\partial}{\partial y_1})^{n_1}(\frac{\partial}{\partial y_2})^{n_2}$.
We will henceforth write ${\bf n}\not=\0$ for ${\bf n}\in\mathbb{N}_0^2\setminus\{(0,0)\}$. 
Multi-indices $\beta$ over $k\geq 0$ and $\n\neq \0$
measure the frequency of the variables $\{\z_k,\z_\n\}$, so that we can write monomials
\begin{equation}\label{co15}
\z^\beta := \prod_{k\ge 0,\n\not=\0} \z_k^{\beta(k)} \z_{\bf n}^{\beta (\n)},
\end{equation}
and derivatives with respect to $\{\z_k,\z_\n\}$. Similarly to
\eqref{rm4} in Subsection \ref{subsec:algebraicapproach} we obtain \eqref{fm3} below,
where now $\mathbf{\Pi}_\beta$ is a polynomial with periodic coefficients, and $P_\beta$
is a polynomial. Introducing $\mathbb{R}[[\mathsf{z}_k,\mathsf{z}_{\bf n}]]$,
this can be compactly written: \eqref{rm6} extends to \eqref{fm2} below.

\medskip

Extending \eqref{co09}, we formally
think of an element $\pi$ of $\mathbb{R}[[\mathsf{z}_k,\mathsf{z}_{\bf n}]]$ 
as a functional on the space of pairs $(a,p)$ of nonlinearities and polynomials:
\begin{align}\label{topp.13}
\pi[a,p]=\sum_{\beta}\pi_\beta\prod_{k\ge0,{\bf n}\not=\0}
\big(\frac{1}{k!}\frac{d^k a}{dv^k}(0)\big)^{\beta(k)}
\big(\frac{1}{{\bf n}!}\frac{\partial^{\bf n}p}{\partial y^{\bf n}}(0)\big)^{\beta({\bf n})}.
\end{align}
This formally implies
\begin{align}\label{co12}
(u,q)=(\mathbf{\Pi}[a,p],P[a,p])
\;\;\mbox{solves}\;\eqref{co10}.
\end{align}

\medskip

Summing up, and moving beyond this heuristic derivation, there are two
effects of adjoining space-time polynomials: 
The first one is to add a polynomial space-time variable $y$; 
the second one is to add the variables $\{\mathsf{z}_{\bf n}\}_{\bf n\not=0}$. 
The first effect means that we replace the space of periodic functions by 
the space of functions that are diagonal restrictions of polynomials in $y$ 
with coefficients being periodic functions.  
The second effect means that we replace the algebra $\mathbb{R}[[\mathsf{z}_k]]$ by
the algebra $\mathbb{R}[[\mathsf{z}_k,\mathsf{z}_{\bf n}]]$.

\begin{proposition}\label{defstatmodel}
	There exits a unique pair $(\mathbf{\Pi},P):\R^{2}\times\R^{2}\to \R[[\dum{z}_{k},\dum{z}_{\n}]] $ such that for every multi-index $\beta$, $\mathbf{\Pi}_{\beta}(x,y)$ is a polynomial in $y$ the coefficients of which are periodic functions in $x$, $P_\beta$ is a polynomial, and they satisfy
	\begin{equation}\label{fm2}
	\left\{\begin{array}{l}
	\left(D_{2}-D_{1}^{2}\right)\mathbf{\Pi} + P=\mathbf{\Pi}^-,\\
	\mathbf{\Pi}^-:=\sum_{k\geq 0}\dum{z}_{k}\mathbf{\Pi}^{k}D_{1}^{2}\mathbf{\Pi}+\xi\dum{1},\\
	\fint_{[0,1)^2}\mathbf{\Pi}(\cdot,y)=\sum_{\n\not=\0}y^{\n}\dum{z}_{\n}.
	\end{array}
	\right.
	\end{equation}


	We call $\mathbf{\Pi}$ \textbf{pre-model}. It satisfies, in addition, the following property: 
	\begin{equation}\label{pop12}
		\mathbf{\Pi}_{\beta}\not\equiv 0 \,\implies\,\left\{\begin{array}{l}
			\beta=e_{\n}\text{ for some }\n\not=\0,\text{ or}\\
			\left[\beta\right]:= \sum_{k\geq 0}k\beta(k)-\sum_{\n\neq \0}\beta(\n)\geq 0.
		\end{array}\right.
	\end{equation}
	Moreover, 
	\begin{align}
	\mathbf{\Pi}_{e_{\n}}(\cdot,y)&=y^{\n}\text{ for all }\n\neq \0,\label{sm11}\\
	P(y) &= -(\frac{\partial}{\partial y_2} - \frac{\partial^2}{\partial y_1^2})
\sum_{\n\neq \0} 
y^{\n} \z_\n + \fint_{[0,1)^2}\mathbf{\Pi}^-(y),\label{jan01}\\
	\deg \mathbf{\Pi}_\beta &\leq |\beta|_p := \sum_{\n\neq \0} |\n| \beta(\n),\label{jan02}
	\end{align}	
	where\footnote{ This choice of the scaled length of $\n$ reflects the parabolic scaling. In general, one chooses a scale length according to the scaling dictated by the differential operator.}
	\begin{equation}\label{polyn}
		|\n|:= n_1 + 2 n_2
	\end{equation}
	and $\deg \mathbf{\Pi}_\beta$ denotes the parabolic polynomial degree of $\mathbf{\Pi}_\beta$.
\end{proposition}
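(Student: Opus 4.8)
The plan is to construct $(\mathbf{\Pi},P)$ by induction on the length $\sum_{k\ge0}\beta(k)+\sum_{\n\neq\0}\beta(\n)$ of the multi-index $\beta$, exactly as indicated after \eqref{co13}, and to verify all claimed properties along the induction. Expanding \eqref{fm2} componentwise via \eqref{rmmult}, one gets for each $\beta$ a linear parabolic equation
\begin{equation*}
(D_2-D_1^2)\mathbf{\Pi}_\beta + P_\beta = \mathbf{\Pi}^-_\beta,\qquad
\fint_{[0,1)^2}\mathbf{\Pi}_\beta(\cdot,y)=\delta_{\beta=e_\n\text{ some }\n}\,y^{\n},
\end{equation*}
where $\mathbf{\Pi}^-_\beta=\sum_{k\ge0}\sum_{e_k+\beta_1+\cdots+\beta_{k+1}=\beta}\mathbf{\Pi}_{\beta_1}\cdots\mathbf{\Pi}_{\beta_k}D_1^2\mathbf{\Pi}_{\beta_{k+1}}+\delta_\beta^0\xi\mathsf{1}$ depends only on $\mathbf{\Pi}_{\beta'}$ with $\beta'$ of strictly smaller length (the footnote after \eqref{rm3} gives the finiteness of the sum, and each such $\mathbf{\Pi}^-_\beta$ is again a polynomial in $y$ with periodic coefficients by the inductive hypothesis). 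The base cases are $\beta=0$, where $\mathbf{\Pi}^-_0=\xi\mathsf{1}$ and one solves $(D_2-D_1^2)\mathbf{\Pi}_0+P_0=\xi$ with zero average, and $\beta=e_\n$, which I treat specially: I simply \emph{declare} $\mathbf{\Pi}_{e_\n}(\cdot,y)=y^{\n}$, which has zero periodic average and the prescribed average $y^{\n}$, giving \eqref{sm11}; consistency then forces $P_{e_\n}=-(\frac{\partial}{\partial y_2}-\frac{\partial^2}{\partial y_1^2})y^{\n}+\fint\mathbf{\Pi}^-_{e_\n}$, which is the $e_\n$-component of \eqref{jan01}.

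The core solvability step is the following lemma, to be invoked at each $\beta\neq e_\n$: given a polynomial $g(x,y)$ with periodic coefficients and $y$-degree at most $d$, there is a unique pair $(\mathbf{\Pi}_\beta,P_\beta)$ with $\mathbf{\Pi}_\beta$ a polynomial in $y$ of degree $\le d$ with periodic coefficients of vanishing periodic average, $P_\beta$ a polynomial in $y$ of degree $\le d$, solving $(D_2-D_1^2)\mathbf{\Pi}_\beta+P_\beta=g$. This is proved by expanding in the monomials $y^{\m}$: at each fixed $\m$, taking the periodic average of the equation determines the $y^{\m}$-coefficient of $P_\beta$ in terms of the $y$-polynomial part of $g$ and of the already-known lower-degree terms of $\mathbf{\Pi}_\beta$ (since $D_i=\partial_i+\partial_{y_i}$, the operator $(D_2-D_1^2)$ acting on a degree-$d$ polynomial feeds a monomial $y^{\m}$ only from $y^{\m}$ via $(\partial_2-\partial_1^2)$ on its coefficient, plus strictly higher $y$-degrees via $\partial_{y_i}$ — so one solves from top $y$-degree downward); having fixed $P_\beta$'s coefficient, the coefficient of $\mathbf{\Pi}_\beta$ at $y^{\m}$ solves a periodic problem $(\partial_2-\partial_1^2)(\,\cdot\,)=(\text{known periodic, mean zero})$, uniquely solvable with mean zero on the torus. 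This simultaneously yields $\deg\mathbf{\Pi}_\beta\le\deg g$ and the formula \eqref{jan01} for $P$ (which is just the $\m=0$, or rather the full, bookkeeping of the above, packaged using \eqref{fm2} and \eqref{sm11}).

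The remaining two items are propagated through the induction. For the degree bound \eqref{jan02}, $\deg\mathbf{\Pi}_\beta\le|\beta|_p$: the base cases give $\deg\mathbf{\Pi}_{e_\n}=|\n|=|e_\n|_p$ and $\deg\mathbf{\Pi}_0=0$; for the step, since the solvability lemma gives $\deg\mathbf{\Pi}_\beta\le\deg\mathbf{\Pi}^-_\beta$, it suffices to see that each product $\mathbf{\Pi}_{\beta_1}\cdots\mathbf{\Pi}_{\beta_k}D_1^2\mathbf{\Pi}_{\beta_{k+1}}$ with $e_k+\beta_1+\cdots+\beta_{k+1}=\beta$ has $y$-degree $\le\sum_{j}\deg\mathbf{\Pi}_{\beta_j}\le\sum_j|\beta_j|_p=|\beta|_p$, using additivity of $|\cdot|_p$ under $\beta=e_k+\sum\beta_j$ (note $|e_k|_p=0$) and that $D_1^2$ does not raise $y$-degree. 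For the dichotomy \eqref{pop12}: if $\mathbf{\Pi}_\beta\not\equiv0$ and $\beta\neq e_\n$, one shows $[\beta]\ge0$ by induction — again $[\cdot]$ restricted to $e_k$ and to the $\beta_j$'s is additive under $\beta=e_k+\sum_{j=1}^{k+1}\beta_j$ with $[e_k]=k$, so $[\beta]=k+\sum_{j=1}^{k+1}[\beta_j]$; by inductive hypothesis each nonzero $\mathbf{\Pi}_{\beta_j}$ has either $\beta_j=e_{\n_j}$ (contributing $[e_{\n_j}]=-1$) or $[\beta_j]\ge0$, and since there are exactly $k+1$ factors $\beta_1,\dots,\beta_{k+1}$ the worst case is $[\beta]\ge k-(k+1)=-1$; the off-by-one is closed by observing that the term $D_1^2\mathbf{\Pi}_{\beta_{k+1}}$ vanishes when $\beta_{k+1}=e_\n$ unless $|\n|\ge2$, in which case $[e_\n]=-1$ but $D_1^2 y^{\n}$ lowers $|\n|$ by $2$ so effectively $[\beta_{k+1}]$ is raised — more cleanly, one checks directly from $\mathbf{\Pi}_{e_\n}=y^\n$ that $D_1^2\mathbf{\Pi}_{e_\n}\neq0$ forces $n_1\ge2$ hence a gain; alternatively, and more robustly, \eqref{pop12} for general $\beta$ is derived together with \eqref{jan02} by noting $[\beta]=|\beta|_p$-type bookkeeping is not needed and instead tracking that the constraint $e_k+\beta_1+\cdots+\beta_{k+1}=\beta$ with all $\mathbf{\Pi}_{\beta_j}\not\equiv0$ cannot have too many $e_\n$-factors. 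I expect this last point — getting the exact constant $0$ rather than $-1$ in \eqref{pop12}, i.e. correctly accounting for the $e_\n$-legs and the action of $D_1^2$ on them — to be the one genuinely delicate bookkeeping step; everything else is the standard torus Schauder/Fourier solvability packaged inductively.
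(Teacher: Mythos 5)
Your overall route -- componentwise expansion of \eqref{fm2}, induction on the length of $\beta$, and a solvability lemma for $(D_2-D_1^2)U+P=f$ obtained by averaging out the periodic variable and sweeping downward in the $y$-degree by Fourier series -- is essentially the paper's (its Lemma \ref{lem:wp1}), and your treatment of \eqref{sm11}, \eqref{jan01} and \eqref{jan02} matches it. The genuine gap is exactly the point you flag at the end: the dichotomy \eqref{pop12}. Your counting argument only yields $[\beta]\ge -1$, and neither of your proposed fixes closes the off-by-one. The problematic case is $\beta=e_k+e_{\n_1}+\cdots+e_{\n_{k+1}}$, i.e.\ all legs purely polynomial and $[\beta]=-1$, cf.~\eqref{pc1}. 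For such $\beta$ the r.~h.~s.\ of \eqref{fm3} does \emph{not} vanish in general: by \eqref{sm11} its terms are of the form $y^{\n_{1}}\cdots y^{\n_{k}}\,\partial_{y_1}^2 y^{\n_{k+1}}$, which is nonzero as soon as the first component of $\n_{k+1}$ is $\ge 2$; concretely, for $\beta=e_0+e_{(2,0)}$ one has $\mathbf{\Pi}^-_\beta=D_1^2\mathbf{\Pi}_{e_{(2,0)}}=2$. Moreover, the observation that $D_1^2 y^{\n}$ lowers $|\n|$ by $2$ is irrelevant for \eqref{pop12}: $[\beta]$ counts occurrences of the variables $\z_{\n}$ in the multi-index, and these are unaffected by what $D_1^2$ does to the function $y^{\n}$. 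The mechanism that actually closes the case (and which the paper uses) is different: for $\beta$ of the form \eqref{pc1} the r.~h.~s.\ is a \emph{pure polynomial in $y$}, constant in the periodic variable, hence equal to its periodic average; by the uniqueness statement of the solvability lemma such a source is entirely absorbed into $P_\beta$, so that $\mathbf{\Pi}_\beta\equiv 0$ even though $\mathbf{\Pi}^-_\beta\not\equiv 0$ (this is precisely Remark \ref{rem:new}). Once this class is disposed of, the additivity $[\beta]=k+\sum_j[\beta_j]$ together with the induction hypothesis gives $[\beta]\ge 0$ for every remaining $\beta$ with $\mathbf{\Pi}_\beta\not\equiv 0$, which is \eqref{pop12}. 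Your ``more robust alternative'' (limiting the number of $e_{\n}$-legs) is not substantiated and cannot work as stated, since decompositions with \emph{only} purely polynomial legs do occur and do produce nonzero right-hand sides.

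A secondary, fixable point: your solvability lemma asserts uniqueness only within the class $\deg\le d$, whereas the Proposition claims uniqueness of $(\mathbf{\Pi}_\beta,P_\beta)$ among all pairs (polynomial in $y$ with periodic coefficients, polynomial) with no a priori degree bound; this unrestricted uniqueness is also what feeds the induction. Your top-down argument does deliver it -- for the difference $(w,p)$ of two solutions, averaging in the periodic variable and using $\fint_{[0,1)^2}w=0$ forces $p=0$, and then downward induction on the $y$-degree kills the coefficients of $w$, as in the paper's proof of Lemma \ref{lem:wp1} -- but it needs to be said, and in particular the degree bound $\deg\mathbf{\Pi}_\beta\le\deg\mathbf{\Pi}^-_\beta$ should be a conclusion rather than an a priori restriction.
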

%
We will call the multi-indices that satisfy either of the conditions in \eqref{pop12} \textbf{populated},
a nomenclature that refers to the fact that, by Proposition \ref{defstatmodel}, 
these multi-indices are the ones that potentially give non-zero contributions to $\mathbf{\Pi}$, 
i.~e. \textit{populate} the model. In addition, we will call multi-indices of the form of the first line of the r.~h.~s. of
\eqref{pop12} \textbf{purely polynomial}. 
The model $\mathbf{\Pi}$ has values in the corresponding
subspace of $\R[[\dum{z}_{k},\dum{z}_{\n}]]$.
\begin{definition}\label{def:modspace}
	We define the space $\dum{T}^*\subset\R[[\dum{z}_{k},\dum{z}_{\n}]]$ as
	\begin{equation*}
\dum{T}^*:=\left\{\pi\in \R[[\dum{z}_{k},\dum{z}_{\n}]]\;| 
\; \pi_{\beta}\neq 0\implies \beta\text{ is populated, i.~e.~satisfies
\eqref{pop12}}\right\}.
	\end{equation*}
\end{definition}
The population conditions \eqref{pop12} induce a decomposition $\T^* = \bar{\T}^* \oplus \tilde{\T}^*$, where
\begin{align*}
	\bar{\T}^*\hspace*{-4pt}&:=\hspace*{-3pt} \{ \pi\in \R[[\dum{z}_{k},\dum{z}_{\n}]]\;\hspace*{-1pt}|\hspace*{-1pt} 
\; \pi_{\beta}\neq 0 \hspace*{-0.15cm}\implies \hspace*{-0.15cm} \beta\text{ is purely polynomial, i.~e.~
satisfies the first line of }\eqref{pop12} \},\\
	\tilde{\T}^*\hspace*{-4pt}&:=\hspace*{-3pt} \{\pi\in \R[[\dum{z}_{k},\dum{z}_{\n}]]\;\hspace*{-1pt}|\hspace*{-1pt} \; \pi_{\beta}\neq 0\hspace*{-0.15cm}\implies \hspace*{-0.15cm}\beta\text{ satisfies the second line of }\eqref{pop12}\}.
\end{align*}
The notation $\T^*$ is meant to connect to the theory of regularity structures formulated 
in \cite{Ha14,Ha16}.
Indeed, $\T^*$ can be identified with the algebraic dual of the \textbf{model space}\footnote{ An element of our space $\tilde{\T}$ actually encodes both the rooted and the planted elements in the model space of \cite{Ha14,Ha16}. Moreover, the polynomial sector $\bar{\T}$ as defined here does not contain constants. 
	As a consequence, the precise identification of model space in  \cite{Ha14,Ha16} should be 
	$\R\oplus \T\oplus \tilde{\T}$. We refer to 
	\cite[Subsection 5.3]{LOT21} for details. 
	\label{foot01}} $\T$, 
which itself contains the \textbf{polynomial sector} $\mathsf{\bar T}$. 
In fact, $\mathsf{\bar T}^*$ can be identified with the (algebraic) dual
of $\mathbb{R}[y_1,y_2]/\mathbb{R}$, the space of polynomials in $y\in\mathbb{R}^2$ 
with constants factored out, via
\begin{align}\label{cw03}
	\pi[p]=\sum_{{\bf n}\not=\0}\pi_{e_{\bf n}}\frac{1}{{\bf n}!}
	\frac{\partial^{\bf n}p}{\partial y^{\bf n}}(0),
\end{align}
which is in line with \eqref{topp.13}. Up to duality
and constants, this agrees with \cite[Assumption 3.20]{Ha16}.
In particular, the multiplication in the algebra $\mathbb{R}[[\mathsf{z}_{\bf n}]]$
is unrelated to the multiplication of polynomials in $y\in\mathbb{R}^2$, as is highlighted by
$\mathsf{z}_{\bf n}\mathsf{z}_{\bf n'}\not=\mathsf{z}_{{\bf n}+{\bf n'}}$.
As a consequence, $\mathsf{T}^*$ (as opposed to $\mathsf{\tilde T}^*$) 
is not a sub-algebra of 
$\mathbb{R}[[\mathsf{z}_k,\mathsf{z}_{\bf n}]]$. 
While $\mathbf{\Pi}$ is a $\mathsf{T}^*$-valued
function, $\mathbf{\Pi}^-$ and $P$ have values in a larger subspace of 
$\mathbb{R}[[\mathsf{z}_k,\mathsf{z}_{\bf n}]]$:
\begin{remark}\label{rem:new}
If $(\mathbf{\Pi}^-_{\beta},P_\beta)\not\equiv 0$,
$\beta$ is populated or of the form
\begin{equation}\label{pc1}
\beta = e_k + e_{\n_1} +... + e_{\n_{k+1}}
\end{equation}
with $k\ge 0$.
For a $\beta$ of the form of \eqref{pc1}, 
$\mathbf{\Pi}^-_{\beta}$ is a polynomial, and we have 
$[\beta] = -1$. 
\end{remark}
The proof of Proposition \ref{defstatmodel} is based on the following well-posedness result.
\begin{lemma}\label{lem:wp1}
	For all $(f,q)$ polynomial in $y$ with coefficients that are (smooth periodic functions, scalars), there exists a unique pair $(U,P)$ with the same features and such that
	\begin{equation}\label{staronp9}
	\left\{\begin{array}{l}
	\left(D_{2}-D_{1}^{2}\right)U+P=f,\\
	\fint_{[0,1)^2} U=q.\end{array}\right.
	\end{equation}  
	Moreover, $\deg (U-q)\leq\deg f$ and $
	\deg (P-\fint_{[0,1)^2} f) \leq\deg q-2$.
	\begin{proof}
		Let us first take the average over the periodic variable of the equation, i.~e.
		\begin{equation*}
		\left(\frac{\partial}{\partial y_{2}}-\frac{\partial^{2}}{\partial y_{1}^{2}}\right)\fint_{[0,1)^2} U+P=\fint_{[0,1)^2} f.
		\end{equation*}
		Then, since $\fint_{[0,1)^2} U=q$, $P$ is uniquely determined by $P=\fint_{[0,1)^2} f-\left(\frac{\partial}{\partial y_{2}}-\frac{\partial^{2}}{\partial y_{1}^{2}}\right) q$ which moreover implies the statement on the polynomial degree.
		Define now $\tilde{U}:=U-q$, $\tilde{f}:=f-\fint_{[0,1)^2} f$, which satisfy
		\begin{equation}\label{pde2}
		\left\{\begin{array}{l}
		\left(D_{2}-D_{1}^{2}\right)\tilde{U}=\tilde{f},\\
		\fint_{[0,1)^2} \tilde{U}=0.\end{array}\right.
		\end{equation}
		We expand both functions
		\begin{equation*}
		\tilde{U}(\cdot,y)=\sum_{\n}\tilde{U}_{\n}y^{\n},\,\,\,\,\tilde{f}(\cdot,y)=\sum_{\n}\tilde{f}_{\n}y^{\n}	
		\end{equation*}
		where there exist $\n_{\tilde{U}}$, $\n_{\tilde{f}}$ such that $|\n_{\tilde{U}}|=\deg\tilde{U}$ and $|\n_{\tilde{f}}|=\deg\tilde{f}$. Note that both $\n_{\tilde{U}}$ and $\n_{\tilde{f}}$ are fixed and finite -- though $\n_{\tilde{U}}$ is still unknown. Then by equating the coefficients we can translate \eqref{pde2} into the family of problems
		\begin{equation}\label{janeq}
		\left\{\begin{array}{l}
		\hspace*{-0.3cm}(\partial_{2}\hspace*{-0.02cm}-\hspace*{-0.02cm}\partial_{1}^{2})\hspace*{-0.02cm}\tilde{U}_{\n}\hspace*{-0.04cm}= \hspace*{-0.03cm}
2(n_{1}\hspace*{-0.05cm}+\hspace*{-0.03cm}1)\partial_{1}\tilde{U}_{\n+(1,0)}\hspace*{-0.03cm}+\hspace*{-0.04cm}
(n_{1}\hspace*{-0.05cm}+\hspace*{-0.03cm}2)(n_{1}\hspace*{-0.05cm}+\hspace*{-0.03cm}1)\tilde{U}_{\n+(2,0)}
\hspace*{-0.06cm}-\hspace*{-0.04cm}(n_{2}\hspace*{-0.05cm}+\hspace*{-0.03cm}1)\tilde{U}_{\n+(0,1)}\hspace*{-0.04cm}+\hspace*{-0.04cm}\tilde{f}_{\n}.\\
		\hspace*{-0.3cm}\fint_{[0,1)^2} \tilde{U_{\n}}=0;\end{array}\right.
		\end{equation}
We solve \eqref{janeq} by Fourier series (recalling that $\tilde f_{\bf n}$
are periodic and smooth), starting at $\n=\max\{\n_{\tilde{U}},\n_{\tilde{f}}\}$ 
and inductively decreasing $|\n|$; this is possible since, by definition of $\tilde f$ 
and by induction hypothesis on $\tilde U$, 
the r.~h.~s. of \eqref{janeq} vanishes under $\fint_{[0,1)^2}$. It remains only
to bound $\deg\tilde{U}$. If $\deg\tilde{U}>\deg\tilde{f}$, then $(\partial_{2}-\partial_{1}^{2})\tilde{U}_{\n_{\tilde{U}}}=0$, $\fint_{[0,1)^2}\tilde{U}_{\n_{\tilde{U}}}=0$, which implies $\tilde{U}_{\n_{\tilde{U}}}\equiv 0$ giving a contradiction. Therefore $\deg\tilde{U}\leq\deg\tilde{f}\leq\deg{f}$, so that the desired bound on the polynomial degree of $U$ holds.
	\end{proof}
\end{lemma}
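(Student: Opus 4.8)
The plan is to peel off $P$ by averaging, reduce to a homogeneous problem for $\tilde U:=U-q$, expand this problem in powers of the polynomial variable $y$ to obtain a triangular hierarchy of scalar constant-coefficient periodic parabolic equations, and solve those one at a time by Fourier series. First I would average the first line of \eqref{staronp9} over the periodic cell $[0,1)^2$ in $x$; since $\fint_{[0,1)^2}(\partial_2-\partial_1^2)g=0$ for every smooth periodic $g$, this gives $(\frac{\partial}{\partial y_2}-\frac{\partial^2}{\partial y_1^2})q+P=\fint_{[0,1)^2}f$, which uniquely determines $P=\fint_{[0,1)^2}f-(\frac{\partial}{\partial y_2}-\frac{\partial^2}{\partial y_1^2})q$ and immediately yields $\deg\bigl(P-\fint_{[0,1)^2}f\bigr)\le\deg q-2$ since $y_2$ has parabolic weight $2$ and $y_1$ weight $1$. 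Setting $\tilde U:=U-q$ and $\tilde f:=f-\fint_{[0,1)^2}f$ (so $\deg\tilde f\le\deg f$) then turns \eqref{staronp9} into \eqref{pde2}, i.e. $(D_2-D_1^2)\tilde U=\tilde f$ with $\fint_{[0,1)^2}\tilde U=0$.

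Next I would record the single analytic input: for smooth periodic $h$, the equation $(\partial_2-\partial_1^2)g=h$ on the periodic cell admits a smooth periodic solution with $\fint_{[0,1)^2}g=0$ if and only if $\fint_{[0,1)^2}h=0$, and then $g$ is unique. This is immediate from Fourier series in $x$: the symbol $2\pi\mathrm{i}k_2+4\pi^2k_1^2$ vanishes only at $k=0$ and is therefore bounded away from zero on $\mathbb{Z}^2\setminus\{0\}$, so $\hat g_k:=\hat h_k/(2\pi\mathrm{i}k_2+4\pi^2k_1^2)$ for $k\neq0$, $\hat g_0:=0$, inherits rapid decay from $\hat h_k$; uniqueness is the same computation applied to $h=0$. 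Now, writing $\tilde U(\cdot,y)=\sum_{\n}\tilde U_{\n}y^{\n}$, $\tilde f(\cdot,y)=\sum_{\n}\tilde f_{\n}y^{\n}$ and expanding $D_i=\partial_i+\frac{\partial}{\partial y_i}$, equating coefficients of $y^{\n}$ turns \eqref{pde2} into the hierarchy \eqref{janeq}, whose right-hand side at a given $\n$ involves $\tilde U$ only at the multi-indices $\n+(1,0),\n+(2,0),\n+(0,1)$, all of strictly larger parabolic length $|\cdot|$. I would then build $\tilde U$ by downward induction on $|\n|$: declare $\tilde U_{\n}=0$ for $|\n|>\deg\tilde f$; for $|\n|\le\deg\tilde f$ the right-hand side of \eqref{janeq} is already determined, and its periodic average vanishes because the $\partial_1$-term integrates to zero over the period, the remaining $\tilde U$-terms have zero average by the induction hypothesis, and $\fint_{[0,1)^2}\tilde f_{\n}=0$ by construction of $\tilde f$; so the Fourier fact produces a unique smooth periodic $\tilde U_{\n}$ with $\fint_{[0,1)^2}\tilde U_{\n}=0$. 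The resulting polynomial $\tilde U$ solves \eqref{pde2} — trivially at levels $|\n|>\deg\tilde f$, where both sides vanish — and $(U,P):=(\tilde U+q,P)$ solves \eqref{staronp9}.

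For uniqueness and the degree bound on $U$ I would argue that any polynomial solution $\tilde U$ of \eqref{pde2} satisfies $\deg\tilde U\le\deg\tilde f$: otherwise, at a multi-index $\n$ of maximal length with $\tilde U_{\n}\neq0$, every term on the right of \eqref{janeq} vanishes ($\tilde U$-terms by maximality, $\tilde f_{\n}$ because $|\n|>\deg\tilde f$), so $(\partial_2-\partial_1^2)\tilde U_{\n}=0$ with $\fint_{[0,1)^2}\tilde U_{\n}=0$, forcing $\tilde U_{\n}=0$, a contradiction. Given this bound, the downward induction above is forced level by level (the Fourier fact gives uniqueness of each $\tilde U_{\n}$), hence $\tilde U$, and so $U$, is unique, and $\deg(U-q)=\deg\tilde U\le\deg\tilde f\le\deg f$. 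The argument is essentially bookkeeping; the one point needing care is the inductive structure — verifying that \eqref{janeq} is strictly triangular in $|\n|$ and, crucially, that the solvability condition $\fint_{[0,1)^2}(\text{r.h.s.})=0$ propagates down the hierarchy — while the only genuine PDE ingredient, solvability of the scalar periodic equation, is elementary because its symbol is nonzero off the origin.
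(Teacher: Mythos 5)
Your proposal is correct and follows essentially the same route as the paper: determine $P$ by averaging out the periodic variable, reduce to the zero-average problem \eqref{pde2} for $\tilde U=U-q$, expand in $y$ to get the triangular hierarchy \eqref{janeq}, solve it by downward induction via periodic Fourier series using the zero-average solvability condition, and obtain the degree bound by the same top-level contradiction. The only difference is presentational: you isolate the scalar Fourier solvability statement as an explicit lemma and separate existence from uniqueness more carefully, which the paper leaves implicit.
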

\medskip

\begin{proof}[Proof of Proposition \ref{defstatmodel}]
Component-wise, \eqref{fm2} takes the form of
\begin{equation}\label{fm3}
\left\{\begin{array}{l}
\left(D_{2}-D_{1}^{2}\right)\mathbf{\Pi}_{\beta}+P_{\beta}=\sum_{k\geq 0}\sum_{e_{k}+\beta_1 +...+\beta_{k+1}=\beta}\mathbf{\Pi}_{\beta_{1}}\cdots\mathbf{\Pi}_{\beta_{k}}D_{1}^{2}\mathbf{\Pi}_{\beta_{k+1}}+\xi\delta_{\beta}^0,\\
\fint_{[0,1)^2}\mathbf{\Pi}_{\beta}(y)=\left\{\begin{array}{ll}
y^{\n} & \text{if }\beta=e_{\n},\\
0 & \text{otherwise.}
\end{array}
\right.
\end{array}
\right.
\end{equation}
As we pointed out in Subsection \ref{subsec:algebraicapproach}, 
for each $\beta$ the r.~h.~s. of the equation is a finite sum in $k$. 
Moreover, the summands only involve multi-indices $\beta_{i}$ of length strictly less 
than the length of $\beta$, for $i=1,...,k+1$. Hence appealing to
Lemma \ref{lem:wp1}, the pair $(\mathbf{\Pi}, P)$ may 
be constructed by induction on the length of $\beta$, and is unique. 
Identity \eqref{jan01} is easily seen to hold by applying $\fint_{[0,1)^2}$
to the first line of \eqref{fm2} and feeding in the last.

\medskip

Let us now turn to the proof of \eqref{pop12}, \eqref{sm11} and \eqref{jan02}. 
We first treat multi-indices $\beta\not=0$ such that $\beta(k)=0$ for all $k\geq 0$.
For such $\beta$'s, the condition $e_{k}+\beta_1 +...+\beta_{k+1}=\beta$	is empty for all $k\geq 0$, and therefore \eqref{fm3} reduces to
		\begin{equation*}
		\left\{\begin{array}{l}
		\left(D_{2}-D_{1}^{2}\right)\mathbf{\Pi}_{\beta}+P_{\beta}=0,\\
		\fint_{[0,1)^2}\mathbf{\Pi}_{\beta}(y)=\left\{\begin{array}{ll}
		y^{\n} & \text{if }\beta=e_{\n},\\
		0 & \text{otherwise.}
		\end{array}
		\right.
		\end{array}
		\right.
		\end{equation*}
By the uniqueness statement in Lemma \ref{lem:wp1}, if $\beta$ is not
purely polynomial, then $\mathbf{\Pi}_{\beta}=0$, whereas if $\beta$ is
purely polynomial we obtain $\mathbf{\Pi}_{e_{\n}}(\cdot,y)=y^{\n}$.
This establishes \eqref{pop12}, \eqref{sm11}, and thus \eqref{jan02}
for this class of multi-indices.
		
		\medskip
		
We now consider all remaining multi-indices. Let us first note that if $\beta$ is of the form 
\eqref{pc1}, the r.~h.~s. of \eqref{fm3} is non-vanishing only for a single $k$ and, 
due to \eqref{sm11} it is a polynomial. By the uniqueness statement in Lemma \ref{lem:wp1}, 
the contribution (which equals its space-time average) is absorbed in $P_\beta$ and 
thus $\mathbf{\Pi}_\beta = 0$; this establishes \eqref{pop12}, 
and trivially \eqref{jan02} for the class of multi-indices of the form \eqref{pc1}. 
For $\beta$ not of this form, we show \eqref{pop12} by induction in the length 
$\sum_{k\ge 0}\beta(k)+\sum_{\bf{n}\not={\bf 0}}\beta(\bf{n})$. 
When the length vanishes, we have $\beta=0$, 
for which all statements follow from Lemma \ref{lem:wp1}.
We consider the r.~h.~s. term in \eqref{fm3}; more precisely let $k\ge 0$ and multi-indices 
$\{\beta_j\}_{j=1,\cdots,k+1}$ be such
		that $e_k+\beta_1 +...+\beta_{k+1}=\beta$. 
The latter relation implies $k +[\beta_1]+...+[\beta_{k+1}]=[\beta]$, cf. \eqref{pop12}. 
Since the length of the $\beta_j$'s is less than the one of $\beta$, we may
apply the induction hypothesis and thus learn that the r.~h.~s. vanishes
unless for all $j=1,...,k+1$, either $[\beta_j]\ge 0$ or $\beta_j$ is purely polynomial, 
in which case $[\beta_j]=-1$. Hence we have $[\beta]\ge 0$ unless all $\beta_j$'s were
purely polynomial, which would imply that $\beta$ is of the form \eqref{pc1}, which we
ruled out. In addition, note that for $\{\beta_j\}_{j=1,\cdots,k+1}$ such
that $e_k+\beta_1 +...+\beta_{k+1}=\beta$ by the induction hypothesis \eqref{jan02}
\begin{equation*}
	\deg\big(\mathbf{\Pi}_{\beta_1} \cdots \mathbf{\Pi}_{\beta_k} D_1^2 \mathbf{\Pi}_{\beta_{k+1}}\big)\leq |\beta_1|_p + ... + |\beta_{k+1}|_p = |\beta|_p,
\end{equation*}
and thus \eqref{jan02} for $\beta$ follows from Lemma \ref{lem:wp1}.
\end{proof}

\medskip

In preparation of Subsection \ref{subsec:structuregroup} we now introduce the notion
of {\bf homogeneity} $|\beta|$ of a multi-index $\beta$,
which will provide a grading of the model space $\mathsf{T}$.
It generalizes the scaled length $|\beta|_s$ that naturally came up in
Subsection \ref{subsec:shifts}, see \eqref{co03bis}. Like $|\beta|_s$ there, 
$|\beta|$ can be motivated by a scaling
argument, which we shall give now. Recall that for $\xi$ we have in mind a noise that
is not only shift invariant in law, but also scale invariant in law\footnote{For the sake of this
discussion we ignore that this rescaling changes the period.}, 
by which we mean that for some exponent $\alpha$ and every scaling factor $\lambda$,
$\tilde\xi(x):=\lambda^{\alpha-2}\xi(\frac{x_1}{\lambda},\frac{x_2}{\lambda^2})$ has
the same law as $\xi$. We note that this is in line\footnote{ up to a logarithmic loss} 
with assuming that realizations of $\xi$ are H\"older continuous (with respect to our parabolic metric) with exponent $\alpha-2$.
This invariance in the law of the driver $\xi$ yields a covariance of the map
$(a,p)\mapsto(u,q)$ formally defined through \eqref{co10}. Indeed, the above rescaling of
the driver has to be accompanied by a scaling of the input data, namely
$\tilde a(v)=a(\frac{v}{\lambda^\alpha})$ for the nonlinearity
and $\tilde p(x)=\lambda^\alpha p(\frac{x_1}{\lambda},\frac{x_2}{\lambda^2})$
for the imposed large-scale behavior, in order for the solution to react 
according to $\tilde u(x)=\lambda^\alpha u(\frac{x_1}{\lambda},\frac{x_2}{\lambda^2})$
(and similarly for $q$). On the level of the variables defining the input data, 
see \eqref{rm5} and \eqref{co13}, this scaling translates into 
$\z_k[\tilde a,\tilde p]=\lambda^{-\alpha k} \z_k[a,p]$
and $\z_{\bf n}[\tilde a,\tilde p] =\lambda^{\alpha-|{\bf n}|}\z_{\bf n}[a,p]$. In view of our
formal definition of $\mathbf{\Pi}_\beta$ as the partial derivative $\frac{1}{\beta!}\prod_{k\ge 0,\n\not=\0}
(\frac{\partial}{\partial \z_k})^{\beta(k)}
(\frac{\partial}{\partial \z_{\bf n}})^{\beta({\bf n})}u$ this yields the scaling property
\begin{align*}
\tilde{\mathbf{\Pi}}_\beta(x)=\lambda^{|\beta|}\mathbf{\Pi}_\beta(\tfrac{x_1}{\lambda},\tfrac{x_2}{\lambda^2}),
\end{align*}
where 
\begin{equation}\label{hom}
|\beta|:=\alpha\sum_{k\geq 0}k\beta(k)+\sum_{\n\neq 0}(|\n|-\alpha)\beta(\n)+\alpha.
\end{equation}
\begin{remark}\label{remadditive}
	$|\cdot|-\alpha$ is additive, therefore
	\begin{equation}\label{hom4}
	\sum_{i=1}^{k+1}|\beta_{i}|=|\sum_{i=1}^{k+1}\beta_{i}|+k\alpha.
	\end{equation} 
	In particular, 
	\begin{equation}\label{jan04}
		|e_k + \beta_1 +...+\beta_{k+1}| = |\beta_1|+...+|\beta_{k+1}|.
	\end{equation}
\end{remark}
\begin{definition}\label{def:hom} We define the \textbf{set of homogeneities}  as $\dum{A}:=\left\{|\beta|\;|\; \beta\text{ is populated}\right\}$.
\end{definition}
We may rewrite \eqref{hom} in the form
\begin{equation}\label{hom7}
|\beta| = \alpha(1 + [\beta]) + \pol{\beta}
\end{equation}
where $[\beta]$ and $|\beta|_p$ were defined in \eqref{pop12} and \eqref{jan02}, respectively. Note that the second line on the r.~h.~s. of \eqref{pop12} may be rephrased as $[\beta] \geq 0$. 
Thus, for $\alpha>0$, all populated multi-indices satisfy\footnote{The fact that all populated multi-indices have positive homogeneity is, like in footnote \ref{foot01}, related to our model space being ``smaller" than the one in \cite{Ha14,Ha16}: An element $\z_\beta$ for $\beta$ not purely polynomial corresponds to elements in Hairer's model space with homogeneity $|\beta|-2$ and $|\beta|$ (a rooted tree and its planted version, respectively). These two elements are related via Hairer's abstract integration map $\mathcal{I}$; here, they correspond to the r.~h.~s. and the l.~h.~s. of \eqref{fm2}, respectively. In the identification $\T \oplus \tilde{\T} \oplus \R$, the copy $\tilde{\T}$ contains the elements of homogeneity $|\cdot|-2$, whereas $\R$ contains elements of homogeneity $0$. Thus, the full set of homogeneities would be given by $\{\,\kappa,\kappa-2\,|\,\kappa\in\mathsf{A}\}\,\cup\,\{0\}$. Again, we refer to \cite[Subsection 5.3]{LOT21} for details.}
\begin{equation}\label{hom1}
	|\beta|>0
\end{equation}
and, in addition,
\begin{equation}\label{hom2}
	| e_{\n}|=|\n|\text{ for all }\n\neq \0.
\end{equation}
Then \eqref{hom7} and \eqref{hom1} imply $\dum{A}= (\alpha\N_{0}+\N_{0})\setminus\{0\}$. In particular, $\dum{A}$ is bounded from below and locally finite, as it is required in \cite[Definition 3.1]{Ha16}.


\subsection{The centered model}\label{subsec:centeredmodel}
\mbox{}

In order to obtain a local description of the solution manifold near a base point 
$x\in\mathbb{R}^2$, we need what is called a \textbf{centered model} $\Pi_x$. Mimicking
the monomials of a Taylor expansion, the centered model $\Pi_{x\beta}$ 
has the property that it vanishes in $x$ at the order given by the homogeneity\footnote{ cf. \eqref{ap1} below} 
$|\beta|$. As in
\cite[Definition 3.3]{Ha16}, it turns out that the centered model can be recovered from
the pre-model in a completely algebraic way, that is, only by a linear transformation of $\T^*$.

\medskip

We construct the centered model in two steps, first building a two-variable object $\mathbf{\Pi}_x$ with the same features as $\mathbf{\Pi}$ but satisfying an anchoring condition at the base point, and then evaluating this object at the diagonal.
\begin{proposition}\label{prop:centmod} For every $x\in\mathbb{R}^2$ there exist
	\begin{itemize}
	\item $\mathbf{\Pi}_x:\R^2 \times \R^2 \to \mathsf{T}^*$ such that for every $\beta$, $\mathbf{\Pi}_{x\beta}(\cdot,y)$ is a polynomial in $y$ 
with periodic coefficients and
	\item $P_x:\R^2 \to \R[[\z_k,\z_\n]]$ such that for every $\beta$, $P_{x\beta}$ is a polynomial 
	\end{itemize}
	which are related by
	\begin{align}
		&\left(D_{2}-D_{1}^{2}\right)\mathbf{\Pi}_{x}+P_{x}=\mathbf{\Pi}^-_{x},\label{bfp01}\\
		&\mathbf{\Pi}^-_{x}
:=\sum_{k\geq 0}\dum{z}_{k}\mathbf{\Pi}_{x}^{k}D_{1}^{2}\mathbf{\Pi}_{x}+\xi\dum{1},\label{bfp02}\\
		&\text{for all }\beta,\  D^{\n}\mathbf{\Pi}_{x\beta}(x,x)=0 \text{ if }|\n|<|\beta|\label{bfp03}
	\end{align}	
	and, in addition, satisfy
	\begin{align}
		&\text{for all }\n\neq \0,\; \mathbf{\Pi}_{x\beta}(\cdot,y)=	(y-x)^{\n},\\
		&\text{for }\beta\text{ not purely polynomial, }\deg \mathbf{\Pi}_{x\beta} <|\beta|,\label{bfp05}\\
		&\text{for }\beta\text{ populated and not purely polynomial, }\deg P_{x\beta}  <|\beta|-2.\label{bfp06}
	\end{align}
	\begin{proof}
	Let us first be more specific about \eqref{bfp05}: actually, we will show that
	\begin{equation}\label{jan03}
		\text{for }\beta\text{ not purely polynomial, }\mathbf{\Pi}_{x\beta} = \tilde{\mathbf{\Pi}}_{x\beta} + \mbox{polynomial of degree}<|\beta|,
	\end{equation}
	where $\tilde{\mathbf{\Pi}}_{x\beta}$ is a polynomial of degree $\leq|\beta| - \alpha$ with periodic coefficients.
	
	\medskip
	
	The existence of $(\mathbf{\Pi}_{x},P_{x})$ relies on Lemma \ref{lem:wp1} and the constructive algorithm that we proceed to describe. We consider the $\beta$-component of \eqref{bfp01}, i.~e.
	\begin{equation}\label{cco1}
	\left(D_{2}-D_{1}^{2}\right)\mathbf{\Pi}_{x\beta}+P_{x\beta}=\mathbf{\Pi}^-_{x\beta}.
	\end{equation}
	Let us begin with the purely polynomial multi-indices, say $\beta=e_{\m}$, for which \eqref{cco1} reduces to $\left(D_{2}-D_{1}^{2}\right)\mathbf{\Pi}_{xe_{\m}}+P_{xe_{\m}}=0$. For any constants $\{\pi_{xe_{\m}}^{(\n)}\}_{|\n|<|\m|}\subset\R$, the pair
	\begin{align}\label{cco3}
	&(\mathbf{\Pi}_{x e_\m}(\cdot,y),P_{x e_\m}(y))\nonumber \\
	& \quad:= \Big(\mathbf{\Pi}_{e_\m}(\cdot,y) + \sum_{|\n|<|\m|}\pi_{x e_\m}^{(\n)}y^\n, P_{e_\m}(y) - \sum_{|\n|<|\m|}\pi_{x e_\m}^{(\n)} \Big(\frac{\partial}{\partial y_{2}}-\frac{\partial^{2}}{\partial y_{1}^{2}}\Big) y^\n \Big)
	\end{align}	
	clearly solves \eqref{cco1}
and there is a unique  choice of constants such that, 
in addition, $\mathbf{\Pi}_{x e_\m}(\cdot,y)$ $=$ $(y-x)^\m$; 
this choice is given by the binomial formula, i.~e.\footnote{ Here ${\bf m}<{\bf n}$ means (${\bf m}\le{\bf n}$ and ${\bf m}\not={\bf n}$),
		where ${\bf m}\le{\bf n}$ means ($m_1\le n_1$ and $m_2\le n_2$)}
	\begin{align}\label{fm30}
	\pi_{xe_{\bf m}}^{({\bf n})}=\left\{\begin{array}{cl}
	{\tbinom{\bf m}{\bf n}}(-x)^{{\bf m}-{\bf n}}&\mbox{if} \;{\bf n}<{\bf m}\\
	0&\mbox{otherwise}
	\end{array}\right\}\quad\mbox{for}\;{\bf m}\not=\0, {\bf n}.
	\end{align}
		
	\medskip
	
	We now turn to not purely polynomial populated multi-indices, proceeding by induction in the length $\sum_{k\ge0}\beta(k) + \sum_{{\bf n}\not=\0}\beta(\n)$ of the multi-index $\beta$. We start with $\beta=0$ and note that $|0|=\alpha$, cf. \eqref{hom}. By \eqref{fm3}, for any $\{\pi_{x0}^{(\n)}\}_{|\n|<\alpha}\subset\R$, the pair
	\begin{equation}\label{bc01}
	(\mathbf{\Pi}_{x0}(\cdot, y),P_{x0}(y)):=\Big(\mathbf{\Pi}_{0}(\cdot,y)+\sum_{|\n|<\alpha}\pi_{x0}^{(\n)}y^{\n},P_{0}(y)-\sum_{|\n|<\alpha}\pi_{x0}^{(\n)}\Big(\frac{\partial}{\partial y_{2}}-\frac{\partial^{2}}{\partial y_{1}^{2}}\Big)y^{\n}\Big)
	\end{equation}
solves \eqref{cco1}, and there is a unique choice of $\{\pi_{x0}^{(\n)}\}_{|\n|<\alpha}$ 
such that $D^\n\mathbf{\Pi}_{x0}(x,x) = 0$ for all $|\n|<\alpha$.  
Note that by \eqref{jan02}
$\deg\mathbf{\Pi}_{0}=0$, 
so \eqref{jan03} is satisfied. Furthermore, by \eqref{jan01} $P_0 = \fint_{[0,1)^2}\xi$, which vanishes by assumption, and thus \eqref{bfp06} holds.
		
	\medskip	
	
Now suppose that $\beta$ is populated, not purely polynomial and $\beta\neq 0$, 
and assume as induction hypothesis that all $\mathbf{\Pi}_{x \beta'}$ and 
$P_{x\beta'}$, where $\beta'$ is of length strictly smaller than the one of $\beta$, 
have already been constructed and are such that \eqref{bfp06} and \eqref{jan03} are satisfied. 
In particular $\mathbf{\Pi}^{-}_{x\beta}$, cf.~\eqref{bfp02}, is already constructed, 
so that we may appeal to Lemma \ref{lem:wp1} to obtain a solution 
$(\tilde{\mathbf{\Pi}}_{x\beta},\tilde{P}_{x\beta})$ to the problem
	\begin{equation}\label{cco4}
	\left\{\begin{array}{ll}
	(D_{2}-D_{1}^{2})\tilde{\mathbf{\Pi}}_{x\beta}+\tilde{P}_{x\beta}=\mathbf{\Pi}^-_{x\beta},\\
	\fint_{[0,1)^2}\tilde{\mathbf{\Pi}}_{x\beta}=0.
	\end{array}\right.
	\end{equation}
	Let us study some properties of $\mathbf{\Pi}_{x\beta}^-$. We fix $k\geq 0$ and $\{\beta_j\}_{j=1,...,k+1}$ such that $e_k + \beta_1 +... + \beta_{k+1}=\beta$ on the r.~h.~s. of \eqref{bfp02}. By the induction hypothesis \eqref{jan03}, we see that for $k\geq 1$
	\begin{equation}\label{jan05}
		\mathbf{\Pi}_{x\beta_1}\cdots \mathbf{\Pi}_{x\beta_k} = U + \mbox{polynomial of degree}<|\beta_1|+...+|\beta_k|,
	\end{equation}
where $U$ is a polynomial with periodic coefficients with  
$\deg U\leq|\beta_1|+...+|\beta_k| - \alpha$. In addition, for $k\geq 0$, \eqref{jan03} and Leibniz' rule imply the decomposition
	\begin{equation}\label{jan06}
		D_1^2 \mathbf{\Pi}_{x\beta_{k+1}} = U_2 + U_1 + U_0 + \mbox{polynomial of degree}<|\beta_{k+1}|-2,
	\end{equation}
	where
	\begin{itemize}
		\item $U_2$ is a polynomial, the coefficients of which are second spatial derivatives of periodic functions, and $\deg U_2 \leq |\beta_{k+1}|-\alpha$;
		\item $U_1$ is a polynomial, the coefficients of which are first spatial derivatives of periodic functions, and $\deg U_1 \leq |\beta_{k+1}|-\alpha-1$; and
		\item $U_0$ is a polynomial, the coefficients of which are periodic functions, and $\deg U_0 \leq |\beta_{k+1}|-\alpha-2$.
	\end{itemize}
We now focus on the case $k\geq 1$ (the case $k =0$ is much simpler, and does not require multiplying the contributions from \eqref{jan05} and \eqref{jan06}. Note that since\footnote{ The case $\alpha>2$ follows from similar arguments, but we omit it.} $\alpha<2$,
the highest-degree contribution of the product of \eqref{jan05} 
and \eqref{jan06} is obtained by multiplying the polynomial in \eqref{jan05} with $U_2$, 
and thus by \eqref{jan04} we have $\deg \mathbf{\Pi}_{x\beta}^- < |\beta|-\alpha$. 
However, this contribution vanishes under $\fint_{[0,1)^2}$, since its periodic part 
is the derivative of a periodic function. Since $\alpha>1$, the second highest degree 
contribution is obtained multiplying the polynomials in \eqref{jan05} and \eqref{jan06}, 
and by \eqref{jan04} it is of degree $<|\beta|-2$. This implies that 
$\deg \fint_{[0,1)^2}\mathbf{\Pi}_{x\beta}^-<|\beta|-2$. 
Then by Lemma \ref{lem:wp1} all of this implies for the solution of \eqref{cco4}
	\begin{equation}\label{nv3}
	\deg \tilde{\mathbf{\Pi}}_{x\beta}\leq |\beta|-\alpha\;\mbox{and}\; \deg \tilde{P}_{x\beta} < |\beta|-2.
	\end{equation}
	Now for any $\{\pi_{x\beta}^{(\n)}\}_{|\n|<|\beta|}$ the re-centered
	\begin{equation}\label{fm10}
	(\mathbf{\Pi}_{x\beta}(\cdot,y),P_{x\beta}(y)):=\Big(\tilde{\mathbf{\Pi}}_{x\beta}(\cdot,y)+\sum_{|\n|<|\beta|}\pi_{x\beta}^{(\n)}y^{\n},\ \tilde{P}_{x\beta}(y)-\sum_{|\n|<|\beta|}\pi_{x\beta}^{(\n)}\Big(\frac{\partial}{\partial y_{2}}-\frac{\partial^{2}}{\partial y_{1}^{2}}\Big)y^{\n}\Big)
	\end{equation}
	still solves \eqref{cco1}. Obviously, there is a unique choice such that also \eqref{bfp03} holds. By \eqref{nv3} and \eqref{fm10}, \eqref{bfp06} and \eqref{jan03} are satisfied.
	
	\medskip
	
	Finally, suppose that $\beta$ is not populated. As for the pre-model, 
in this case $\mathbf{\Pi}_{x\beta}^-$ is either $0$ or a polynomial. 
Thus, $\mathbf{\Pi}_{x\beta}= 0$ and $P_{x\beta} = \mathbf{\Pi}_{x\beta}^-$ 
solve \eqref{cco1} and satisfy \eqref{bfp03} and \eqref{jan03}.
	\end{proof}
\end{proposition}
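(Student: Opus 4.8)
The plan is to prove Proposition \ref{prop:centmod} by closely following the inductive construction already used for Proposition \ref{defstatmodel}, but now threading through the anchoring condition \eqref{bfp03} via a polynomial re-centering at the base point $x$. The key observation is that \eqref{bfp01} and the pre-model equation \eqref{fm2} differ only by the freedom to add a polynomial $\sum_\n \pi^{(\n)}_{x\beta} y^\n$ to each component $\mathbf{\Pi}_{\beta}$ (compensated in $P_\beta$ by the caloric operator $\frac{\partial}{\partial y_2} - \frac{\partial^2}{\partial y_1^2}$ applied to it); this is exactly the non-uniqueness one gains by replacing the average normalization of Lemma \ref{lem:wp1} with the pointwise normalization \eqref{bfp03}. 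So at each stage I would: (i) peel off the purely polynomial multi-indices $\beta = e_\m$, where \eqref{cco1} is homogeneous and the binomial formula \eqref{fm30} forces $\mathbf{\Pi}_{xe_\m}(\cdot,y) = (y-x)^\m$; (ii) handle $\beta$ not populated, where $\mathbf{\Pi}^-_{x\beta}$ is a polynomial (by Remark \ref{rem:new}) and can be fully absorbed into $P_{x\beta}$, giving $\mathbf{\Pi}_{x\beta}=0$; (iii) treat the remaining populated, not purely polynomial $\beta$ by induction on the length $\sum_k \beta(k) + \sum_\n \beta(\n)$.

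For step (iii), the skeleton is: invoke the induction hypothesis so that $\mathbf{\Pi}^-_{x\beta}$ is already defined via \eqref{bfp02}; apply Lemma \ref{lem:wp1} to get $(\tilde{\mathbf{\Pi}}_{x\beta}, \tilde P_{x\beta})$ solving \eqref{cco4} with vanishing periodic average; then re-center as in \eqref{fm10} with the unique choice of coefficients $\{\pi^{(\n)}_{x\beta}\}_{|\n|<|\beta|}$ enforcing $D^\n \mathbf{\Pi}_{x\beta}(x,x)=0$ for $|\n|<|\beta|$. Uniqueness of this choice is immediate because the map from coefficients to the values $\{D^\n \mathbf{\Pi}_{x\beta}(x,x)\}_{|\n|<|\beta|}$ is triangular with nonzero diagonal (evaluating $\partial^\n_y y^{\m}$ at $y=x$). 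The base case $\beta = 0$ is where I'd record that $|0| = \alpha$, $\deg \mathbf{\Pi}_0 = 0$ by \eqref{jan02}, and $P_0 = \fint \xi = 0$ by hypothesis, so \eqref{bfp06} holds trivially there.

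The genuine work — and the main obstacle — is the degree bookkeeping that upgrades the crude bound $\deg \mathbf{\Pi}_{x\beta} \le |\beta|_p$ inherited from Proposition \ref{defstatmodel} to the sharp claim \eqref{jan03}: that $\mathbf{\Pi}_{x\beta}$ splits as a polynomial with periodic coefficients of degree $\le |\beta| - \alpha$ plus a pure polynomial of degree $< |\beta|$, and correspondingly $\deg P_{x\beta} < |\beta| - 2$. This forces a careful analysis of $\mathbf{\Pi}^-_{x\beta}$. For each $(k, \beta_1, \dots, \beta_{k+1})$ with $e_k + \beta_1 + \cdots + \beta_{k+1} = \beta$, I would factor the product $\mathbf{\Pi}_{x\beta_1} \cdots \mathbf{\Pi}_{x\beta_k}$ as in \eqref{jan05} (periodic part of degree $\le \sum |\beta_i| - \alpha$, plus pure polynomial of degree $< \sum |\beta_i|$), and expand $D_1^2 \mathbf{\Pi}_{x\beta_{k+1}}$ via Leibniz into the pieces $U_2, U_1, U_0$ of \eqref{jan06} according to how many spatial derivatives land on the periodic part. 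The crucial point, using $1 < \alpha < 2$ together with the additivity \eqref{jan04}: the top-degree term of the product, namely (pure polynomial)$\times U_2$, has degree $< |\beta| - \alpha$ but its periodic part is a derivative of a periodic function, hence annihilated by $\fint_{[0,1)^2}$; and the next term, (pure polynomial)$\times$(pure polynomial), has degree $< |\beta| - 2$. Thus $\deg \fint \mathbf{\Pi}^-_{x\beta} < |\beta| - 2$, and feeding this into the two degree estimates of Lemma \ref{lem:wp1} yields \eqref{nv3}, from which \eqref{jan03} and \eqref{bfp06} follow after the re-centering \eqref{fm10}. The case $k = 0$ needs a separate, simpler remark since there is no product to form. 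I would flag that the restriction $\alpha \in (1,2)$ enters precisely here — both inequalities ($\alpha < 2$ to identify the top term as a periodic derivative, $\alpha > 1$ to push the second term below $|\beta| - 2$) are used — as is already anticipated in the footnote to Proposition \ref{defstatmodel}.
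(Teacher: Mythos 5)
Your proposal is correct and follows essentially the same route as the paper's own proof: the same split into purely polynomial, non-populated, and populated non-purely-polynomial indices, the same induction on length with Lemma \ref{lem:wp1} and the re-centering \eqref{fm10}, and the same degree bookkeeping via \eqref{jan05}--\eqref{jan06} using both $\alpha<2$ (top term is a periodic derivative, killed by the average) and $\alpha>1$ (next term has degree $<|\beta|-2$) to reach \eqref{nv3} and hence \eqref{jan03}, \eqref{bfp06}.
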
 
\begin{definition}
	Let $\mathbf{\Pi}_x$ be constructed by Proposition \ref{prop:centmod}, and let $\Pi_x :\R^2 \to \mathsf{T}^*$ be given by
	\begin{equation*}
		\Pi_x(y) := \mathbf{\Pi}_x(y,y).
	\end{equation*}
	We call $\{\Pi_x\}_x$ \textbf{centered model}.
\end{definition}
Our goal now is to give a characterization of $\Pi_x$. For this, we will use the following uniqueness result:

\begin{lemma}\label{lem:uni}
Let $x\in\mathbb{R}^2$, $k\in \N$ and the function $g$ be given. 
There exists at most one pair $(u,p)$ 
of (smooth function,polynomial) with
\begin{equation}\label{jan08}
\left\{\begin{array}{l}
(\partial_2 - \partial_1^2)u + p = g,\\
\partial^\n u(x) = 0\;\mbox{for all }|\n|\leq k,\\
			\limsup_{|y-x|\to \infty} 
\frac{|u(y)|}{|y-x|^{\eta}}<\infty\;\mbox{for some }\eta< k+1,\\
			\deg p \leq k-2.
		\end{array}\right.
	\end{equation}
\end{lemma}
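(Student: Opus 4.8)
The plan is to reduce to a statement about the difference of two putative solutions and show it vanishes. Suppose $(u,p)$ and $(u',p')$ both satisfy \eqref{jan08} for the same data $(x,k,g)$. Set $v := u - u'$ and $r := p - p'$. Subtracting the two copies of the first line, $v$ and $r$ satisfy $(\partial_2 - \partial_1^2)v + r = 0$; moreover $\partial^\n v(x) = 0$ for all $|\n|\le k$, the growth bound $\limsup_{|y-x|\to\infty}|v(y)|/|y-x|^\eta < \infty$ holds for some $\eta < k+1$ (the max of the two exponents, still $<k+1$), and $\deg r \le k-2$. So it suffices to prove: the only such pair $(v,r)$ is $(0,0)$.

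First I would dispose of $r$. Applying a caloric-type argument: the polynomial $r$ of degree $\le k-2$ is ``caloric data'' in the sense that $(\partial_2-\partial_1^2)v = -r$. One can explicitly construct a particular polynomial solution $w$ of $(\partial_2-\partial_1^2)w = -r$ with $\deg w \le \deg r + 2 \le k$ and $\partial^\n w(x) = 0$ for $|\n|\le k$ — concretely by expanding $r$ in powers of $(y-x)$ and integrating the lowest-order terms in $x_2$ (or $x_1$) appropriately, which raises the parabolic degree by at most $2$; the normalization at $x$ is then achieved by subtracting a polynomial of degree $\le k$, which does not change $(\partial_2-\partial_1^2)w$ beyond adding a caloric polynomial — and in fact one just takes the Taylor polynomial of $w$ at $x$ of order $k$ and subtracts it, noting the correction is caloric of degree $\le k$ hence harmless once we already know $v-w$ is caloric. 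Then $v - w$ solves the homogeneous equation $(\partial_2-\partial_1^2)(v-w) = 0$, vanishes to order $k$ at $x$ together with all derivatives $\le k$, and still satisfies the same polynomial growth bound with exponent $\eta<k+1$ (since $w$ is a polynomial of degree $\le k < k+1$). Thus it remains to prove a Liouville-type statement: a global caloric function on $\R^2$ (with respect to $\partial_2 - \partial_1^2$) with parabolic growth strictly less than $k+1$ and vanishing derivatives up to order $k$ at a point must be zero.

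For the Liouville step I would argue as follows. A global solution of $(\partial_2-\partial_1^2)h=0$ on $\R^2$ with polynomial growth is a polynomial — this is the standard caloric Liouville theorem (provable by interior parabolic estimates: rescaling $h_\lambda(y) := \lambda^{-\eta} h(x + \lambda(y-x))$ and letting $\lambda\to\infty$ one controls all derivatives, forcing derivatives of sufficiently high parabolic order to vanish). Once $h := v-w$ is known to be a polynomial, its parabolic degree is at most $\eta < k+1$, hence $\deg h \le k$; but all its derivatives of order $\le k$ vanish at $x$, so $h \equiv 0$. Therefore $v = w$, which is a polynomial of degree $\le k$ all of whose derivatives of order $\le k$ vanish at $x$, so $v \equiv 0$, and then $r = -(\partial_2-\partial_1^2)v = 0$. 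This gives $u = u'$ and $p = p'$.

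The main obstacle is making the caloric Liouville theorem and the particular-solution construction genuinely precise in the parabolic (anisotropic) scaling: one must be careful that ``degree'' everywhere means the parabolic degree $|\n| = n_1 + 2n_2$, that solving $(\partial_2 - \partial_1^2)w = -r$ raises this degree by exactly $2$ (time counts double, but $\partial_1^2$ also drops degree by $2$, so the bookkeeping is consistent), and that the interior estimate used in the Liouville argument is the parabolic one adapted to $\partial_2 - \partial_1^2$ rather than the isotropic Laplacian. Given the regularity in play — $g$ need only be such that a solution exists, and smoothness of $u$ is assumed — no further regularity input is needed; the argument is entirely a uniqueness/Liouville argument and does not invoke anything from the earlier sections beyond the conventions on parabolic degree.
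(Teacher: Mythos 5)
There is a genuine flaw in your middle step. You claim to construct a polynomial particular solution $w$ of $(\partial_2-\partial_1^2)w=-r$ with \emph{both} parabolic degree $\deg w\le \deg r+2\le k$ \emph{and} $\partial^{\mathbf{n}}w(x)=0$ for all $|\mathbf{n}|\le k$. Such a $w$ cannot exist unless $r=0$: the polynomials of parabolic degree $\le k$ are exactly the span of the monomials $(y-x)^{\mathbf{n}}$ with $|\mathbf{n}|=n_1+2n_2\le k$, so a polynomial of degree $\le k$ all of whose derivatives $\partial^{\mathbf{n}}$, $|\mathbf{n}|\le k$, vanish at $x$ is identically zero, and then $(\partial_2-\partial_1^2)w=0\ne -r$. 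Your attempted repair does not help: since $\deg w\le k$, "the Taylor polynomial of $w$ at $x$ of order $k$" is $w$ itself, and in any case the subtracted polynomial is not caloric in general, so subtracting it does change $(\partial_2-\partial_1^2)w$. The circularity surfaces in your own final sentence: "$v=w$, which is a polynomial of degree $\le k$ all of whose derivatives of order $\le k$ vanish at $x$, so $v\equiv 0$" -- those two properties already force $w\equiv 0$, i.e. $r=0$, which is precisely what was to be proven.

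The strategy is salvageable with a small rearrangement: drop the normalization of $w$ entirely. Your iterated time-integration does produce \emph{some} polynomial $w$ with $(\partial_2-\partial_1^2)w=-r$ and $\deg w\le\deg r+2\le k$; then $v-w$ is globally caloric with parabolic growth exponent $<k+1$, the Liouville principle (your rescaling/interior-estimate argument, or the reference the paper uses) makes $v-w$ a polynomial of parabolic degree $\le k$, hence $v=(v-w)+w$ is itself a polynomial of parabolic degree $\le k$; now the hypothesis $\partial^{\mathbf{n}}v(x)=0$ for $|\mathbf{n}|\le k$ -- a property of $v$, not of $v-w$ -- gives $v\equiv 0$, and then $r=-(\partial_2-\partial_1^2)v=0$. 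For comparison, the paper avoids the particular solution altogether: it first determines $p$ directly by applying $\partial^{\mathbf{n}}$ with $|\mathbf{n}|\le k-2$ to the equation and evaluating at $x$; the terms involving $u$ are derivatives of parabolic order $\le k$ and vanish there, so $\partial^{\mathbf{n}}p(x)=\partial^{\mathbf{n}}g(x)$, which together with $\deg p\le k-2$ pins down $p$. Then the difference of two solutions is exactly caloric, and the Liouville-plus-vanishing-derivatives step concludes. That route is shorter and sidesteps the bookkeeping that tripped you up.
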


\begin{proof}
	We apply $\partial^\n$ with $|\n|\leq k-2$ to the equation and evaluate at $x$, 
obtaining $\partial^\n p(x) = \partial^\n g(x)$. Thus, by the fourth item, $p$ is determined. 
Consider now two solutions $u_1$, $u_2$ and let $w=u_1-u_2$. Then $(\partial_2-\partial_1^2) w = 0$ and $\limsup_{|y-x|\to\infty}\frac{|w(y)|}{|y-x|^{\eta}}<\infty$. 
By a standard Liouville principle, cf. e.~g. \cite[Exercise 8.4.6]{Kry96}, $w$ is a 
polynomial of degree $\leq$ $\eta$, i.~e. $\leq k$. 
Since all its derivatives at $x$ up to order $k$ are $0$ by the second item, this implies $w=0$.
\end{proof}
\begin{corollary}
$\Pi_x:\R^2 \to \T^*$ is the unique solution of
\begin{equation}\label{fm4}
\left\{ \begin{array}{l}
\left(\partial_{2}-\partial_{1}^{2}\right)\Pi_{x\beta}=\Pi^-_{x\beta}
\text{ for all } \beta\text{ with }[\beta]\geq 0\\
\text{where }\Pi^-_{x\beta}
:=(\sum_{k\geq 0}\dum{z}_{k}\Pi_{x}^{k}\partial_{1}^{2}\Pi_{x}+\xi\dum{1})_\beta,\\
\Pi_{x e_\n}(y)= (y-x)^{\n}\text{ for all }\n\neq\0,\\ 
\partial^{\n}\Pi_{x\beta}(x)=0\text{ provided }|\n|<|\beta|,\\
\limsup_{|y-x|\to\infty}\frac{|\Pi_{x\beta}(y)|}{|y-x|^{|\beta|}}<\infty.
\end{array}\right.
\end{equation}
\end{corollary}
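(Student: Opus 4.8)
The plan is to read off the statement from two facts already established: the construction of the two‑variable pair $(\mathbf{\Pi}_x,P_x)$ in Proposition~\ref{prop:centmod} for existence, and the componentwise uniqueness Lemma~\ref{lem:uni} for uniqueness, the latter applied inductively in the length $\sum_k\beta(k)+\sum_{\n}\beta(\n)$ of the multi‑index $\beta$. The elementary bridge between the two‑ and one‑variable pictures is that, along the diagonal, $D_i=\partial_i+\partial/\partial y_i$ is precisely the total derivative of $y\mapsto h(y,y)$; iterating and taking components, $(\partial_2-\partial_1^2)\Pi_{x\beta}=\big[(D_2-D_1^2)\mathbf{\Pi}_{x\beta}\big]|_{\mathrm{diag}}$ and, by \eqref{bfp02}, $\Pi^-_{x\beta}(y)=\mathbf{\Pi}^-_{x\beta}(y,y)$. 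Since $\mathbf{\Pi}_x$ is $\mathsf{T}^*$‑valued, so is its diagonal restriction $\Pi_x$.

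For existence I would restrict \eqref{bfp01}--\eqref{bfp02} to the diagonal, obtaining $(\partial_2-\partial_1^2)\Pi_{x\beta}+P_{x\beta}=\Pi^-_{x\beta}$; the first line of \eqref{fm4} thus amounts to the assertion $P_{x\beta}\equiv 0$ whenever $[\beta]\ge 0$, and this is the one point that needs an argument. By \eqref{bfp03} and Leibniz' rule, each summand $\mathbf{\Pi}_{x\beta_1}\cdots D_1^2\mathbf{\Pi}_{x\beta_{k+1}}$ contributing to $\mathbf{\Pi}^-_{x\beta}$ (with $e_k+\beta_1+\cdots+\beta_{k+1}=\beta$) vanishes at $(x,x)$ to order at least $|\beta_1|+\cdots+|\beta_{k+1}|-2$, which by the additivity \eqref{jan04} equals $|\beta|-2$; the $\xi$‑term occurs only for $\beta=0$, for which $|\beta|-2=\alpha-2<0$ makes the claim vacuous. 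Hence $\partial^{\n}\Pi^-_{x\beta}(x)=0$ for $|\n|<|\beta|-2$, and since \eqref{bfp03} also gives $\partial^{\n}(\partial_2-\partial_1^2)\Pi_{x\beta}(x)=0$ for $|\n|<|\beta|-2$, we get $\partial^{\n}P_{x\beta}(x)=0$ for all such $\n$; as $\deg P_{x\beta}<|\beta|-2$ by \eqref{bfp06}, a polynomial whose derivatives of these orders all vanish at $x$ must be identically zero. The remaining lines of \eqref{fm4} are immediate: $\Pi_{xe_{\n}}(y)=\mathbf{\Pi}_{xe_{\n}}(y,y)=(y-x)^{\n}$, the anchoring $\partial^{\n}\Pi_{x\beta}(x)=0$ for $|\n|<|\beta|$ is \eqref{bfp03} on the diagonal, and the growth bound follows from \eqref{jan03}, since the periodic coefficients of $\mathbf{\Pi}_{x\beta}$ are bounded and the two pieces in \eqref{jan03} grow like $|y-x|^{|\beta|-\alpha}$ and $|y-x|^{\deg(\mathrm{poly})}$ respectively, with both exponents strictly below $|\beta|$.

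For uniqueness, suppose $\Pi_x$ and $\Pi'_x$ are $\mathsf{T}^*$‑valued and both satisfy \eqref{fm4}; I would prove $\Pi_{x\beta}=\Pi'_{x\beta}$ for every populated $\beta$ by induction on the length. For a purely polynomial index $\beta=e_{\n}$ both equal $(y-x)^{\n}$. For $[\beta]\ge 0$, the right‑hand side $\Pi^-_{x\beta}$ (second line of \eqref{fm4}) involves only components $\Pi_{x\beta'}$ with $\beta'$ populated and of strictly smaller length, hence agreeing for the two models by the induction hypothesis; writing $g$ for this common value, both pairs $(\Pi_{x\beta},0)$ and $(\Pi'_{x\beta},0)$ satisfy the hypotheses of Lemma~\ref{lem:uni} with that $g$, with $k$ the largest integer $<|\beta|$ (so that $|\n|\le k\Longleftrightarrow|\n|<|\beta|$), with $\eta=|\beta|$, and with $\deg 0\le k-2$ holding trivially. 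Lemma~\ref{lem:uni} then forces $\Pi_{x\beta}=\Pi'_{x\beta}$, and on non‑populated $\beta$ both vanish by definition of $\mathsf{T}^*$.

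The step genuinely requiring care is the existence argument that the remainder $P_{x\beta}$ from Proposition~\ref{prop:centmod} drops out on the diagonal for $[\beta]\ge 0$ --- i.e.\ that the clean $P$‑free form \eqref{fm4} is correct --- which hinges on balancing the order of vanishing of $\Pi^-_{x\beta}$ at $x$ (coming from \eqref{bfp03}, Leibniz, and the additivity \eqref{jan04}) against the degree bound \eqref{bfp06}. One borderline point to dispatch separately is the case in which $|\beta|$ happens to be an integer: then $\eta=|\beta|$ fails to lie strictly below $k+1$, so to keep Lemma~\ref{lem:uni} applicable one either strengthens the growth requirement in \eqref{fm4} to the degree bound $\deg\Pi_{x\beta}<|\beta|$ of Proposition~\ref{prop:centmod}, or restricts to $\alpha$ for which no populated, non‑purely‑polynomial index has integer homogeneity (e.g.\ $\alpha\notin\mathbb{Q}$).
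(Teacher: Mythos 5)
Your proposal is correct and follows essentially the paper's own route: existence by restricting the construction of Proposition~\ref{prop:centmod} to the diagonal and showing $P_{x\beta}\equiv 0$ for $[\beta]\ge 0$ through the same Leibniz/anchoring count (using \eqref{bfp03}, the additivity \eqref{jan04} and the degree bound \eqref{bfp06}), and uniqueness component-wise from Lemma~\ref{lem:uni} by induction in the length of the multi-index. Your closing caveat about integer homogeneities (where $\eta=|\beta|<k+1$ fails in Lemma~\ref{lem:uni}) is a genuine borderline point that the paper's proof passes over silently; it is immaterial in the regime actually used later, since Theorem~\ref{propanalyticbounds} assumes $\alpha\notin\Q$, which is exactly one of the fixes you suggest.
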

\begin{proof}
	From Lemma \ref{lem:uni} and \eqref{bfp01}--\eqref{bfp06}, it only remains to show that $P_{x\beta}=0$ for all $\beta$ with $[\beta]\geq 0$. By \eqref{bfp06}, we restrict to multi-indices with $|\beta|>2$. We note that, from the proof of Lemma \ref{lem:uni}, we have
	\begin{equation*}
		\partial^\n P_{x\beta}(x) = \partial^\n \Pi_{x\beta}^-(x)\text{ for all }|\n|<|\beta|-2.
	\end{equation*}
	Our goal now is to show that the r.~h.~s. vanishes. By Leibniz' rule, it holds
	\begin{equation*}
		\partial^\n \Pi_{x\beta}^-(x) = \sum_{k\geq 0}\sum_{\n_1 +...+\n_{k+1}=\n}
\hspace{-2ex} \tbinom{\n}{\n_1 ... \n_{k+1}} \hspace{-4ex}
\sum_{e_{k}+\beta_1 + ... + \beta_{k+1}=\beta}\hspace{-3ex}
\partial^{\n_1}\Pi_{x \beta_1}(x)\cdots \partial^{\n_k}\Pi_{x \beta_k}(x) 
\partial^{\n_{k+1} + (2,0)}\Pi_{x \beta_{k+1}}(x).
	\end{equation*}
	Let us fix $k \geq 1$ (the case $k=0$ easily follows by similar arguments). 
The summands for which there exists $i\in\{1,...,k\}$ such that $|\beta_i|>|\n_i|$ 
vanish by the fourth item in \eqref{fm4}. We now focus on those summands for which 
$|\beta_i|\leq |\n_i|$ for all $i\in\{1,...,k\}$. In such a case,
	\begin{equation*}
		|\beta_{k+1}| \underset{\eqref{hom4}}{=} |\beta| - \sum_{i=1}^k |\beta_k| \geq |\beta|- \sum_{i=1}^k |\n_k| = |\beta| - |\n| + |\n_{k+1}|.
	\end{equation*} 
	Then, since $|\n|<|\beta|-2$, this yields
	\begin{equation*}
		|\beta_{k+1}|>|\n_{k+1}|+2 = |\n_{k+1} + (2,0)|.
	\end{equation*}
Hence applying the fourth item in \eqref{fm4}, the last factor vanishes.
\end{proof}
The passage from the pre-model $\mathbf{\Pi}$ to the centered model $\Pi_x$ may be encoded in an algebraic operation involving the coefficients $\{\pi_x^{(\n)}\}_\n \subset \mathsf{T}^*$ chosen in the proof of Proposition \ref{prop:centmod}.
\begin{lemma}\label{lem:algebraic}
Let $x\in\R^{2}$, and let $\{\pi_x^{(\n)}\}_\n\in \mathsf{T}^*$ be constructed
as in the proof of Proposition \ref{prop:centmod}. We introduce the linear subspace $V$
	\begin{equation*}
		V := \{\pi \in \R[[\z_k,\z_\n]]\,|\,\pi_\beta \neq 0 \implies [\beta]\geq -1\}
\supset\mathsf{T}^*.
	\end{equation*}
Assume there exists a linear map\footnote{ At this point, the notation $^*$ is not meaningful; it will become clear at the end of Subsection \ref{subsec:structuregroup}, where we will identify $F_x^*$ with the dual of a map.} $F_x^*:V\to \R[[\z_k,\z_\n]]$ with the properties:  
\begin{enumerate}[label = \textit{(\roman*)}]
\item $F_x^*$ satisfies
\begin{equation}\label{finF}
	\text{ for any }\beta,\;(F_x^* - \textnormal{id})_\beta^\gamma = 0\text{ for all but finitely many }\gamma\text{'s }\text{ with }[\gamma]\geq -1;
\end{equation}	
\item $F_{x}^* \mathsf{T}^* \subset \mathsf{T}^*$;
\item $F_x^*$ acts on the coordinates as follows
\begin{align}
F_{x}^*\dum{z}_{k}&=\sum_{l\geq 0}\left(\begin{matrix}k+l\\l\end{matrix}\right)(\pi_{x}^{(\0)})^{l}\dum{z}_{k+l}\text{ for all }k\geq 0,\label{fm17}\\
F_{x}^*\dum{z}_{\n}&=\dum{z}_{\n}+\pi_{x}^{(\n)}\text{ for all }\n\neq \0; \label{fm19}
\end{align}
\item $F_x^*$ is multiplicative\footnote{ Note that $V$ is not a sub-algebra of $\R[[\z_k,\z_\n]]$.}, i.~e.
\begin{equation}\label{cco5}
F_{x}^*\pi_1\cdots\pi_k=(F_{x}^*\pi_1)\cdots (F_{x}^*\pi_k)\mbox{ for all }\pi_1,...,\pi_k \in V\mbox{ with }\pi_1\cdots\pi_k \in V;
\end{equation}
\item $(F_x^*)_{e_\n}^\gamma \neq 0$ for some ${\bf n}\not=\0$ 
$\implies$ $\gamma$ is purely polynomial.
\end{enumerate}

Then the following relation holds:
\begin{equation}\label{fm20bf}
\mathbf{\Pi}_x = F_x^* \mathbf{\Pi} + \pi_x^{(\0)}
\end{equation}
and as a consequence,
\begin{align}
\Pi_{x}(y)&=F_{x}^*\mathbf{\Pi}(y,y)+\pi_{x}^{(\0)}\label{fm20}.
\end{align} 
\end{lemma}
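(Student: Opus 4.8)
The plan is to show that the right-hand side of \eqref{fm20bf}, call it $\Psi_x := F_x^*\mathbf{\Pi} + \pi_x^{(\0)}$, satisfies the system \eqref{bfp01}--\eqref{bfp03} together with the degree bounds \eqref{bfp05}--\eqref{bfp06} that characterize $\mathbf{\Pi}_x$ in Proposition \ref{prop:centmod}; by the uniqueness there (which comes from Lemma \ref{lem:wp1} and the anchoring \eqref{bfp03}), this forces $\Psi_x = \mathbf{\Pi}_x$, and then \eqref{fm20} follows simply by restricting to the diagonal $y=x$. The reason this works is that $F_x^*$ and the shift by $\pi_x^{(\0)}$ together implement precisely the change of nonlinearity $a\mapsto a(\cdot+\pi_x^{(\0)})$ and the change of polynomial data $\z_\n \mapsto \z_\n + \pi_x^{(\n)}$, which by the formal principles \eqref{co14}, \eqref{co12} and the shift-invariance of \eqref{eq} sends a solution of \eqref{co10} to another solution; properties (iii), (iv), (v) are exactly the rigorous incarnations of this.

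First I would verify the PDE \eqref{bfp01}. Apply $F_x^*$ to the pre-model equation \eqref{fm2}; using linearity and the fact that $D_2-D_1^2$ commutes with $F_x^*$ (this needs a word: $F_x^*$ acts on the abstract coordinates, not on the space-time variables $x,y$, so it commutes with the differential operators), one gets $(D_2-D_1^2)F_x^*\mathbf{\Pi} + F_x^* P = F_x^*\mathbf{\Pi}^-$. Since $\pi_x^{(\0)}$ is constant in space-time, adding it does not change the left-hand side, so setting $P_x := F_x^* P$ we obtain $(D_2-D_1^2)\Psi_x + P_x = F_x^*\mathbf{\Pi}^-$. It remains to identify $F_x^*\mathbf{\Pi}^-$ with $\mathbf{\Pi}_x^- := \sum_k \z_k\,\Psi_x^k D_1^2\Psi_x + \xi\mathsf{1}$. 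This is where multiplicativity (iv) enters: $F_x^*(\z_k\mathbf{\Pi}^k D_1^2\mathbf{\Pi}) = (F_x^*\z_k)(F_x^*\mathbf{\Pi})^k F_x^*(D_1^2\mathbf{\Pi})$, then one expands $F_x^*\z_k$ via \eqref{fm17} and recombines the binomial sum — exactly the Taylor-expansion identity $\sum_k \z_k(\pi+\pi_x^{(\0)})^k = \sum_k (F_x^*\z_k)\pi^k$ underlying \eqref{gamma1} — to turn $\sum_k (F_x^*\z_k)(F_x^*\mathbf{\Pi})^k$ into $\sum_k \z_k(F_x^*\mathbf{\Pi}+\pi_x^{(\0)})^k = \sum_k \z_k \Psi_x^k$. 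Here one must be careful that all the products appearing stay inside $V$ so that (iv) is applicable; Remark \ref{rem:new} (the relevant $\beta$'s have $[\beta]=-1$ or are populated) guarantees this, and (ii) ensures $\Psi_x$ stays in $\mathsf{T}^*$. Finally $F_x^*\xi\mathsf{1} = \xi\mathsf{1}$ since $F_x^*$ fixes $\mathsf{1}$ (it is multiplicative and $\mathsf{1}$ is the empty product, or one reads it off (iii)).

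Next I would check the remaining conditions. For the purely polynomial sector, \eqref{fm19} combined with \eqref{sm11} gives $\Psi_{x,e_\n}(\cdot,y) = \mathbf{\Pi}_{e_\n}(\cdot,y) + \pi_{x}^{(\n)}$ plus contributions from $(F_x^*\z_\n)$ evaluated on higher $\mathbf{\Pi}_\beta$ — here property (v) is what keeps $F_x^*\z_\n$ from contaminating non-purely-polynomial components — and by the explicit formula \eqref{fm30} for $\pi_x^{(\n)}$ this sums by the binomial theorem to $(y-x)^\n$, matching \eqref{bfp05}'s purely-polynomial case. For the anchoring \eqref{bfp03}, the point is that in the proof of Proposition \ref{prop:centmod} the coefficients $\pi_x^{(\n)}$ were defined precisely so that $D^\n\mathbf{\Pi}_{x\beta}(x,x)=0$ for $|\n|<|\beta|$; I would argue that $F_x^*\mathbf{\Pi}+\pi_x^{(\0)}$ differs from $\mathbf{\Pi}$ only by adding, in each component $\beta$, polynomials in $y$ of degree $<|\beta|$ (this uses (v), the triangularity implicit in \eqref{fm17}, i.e. $(F_x^*-\mathrm{id})$ lowers $[\cdot]$ hence homogeneity, and the degree bound \eqref{jan02}), so that \eqref{bfp03} holds by the very construction of the $\pi_x^{(\n)}$; likewise the degree bounds \eqref{bfp05}--\eqref{bfp06} are preserved. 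The well-definedness of $F_x^*\mathbf{\Pi}$ (that the sum defining each component $\beta$ is finite) is guaranteed by (i).

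The main obstacle I anticipate is the bookkeeping in the second paragraph: making the multiplicativity manipulation rigorous requires checking at each stage that the finite products of $\mathbf{\Pi}$-components and their spatial derivatives which appear actually lie in the subspace $V$ where (iv) applies (they are not automatically in $\mathsf{T}^*$, only in the larger space of Remark \ref{rem:new}), and that the binomial recombination of \eqref{fm17} is legitimate as an identity of formal power series in the $\z$'s — this is the Taylor-formula computation sketched in the \texttt{ignore}d block after \eqref{co01}, now to be carried out at the level of genuine (not formal) maps using (i). Everything else is essentially a transcription of the heuristics of Section \ref{sec:restrictedmodel} into the rigorous framework, closed off by the uniqueness in Proposition \ref{prop:centmod}.
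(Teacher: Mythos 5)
Your first two steps (applying $F_x^*$ to \eqref{fm2}, using (iv) and the binomial recombination of \eqref{fm17} to identify the nonlinearity, and the treatment of the purely polynomial sector via (v), \eqref{fm19}, \eqref{sm11}) coincide with the paper's computation. The gap is in how you close the argument. You propose to verify the anchoring \eqref{bfp03} for $\Psi_x:=F_x^*\mathbf{\Pi}+\pi_x^{(\0)}$ by claiming that $\Psi_{x\beta}$ differs from $\mathbf{\Pi}_\beta$ only by a polynomial in $y$ of degree $<|\beta|$. That claim is false: $(F_x^*\mathbf{\Pi})_\beta-\mathbf{\Pi}_\beta=\sum_{\gamma\neq\beta}(F_x^*)_\beta^\gamma\,\mathbf{\Pi}_\gamma$ involves the lower-order components $\mathbf{\Pi}_\gamma$, which have genuinely periodic, non-polynomial parts. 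For instance, for $\beta=e_1$ the exponential formula produces the term $\pi_{x0}^{(\0)}\mathbf{\Pi}_{e_0}$, and $\mathbf{\Pi}_{e_0}$ is the mean-zero periodic solution of $(D_2-D_1^2)\mathbf{\Pi}_{e_0}+P_{e_0}=D_1^2\mathbf{\Pi}_0$, not a polynomial. So \eqref{bfp03} cannot be read off in this way; there is no direct route to the pointwise vanishing at $(x,x)$ for $\Psi_x$ short of already knowing $\Psi_x=\mathbf{\Pi}_x$. A second, smaller issue: you invoke ``the uniqueness'' of Proposition \ref{prop:centmod}, but that proposition is only an existence statement; uniqueness of the two-variable object from \eqref{bfp01}--\eqref{bfp03}, \eqref{bfp05} would itself have to be proved (it is true, but it is an extra argument you do not supply).

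The paper avoids both problems by never touching \eqref{bfp03}: it applies $F_x^*$ to the \emph{averaging} condition, the third line of \eqref{fm2}, which by \eqref{fm19} gives $\fint_{[0,1)^2}\Psi_{x\beta}(y)=\sum_{\n}\pi_{x\beta}^{(\n)}y^\n$, and observes that by the very construction of $\mathbf{\Pi}_x$ in Proposition \ref{prop:centmod} (cf. \eqref{bc01}, \eqref{cco4}, \eqref{fm10}) $\fint_{[0,1)^2}\mathbf{\Pi}_{x\beta}$ has the same value. Then, by induction in the length of $\beta$ (so that the right-hand sides $\mathbf{\Pi}^-_{x\beta}$ agree), both pairs solve the same problem \eqref{staronp9} with the same $f$ and the same $q$, and the uniqueness statement of Lemma \ref{lem:wp1} -- whose prescribed datum is precisely the average, not the anchoring -- yields $\Psi_{x\beta}=\mathbf{\Pi}_{x\beta}$. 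If you replace your anchoring/degree-bound verification by this comparison of averages against the constructed object, your argument becomes the paper's proof; as it stands, the step establishing \eqref{bfp03} for $\Psi_x$ does not go through.
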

The argument that such an $F_x^*$ exists
is embedded into the more general study of the structure group in 
Subsection \ref{subsec:structuregroup}, which is a self-contained variant of \cite{LOT21}.
The postulates \textit{(i)} to \textit{(v)} of Lemma \ref{lem:algebraic} slightly differ from the 
properties of \cite[Proposition 5.2]{LOT21}: here $F^*$ is defined on the 
(strictly) larger space $V$. 
The fact that $\T^* \subset V$ is an immediate consequence of \eqref{pop12}; 
the index set of $V$ in addition contains, among others, multi-indices of the form \eqref{pc1}. 
Since in view of Remark \ref{rem:new}, not only $\mathbf{\Pi}$, but also
$P$ and all the summands of  $\mathbf{\Pi}^-$ have values in $V$, 
we may apply $F_x^*$ to both sides of \eqref{fm2}, which is something that under 
\cite[(5.17)]{LOT21} is not possible\footnote{ In \cite{LOTT21}, where the additional polynomial contributions are hidden in the formulation of the model equation, 
it is better not to extend the definition to the larger subspace and estimate 
the multiplicativity defect of $\Pi_x^-$ separately; see e.~g. \cite[(2.63)]{LOTT21} and \cite[Proposition 4.2]{LOTT21} for details.}.

\medskip

\begin{proof}[Proof of Lemma \ref{lem:algebraic}] 
For $\beta$ purely polynomial,
that is, of the form $\beta=e_{\bf m}$ for some ${\bf m}\not={\bf 0}$, 
it follows from \eqref{sm11} and items \textit{(i)} and \textit{(v)} that
$(F_x^*\mathbf{\Pi}+\pi_x^{({\bf 0})})_\beta$ is a polynomial. Hence by 
\eqref{sm11} and \eqref{fm19} 
we have $(F_x^*\mathbf{\Pi}(\cdot,y)+\pi_x^{({\bf 0})})_\beta$ 
$=y^{\bf m}+\sum_{{\bf n}}\pi^{({\bf n})}_{x\beta}y^{\bf n}$. 
It thus follows from \eqref{cco3} that $(F_x^*\mathbf{\Pi}+\pi_x^{({\bf 0})})_\beta$
agrees with $\mathbf{\Pi}_{x\beta}$.

\medskip

For $\beta$ not purely polynomial, we argue
by induction\footnote{the same induction as in the algorithm} in the length of $\beta$. For this we first note that, applying $F_x^*$ to the first two lines in \eqref{fm2},
\begin{align*}
	(D_2 - D_1^2)(F_x^*\mathbf{\Pi} + \pi_x^{(\0)}) + F_x^* P &= F_x^* \big((D_2 - D_1^2)\mathbf{\Pi} + P\big)\\
	&= F_x^* \big(\sum_{k\geq 0} \z_k \mathbf{\Pi}^k D_1^2 \mathbf{\Pi} + \xi\mathsf{1}\big)\\
	&= \sum_{k\geq 0} F_x^*\big(\z_k \mathbf{\Pi}^k D_1^2 \mathbf{\Pi}\big) + \xi\mathsf{1}\\
	&\hspace*{-0.2cm}\underset{\eqref{cco5}}{=} \sum_{k\geq 0} (F_x^* \z_k)(F_x^* \mathbf{\Pi})^k D_1^2 F_x^*\mathbf{\Pi} + \xi\mathsf{1}\\ 
	&\hspace*{-0.2cm}\underset{\eqref{fm17}}{=} \sum_{k\geq 0}\sum_{l\geq 0} \tbinom{k+l}{l}\z_{k+l} (\pi_x^{(\0)})^l (F_x^* \mathbf{\Pi})^k D_1^2 F_x^*\mathbf{\Pi} + \xi\mathsf{1}\\
	&= \sum_{k\geq 0}\z_k (F_x^*\mathbf{\Pi} + \pi_x^{(\0)})^k D_1^2 (F_x^*\mathbf{\Pi} + \pi_x^{(\0)}) + \xi\mathsf{1},
\end{align*}
whereas applying $F_x^*$ to the third line in \eqref{fm2} we obtain
\begin{align*}
	\fint_{[0,1)^2}F_x^*\mathbf{\Pi}(y) = F_x^*\fint_{[0,1)^2}\mathbf{\Pi}(y)\underset{\eqref{fm2}}{=} F_x^* \sum_{\n\neq \0} y^\n \z_\n \underset{\eqref{fm19}}{=} \sum_{\n\neq \0} (\z_\n + \pi_x^{(\n)})y^\n. 
\end{align*}
Note that, as a consequence of \eqref{finF}, for every $\beta$, $(F_x^* P)_\beta = \sum_\gamma (F_x^*)_\beta^\gamma P_\gamma$ is still a polynomial. We start with the base case $\beta=0$. From the above we learn that
\begin{equation*}
	\left\{\begin{array}{l}
		(D_2 - D_1^2)(F_x^*\mathbf{\Pi} + \pi_x^{(\0)})_0  + (F_x^*P)_0= \xi\\
		\fint_{[0,1)^2} (F_x^*\mathbf{\Pi} + \pi_x^{(\0)})_0(y) = \sum_{\n} \pi_{x0}^{(\n)}y^\n.
	\end{array}\right.
\end{equation*}
On the other hand, by \eqref{fm2}, \eqref{cco1} and \eqref{bc01},
\begin{equation*}
	\left\{\begin{array}{l}
		(D_2 - D_1^2)\mathbf{\Pi}_{x0}  + P_{x0}= \xi\\
		\fint_{[0,1)^2} \mathbf{\Pi}_{x0}(y) = \sum_{\n} \pi_{x0}^{(\n)}y^\n.
	\end{array}\right.
\end{equation*}
By the uniqueness statement in Lemma \ref{lem:wp1} with $f = \xi$ and $q(y) = \sum_{\n} \pi_{x0}^{(\n)}y^\n$, $(F_x^*\mathbf{\Pi} + \pi_x^{(\0)})_0=\mathbf{\Pi}_{x0}$, i.~e. \eqref{fm20bf} holds for $\beta = 0$.

\medskip

We now turn to the induction step. Since $\mathbf{\Pi}_{x\beta}^{-}$ depends on $\mathbf{\Pi}_{x\gamma}$ 
only if the length of $\gamma$ is strictly less than the length of $\beta$, 
we have by induction hypothesis \eqref{fm20bf} that the r.~h.~s.~of \eqref{cco4} agrees with 
$(D_2-D_1^2)(F_x^*\mathbf{\Pi}+\pi_x^{({\bf 0})})_\beta + (F_x^*P)_\beta$ above. 
Hence in view of \eqref{fm10}, both $\mathbf{\Pi}_{x\beta}$ and $(F_x^*\mathbf{\Pi}+\pi_x^{({\bf 0})})_\beta$
solve \eqref{staronp9} with $f = \mathbf{\Pi}_{x\beta}^-$ and $q(y)=\sum_{{\bf n}}\pi^{({\bf n})}_{x\beta}y^{\bf n}$; we conclude once more by Lemma \ref{lem:wp1}.
\end{proof}

%
\subsection{The structure group}\label{subsec:structuregroup}
\mbox{}

The existence of endomorphisms of $\T^*$ satisfying \eqref{cco5}, \eqref{fm17} and \eqref{fm19} was established in full generality in \cite{LOT21}, using techniques of Hopf algebras and representation of groups. In this subsection we give a self-contained construction, following the approach for the restricted 
setting of Section \ref{sec:restrictedmodel}.

\medskip

We now assume that we are given general 
\begin{equation}\label{sg14}
\left\{\pi^{(\n)}\right\}_{\n}\subset\dum{T}^*,
\end{equation}
which in line with \eqref{fm10} and \eqref{fm30} satisfy 
\begin{align}
\pi_{\beta}^{(\n)}&\neq 0\implies |\n|<|\beta|,\label{sg9}\\
%
%
\pi_{e_{\bf n}}^{({\bf m})}&=\left\{\begin{array}{cl}
{\tbinom{\bf n}{\bf m}}h^{{\bf n}-{\bf m}}&\mbox{if}\;{\bf m}<{\bf n}\\
0&\mbox{otherwise}
\end{array}\right\}\quad\mbox{for}\; {\bf n}\not=\0, \m\label{sg20}
\end{align}
for some shift vector $h\in\mathbb{R}^2$.
Note that in view of \eqref{hom2}, \eqref{sg9} and \eqref{sg20} are consistent.
Under these assumptions we will construct an endomorphism $\Gamma^*$ satisfying
\begin{align}
&\Gamma^* \dum{z}_{k}=\sum_{l\geq 0}\left(\begin{matrix}k+l\\k\end{matrix}\right)(\pi^{(\0)})^{l}\dum{z}_{k+l}\text{ for all }k\geq 0,\label{sg1}\\
&\Gamma^* \dum{z}_{\n}=\dum{z}_{\n}+\pi^{(\n)}\text{ for all }\n\neq \0.\label{sg2}
\end{align}
Conditions \eqref{sg1} and \eqref{sg2} mimic \eqref{fm17} and \eqref{fm19}, respectively, 
and characterize $\Gamma^*$ on $\R[\dum{z}_{k},\dum{z}_{\n}]$ 
(but not on $\R[[\dum{z}_{k},\dum{z}_{\n}]]$ or $\dum{T}^*$).

\medskip

The next lemma establishes that this is achieved by the exponential-type formula
%
\begin{equation}\label{sg3}
\Gamma^*=\sum_{l\geq 0}\frac{1}{l!}\sum_{\n_{1},...,\n_{l}}\pi^{(\n_{1})}\cdots\pi^{(\n_{l})}D^{(\n_{1})}\cdots D^{(\n_{l})},
\end{equation}
where we recall that $D^{(\0)}$ is formally given by \eqref{der1} and rigorously by \eqref{der2}, 
and where $D^{(\n)}$ is formally given by 
\begin{equation}\label{*p.18}
D^{(\n)}=\partial_{\dum{z}_{\n}}\text{ for all }\n\neq\0,
\end{equation}
 which on the level of coefficients means
\begin{equation}\label{der3}
(D^{(\n)})_{\beta}^{\gamma}=\left\{\begin{array}{ll}
\gamma(\n) & \text{ if }\beta+e_{\n}=\gamma,\\
0 & \text{ otherwise}.
\end{array}\right.
\end{equation}
Note that, as in case of $D^{(\0)}$, $D^{(\n)}$ for $\n\neq \0$ is a derivation, cf. \cite[Example 1.8]{Abe80}, and for a fixed $\beta$ there are finitely many $\gamma$'s (in fact only one) such that $(D^{(\n)})_{\beta}^{\gamma}\neq 0$, and therefore the derivation $D^{(\n)}$ is well-defined as a linear endomorphism of $\R[[\dum{z}_{k},\dum{z}_{\n}]]$. We point out here that the derivations $\{D^{(\n)}\}_{\n}$ commute, i.~e.
\begin{equation}\label{dercom}
D^{(\n)}D^{(\n')}=D^{(\n')}D^{(\n)}\text{ for all }\n,\n'.
\end{equation} 
Indeed, \eqref{dercom} is obvious for $\n,\n'\neq \0$ from \eqref{*p.18}. For $\n'=\0$ and $\n\neq \0$, in view of \eqref{der1}, \eqref{dercom} follows because the derivation $\partial_{\dum{z}_{\n}}$ and the multiplication with $\dum{z}_{k+1}$ commute.

\medskip

\begin{lemma}\label{lemts2} Given 
$\{\pi^{(\n)}\}_\n\subset\mathsf{T}^*$ satisfying \eqref{sg9} and \eqref{sg20},
	\begin{enumerate}[label = \textit{(\roman*)}]
		\item $\Gamma^*|_{V} : V \to \R[[\z_k,\z_\n]]$ is well-defined through \eqref{sg3} and, in addition, it satisfies the finiteness property
		\begin{align}
			\text{for any }\beta,\ (\Gamma^*-{\rm id})_{\beta}^{\gamma}=0\text{ for all but finitely many }\gamma{'s}\text{ with }[\gamma]\geq -1;\label{stat1bis}
		\end{align}
		\item $\Gamma^*|_{\mathsf{T}^*} \in \textnormal{End}(\mathsf{T}^*)$;
		\item $\Gamma^*$ satisfies \eqref{sg1} and \eqref{sg2};
		\item $\Gamma^*$ is strictly triangular with respect to the homogeneity, i.~e.
		\begin{equation}\label{sg5}
			(\Gamma^*-\textnormal{id})_{\beta}^{\gamma}\neq0\,\implies\,|\gamma|<|\beta|;
		\end{equation}
	\item $\Gamma^*$ is multiplicative, i.~e.
	\begin{equation}\label{multgamma}
	\Gamma^*\pi_1\cdots\pi_k=(\Gamma^*\pi_1)\cdots (\Gamma^*\pi_k)\mbox{ for all }\pi_1,...,\pi_k \in V\mbox{ with }\pi_1\cdots\pi_k \in V;
	\end{equation}
	\item for all ${\bf n}\not=\0$
	\begin{align}\label{cw01}
		(\Gamma^*)_{e_{\bf n}}^{\gamma}
		=\left\{\begin{array}{cl}
			{\tbinom{\bf n}{\bf m}}h^{{\bf n}-{\bf m}}&\mbox{if}\;\gamma=e_{\bf m}\;
			\mbox{for some}\;\0\not={\bf m}\le{\bf n},\\
			0&\mbox{otherwise.}
		\end{array}\right.
	\end{align}
	\end{enumerate}
\end{lemma}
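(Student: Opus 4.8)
The plan is to establish the seven items more or less in the order listed, since the later items rely on the finiteness and well-definedness obtained first. The central quantitative tool will be the mapping property encoded in \eqref{sg9}, namely $\pi^{(\n)}_\beta\neq 0\implies|\n|<|\beta|$, together with the fact that each $D^{(\n)}$ lowers homogeneity in a controlled way: from \eqref{der2}--\eqref{der3} one reads off that $(D^{(\n)})^\gamma_\beta\neq0$ forces $|\gamma|=|\beta|-|\n|$ for $\n\neq\0$ (since $\beta=\gamma+e_\n$ and $|\cdot|-\alpha$ is additive, cf. Remark \ref{remadditive}), and $(D^{(\0)})^\gamma_\beta\neq0$ forces $|\gamma|=|\beta|-\alpha$ (because $\beta+e_k=\gamma+e_{k+1}$ shifts one power of $\z_k$ to $\z_{k+1}$, changing homogeneity by $\alpha$). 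Hence a single term $\pi^{(\n_1)}\cdots\pi^{(\n_l)}D^{(\n_1)}\cdots D^{(\n_l)}$, when applied to $\z^\beta$ and expanded in the monomial basis, produces multi-indices $\gamma$ whose homogeneity satisfies $|\gamma|<|\beta|$ as soon as $l\geq1$ (each differentiation strictly decreases homogeneity by at least $\min\{\alpha,1\}>0$ for $\n\neq\0$ by \eqref{hom2} and \eqref{hom1}, and by $\alpha>0$ for $\n=\0$, while the subsequent multiplications by $\pi^{(\n_j)}$ — again using \eqref{sg9} — do not undo this). This simultaneously gives the strict triangularity \eqref{sg5} of item \textit{(iv)} and shows that for fixed $\beta$ only terms with $l\leq C(\beta)$ can contribute, which is the crux of the finiteness claim \eqref{stat1bis} of item \textit{(i)}: once $l$ is bounded, one still must see that for each such $l$ the set of reachable $\gamma$ with $[\gamma]\geq-1$ is finite. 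Here the subtlety flagged in the footnote to \eqref{co03bis} resurfaces — homogeneity alone does not control the $\gamma(\0)$-component — so I would argue instead via the \emph{length}: each $D^{(\n)}$ changes the length of the multi-index by a bounded amount ($-1$ for $\n\neq\0$; by \eqref{der2}, $D^{(\0)}$ preserves length), and each factor $\pi^{(\n_j)}$, when restricted to $V$ (where $[\cdot]\geq-1$), has $\pi^{(\n_j)}_{\beta_j}\neq0\implies|\n_j|<|\beta_j|$, which bounds $|\beta_j|$ from below and hence, on the subspace $[\gamma]\geq-1$ where $|\cdot|$ and length are comparable, bounds the reachable lengths. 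Combining the $l$-bound with the per-step length bound yields that $\{\gamma\mid[\gamma]\geq-1,\ (\Gamma^*-\mathrm{id})^\gamma_\beta\neq0\}$ is finite, which is \eqref{stat1bis}; well-definedness of $\Gamma^*|_V$ as a map into $\R[[\z_k,\z_\n]]$ follows since each matrix entry is a finite sum.

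For item \textit{(ii)}, $\Gamma^*|_{\T^*}\in\mathrm{End}(\T^*)$, I would show that $\Gamma^*$ maps a populated $\z^\beta$ to a combination of populated $\z^\gamma$'s. This is where item \textit{(vi)}, the explicit formula \eqref{cw01} on purely polynomial inputs, enters: one first checks \eqref{cw01} directly. Since $e_\n$ has length $1$, only the $l=0$ and $l=1$ terms of \eqref{sg3} survive (the $l=1$ term via $D^{(\m)}\z_\n=\delta^\n_\m\mathsf{1}$ for $\m\neq\0$ and via $D^{(\0)}\z_\n=0$ since $\z_\n$ is independent of the $\z_k$'s), so $\Gamma^*\z_\n=\z_\n+\sum_\m\pi^{(\m)}(D^{(\m)}\z_\n)=\z_\n+\pi^{(\n)}$, and then \eqref{sg20} gives precisely \eqref{cw01} — in particular $\Gamma^*$ sends purely polynomial inputs to purely polynomial outputs, so $\bar\T^*$ is preserved. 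For a general populated $\beta$ (with $[\beta]\geq0$), one argues that differentiation and multiplication by the $\pi^{(\n_j)}\in\T^*$ keep one inside $\T^*$: this is essentially the same population bookkeeping as in the proof of Proposition \ref{defstatmodel}, using that $[\cdot]$ is ``additive up to $-1$ per integration'' and that $\pi^{(\n_j)}\in\T^*$. Item \textit{(iii)}, that $\Gamma^*$ satisfies \eqref{sg1}--\eqref{sg2}, is a direct computation: \eqref{sg2} was just obtained, and for \eqref{sg1} one applies \eqref{sg3} to $\z_k$, noting $D^{(\n)}\z_k=0$ for $\n\neq\0$ while $D^{(\0)}$ acts by \eqref{der1}; iterating $D^{(\0)}$ on $\z_k$ produces $(k+l)!/k!\cdot\z_{k+l}$ at order $l$ (this is the same computation as in the suppressed derivation of \eqref{gamma1}), and the $1/l!$ together with the combinatorial factor assembles the binomial coefficient — the non-commutation of multiplication by $\pi^{(\0)}$ with $D^{(\0)}$ is harmless here because $\z_k$ depends only on the $\z$-variables that $D^{(\0)}$ moves, so all the $\pi^{(\0)}$ factors can be pulled to the left.

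Item \textit{(v)}, multiplicativity \eqref{multgamma}, is the step I expect to be the main obstacle, precisely because $V$ (and $\T^*$) is not a subalgebra of $\R[[\z_k,\z_\n]]$, so one cannot simply invoke ``$\Gamma^*$ is an algebra morphism.'' The natural route is to prove the stronger statement that the formal object in \eqref{sg3} is multiplicative as an operator on \emph{all} of $\R[[\z_k,\z_\n]]$ — where it is the composition-type exponential built from the \emph{commuting} (by \eqref{dercom}) derivations $D^{(\n)}$ — and then restrict. Concretely, introduce the one-parameter family $\Gamma^*_t$ obtained by replacing $\pi^{(\n)}$ with $t\pi^{(\n)}$; as in \eqref{gamma2} it solves $\frac{d}{dt}\Gamma^*_t=\big(\sum_\n\pi^{(\n)}D^{(\n)}\big)\Gamma^*_t$ with $\Gamma^*_0=\mathrm{id}$ (here one uses \eqref{dercom} to see that the naive term-by-term $t$-differentiation of \eqref{sg3} collapses to this form). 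Multiplicativity then follows from a Gr\"onwall/uniqueness argument: both $t\mapsto\Gamma^*_t(\pi_1\cdots\pi_k)$ and $t\mapsto(\Gamma^*_t\pi_1)\cdots(\Gamma^*_t\pi_k)$ satisfy the same linear ODE — for the product, one differentiates using the Leibniz rule and the fact that each $D^{(\n)}$ is a derivation, so $\frac{d}{dt}\prod_j\Gamma^*_t\pi_j=\sum_i\big(\prod_{j<i}\Gamma^*_t\pi_j\big)\big(\sum_\n\pi^{(\n)}D^{(\n)}\Gamma^*_t\pi_i\big)\big(\prod_{j>i}\Gamma^*_t\pi_j\big)$, and since the $\pi^{(\n)}$ are scalars-times-series one can collect them and recognize $\sum_\n\pi^{(\n)}D^{(\n)}$ acting as a derivation on the product — and both agree at $t=0$. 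Evaluating at $t=1$ gives \eqref{multgamma}. The one thing to be careful about is that all these manipulations take place at the level of formal power series coefficients, each of which is, by item \textit{(i)}, a finite sum, so the ODE and the Leibniz manipulations are legitimate coefficient-wise identities rather than analytic statements; I would make this explicit by noting that for each $\beta$ only finitely many $l$ contribute, reducing everything to a polynomial identity in $t$.
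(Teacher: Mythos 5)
Your homogeneity bookkeeping for the individual operators is incorrect, and the error sits exactly where the lemma is delicate. By \eqref{der3}, $(D^{(\n)})_\beta^\gamma\neq0$ means $\gamma=\beta+e_\n$, hence $|\gamma|=|\beta|+|\n|-\alpha$ (not $|\beta|-|\n|$); in particular for $|\n|\geq2$ a single $D^{(\n)}$ is not triangular in the sense of \eqref{sg5}, and the claim that ``each differentiation strictly decreases homogeneity by at least $\min\{\alpha,1\}$, while the multiplications do not undo this'' is backwards: it is the multiplication by $\pi^{(\n)}$, through \eqref{sg9}, that produces the strict gain $|\beta'|-|\n|>0$ per factor. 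More seriously, no argument of this type can bound $l$ without using $[\gamma]\geq-1$: for $\beta=0$, the string $\n_1=\dots=\n_l=(1,0)$ with $\beta_1=\dots=\beta_{l+1}=0$ makes the entry with $\gamma=l\,e_{(1,0)}$ equal to $(\pi_{0}^{((1,0))})^l$, generically nonzero for every $l$; these $\gamma$ satisfy $|\gamma|=\alpha+l(1-\alpha)<|\beta|$, so triangularity gives no contradiction, but $[\gamma]=-l$. The paper's route --- $|\gamma|_p\geq\sum_k|\n_k|$ from \eqref{sg8}, upgraded to $|\gamma|\geq\sum_k|\n_k|$ using $[\gamma]\geq-1$ and \eqref{hom7}, then $|\beta|\geq\sum_{k=1}^l|\beta_k|\geq l$ by \eqref{hom1} --- is where that hypothesis enters, and your proposal does not use it there. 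Your substitute ``length'' argument rests on the claim that homogeneity and length are comparable on $\{[\gamma]\geq-1\}$, which is false ($\gamma=m\,e_0$ has $|\gamma|=\alpha$ for every $m$); this is precisely the $\gamma(0)$-subtlety you yourself flag. (The per-$l$ finiteness is actually the easy part: finitely many decompositions of $\beta$, finitely many $\n_k$ by \eqref{sg9}, finitely many $\gamma$ per string of $D$'s.)

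Two further steps fail as written. For item \textit{(vi)}, computing $\Gamma^*\z_\n=\z_\n+\pi^{(\n)}$ only determines the column of the matrix indexed by $e_\n$, i.e.\ \eqref{sg2}; but \eqref{cw01} is a statement about the row $(\Gamma^*)_{e_\n}^{\gamma}$, whose substantive content is that no $\gamma$ other than purely polynomial ones feeds the $e_\n$-component. That is \eqref{ao77}, which needs the population argument (the paper's \eqref{cco7}, based on $[\beta]=[\gamma]+\sum_k[\beta_k]+l$ with purely polynomial $\gamma$ treated separately) and does not follow from your computation; moreover your parenthetical ``$\Gamma^*$ sends purely polynomial inputs to purely polynomial outputs'' is false in general, since $\pi^{(\n)}\in\T^*$ need not be purely polynomial --- the true and needed statement is the reverse one. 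For item \textit{(v)}, the ODE you posit is wrong: differentiating \eqref{sg3} in $t$ and using \eqref{dercom} yields $\frac{d}{dt}\Gamma_t^*=\sum_\n\pi^{(\n)}\,\Gamma_t^*\,D^{(\n)}$, with $\Gamma_t^*$ sandwiched between multiplication and derivation (cf.\ \eqref{gamma2}); this is exactly the non-commutativity the paper stresses after \eqref{exp1}. With the corrected ODE your Gr\"onwall/uniqueness scheme becomes circular, since identifying the equation satisfied by $(\Gamma_t^*\pi_1)\cdots(\Gamma_t^*\pi_k)$ already requires multiplicativity of $\Gamma_t^*$; repairing it forces an order-by-order induction in $t$, i.e.\ essentially the paper's re-summation \eqref{komb} combined with the iterated Leibniz rule. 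Items \textit{(ii)} and \textit{(iii)} are in the right spirit, but \textit{(ii)} should be carried out explicitly along the lines just indicated rather than by appeal to Proposition \ref{defstatmodel}.
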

We now give an interpretation of \eqref{cw01}.
Let $\Ppol$ denote the projection onto $\mathsf{\bar T}^*$, i.e. 
\begin{equation*}
(\Ppol\pi)_{\beta}=\left\{\begin{array}{ll}
\pi_{\beta} & \text{ if }\beta=e_\n \text{ for some }\n\neq \0,\\
0 & \text{ otherwise.}
\end{array}\right.
\end{equation*}
By the canonical
identification
$\mathsf{\bar T}\cong \mathbb{R}[y_1,y_2]/\mathbb{R}$ given through \eqref{cw03},
this definition can be rephrased as
\begin{align}\label{cw06}
\Ppol\pi [p]
=\sum_{{\bf n}\not=0}\pi_{e_{\bf n}}\frac{1}{{\bf n}!}
\frac{\partial^{\bf n}p}{\partial y^{\bf n}}(0).
\end{align}
We now argue that \eqref{cw01} takes the basis-free form of
\begin{align}\label{cw04}
\Ppol \Gamma^* \pi [p]=\Ppol \pi[p(\cdot + h)]\quad\mbox{for all}\;
p\in\mathbb{R}[y_1,y_2]\;\mbox{and}\;\pi\in\mathsf{T}^*.
\end{align}
Indeed, with the abbreviation $\tilde p:=p(\cdot+h)$ we have by Taylor's
\begin{align*}
\Ppol\Gamma^*\pi [p]
&
\underset{\eqref{cw06}}{=}\sum_{{\bf n}\not=0}\sum_{\gamma}(\Gamma^*)_{e_{\bf n}}^\gamma\pi_\gamma
\frac{1}{{\bf n}!}\frac{\partial^{\bf n}p}{\partial y^{\bf n}}(0)\\
&\underset{\eqref{cw01}}{=}
\sum_{{\bf n}\not=0}
\sum_{\0\not={\bf m}\le{\bf n}}{\tbinom{\bf n}{\bf m}}h^{\n-\m}\pi_{e_{\bf m}}
\frac{1}{{\bf n}!}\frac{\partial^{\bf n}p}{\partial y^{\bf n}}(0)\\
&=\sum_{{\bf m}\not=0}\pi_{e_{\bf m}}
\frac{1}{{\bf m}!}\frac{\partial^{\bf m}\tilde p}{\partial y^{\bf m}}(0)
\underset{\eqref{cw06}}{=}\Ppol\pi[\tilde p].
\end{align*}
Identity \eqref{cw04} means that the action of 
$\Gamma^*$ on the subspace $\mathsf{\bar T}^*$ is the pull back
of the action of shift on $\mathbb{R}[y_1,y_2]$.
This amounts to the postulate \cite[Assumption 3.20]{Ha16} on the polynomial sector.

\medskip

In the proof of Lemma \ref{lemts2} we will make use of the following property of 
the matrix representation of our derivations.
\begin{lemma}\label{lemder}
	Fix $\beta$, $\gamma$, $l\geq 1$ and $\n_{1},...,\n_{l}$. Then\footnote{ Here we use the convention $|\0| = 0$.}
	\begin{equation}\label{sg8}
	\left(D^{(\n_{1})}\cdots D^{(\n_{l})}\right)_{\beta}^{\gamma}\neq 0\implies[\beta] = [\gamma] + l \;\;\mbox{and}\;\; \pol{\beta} + \sum_{k=1}^l |\n_k| = \pol{\gamma}.
	\end{equation}
\end{lemma}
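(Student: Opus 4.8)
The plan is to prove the two implications in \eqref{sg8} simultaneously by induction on $l\geq 1$, using that each $D^{(\n)}$ is a composition-by-one-derivation whose matrix entries are governed by \eqref{der2} (for $\n=\0$) and \eqref{der3} (for $\n\neq\0$). First I would record the base case $l=1$: for $\n_1=\0$, a nonzero entry $(D^{(\0)})_\beta^\gamma$ forces $\beta+e_k=\gamma+e_{k+1}$ for some $k\geq 0$ by \eqref{der2}; applying the bracket $[\cdot]$, which by its definition in \eqref{pop12} assigns weight $k$ to $e_k$ and weight $-1$ to $e_{\n}$, one gets $[\beta]+k=[\gamma]+(k+1)$, i.e.\ $[\beta]=[\gamma]+1$, and since $e_k,e_{k+1}$ carry no $\z_\n$'s, $\pol\cdot$ is unchanged: $\pol\beta=\pol\gamma$, consistent with $|\0|=0$. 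For $\n_1\neq\0$, \eqref{der3} gives $\beta+e_{\n_1}=\gamma$; then $[\gamma]=[\beta]-1$ because $e_{\n_1}$ contributes $-1$ to the bracket, and $\pol\gamma=\pol\beta+|\n_1|$ since $e_{\n_1}$ contributes $|\n_1|$ to $\pol\cdot$ by \eqref{jan02}. This is exactly \eqref{sg8} with $l=1$.

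For the induction step, suppose the claim holds for $l-1$. If $\bigl(D^{(\n_1)}\cdots D^{(\n_l)}\bigr)_\beta^\gamma\neq 0$, then expanding the matrix product there is an intermediate multi-index $\delta$ with $(D^{(\n_1)})_\beta^\delta\neq 0$ and $\bigl(D^{(\n_2)}\cdots D^{(\n_l)}\bigr)_\delta^\gamma\neq 0$. The base case applied to the first factor gives $[\beta]=[\delta]+1$ and $\pol\beta+|\n_1|=\pol\delta$, while the induction hypothesis applied to the remaining $l-1$ factors gives $[\delta]=[\gamma]+(l-1)$ and $\pol\delta+\sum_{k=2}^l|\n_k|=\pol\gamma$. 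Adding these yields $[\beta]=[\gamma]+l$ and $\pol\beta+\sum_{k=1}^l|\n_k|=\pol\gamma$, completing the induction.

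The only mild subtlety — which I would flag but which is not a genuine obstacle — is bookkeeping the two different conventions for $\n=\0$ versus $\n\neq\0$: the derivation $D^{(\0)}$ acts via \eqref{der2} and does \emph{not} change $\pol\cdot$ (so its bracket increment $+1$ is absorbed on the $[\cdot]$ side alone), while $D^{(\n)}$ for $\n\neq\0$ shifts $\pol\cdot$ by $|\n|$; the statement unifies these using $|\0|=0$, as noted in the footnote. Since the derivations commute \eqref{dercom}, the order of the $\n_k$'s is irrelevant and one may always peel off whichever factor is convenient, so no case distinction on the pattern of zero/nonzero $\n_k$'s is needed beyond the two elementary computations in the base case. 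Thus the proof is a short double-bookkeeping induction with the base case doing all the work.
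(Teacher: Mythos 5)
Your proof is correct and follows essentially the same route as the paper: the base case $l=1$ is read off from \eqref{der2} and \eqref{der3}, and the induction step expands the matrix product through an intermediate multi-index and adds the increments of $[\cdot]$ and $\pol{\cdot}$. The only (immaterial) difference is that you peel off the first factor $D^{(\n_1)}$ while the paper peels off the last one, so no appeal to \eqref{dercom} is needed in either version.
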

\begin{proof}[Proof of Lemma \ref{lemder}]
	We proceed by induction in $l\geq 1$. We start with the base case $l=1$. 
If $\n=\0$, then $(D^{(\0)})_{\beta}^{\gamma}\neq 0$ implies $\beta+e_{k}=\gamma+e_{k+1}$ 
for some $k\geq 0$, cf. \eqref{der2}. Therefore $[\beta]=[\gamma]+1$ and 
$\pol{\beta} = \pol{\gamma}$, as desired. 
Now if $\n\neq\0$ then $(D^{(\n)})_{\beta}^{\gamma}\neq 0$ implies $\beta+e_{\n}=\gamma$, 
cf. \eqref{der3}, and consequently $[\beta]=[\gamma]+1$ and $\pol{\beta} + |\n| = \pol{\gamma}$,
as desired.
	
	\medskip
	
	Now assume that \eqref{sg8} is satisfied for some $l$. We write
	\begin{equation*}
	\left(D^{(\n_{1})}\cdots D^{(\n_{l+1})}\right)_{\beta}^{\gamma}=\sum_{\tilde{\gamma}}\left(D^{(\n_{1})}\cdots D^{(\n_{l})}\right)_{\beta}^{\tilde{\gamma}}\left(D^{(\n_{l+1})}\right)_{\tilde{\gamma}}^{\gamma}.
	\end{equation*}
	From the base case we learn that there has to be a $\tilde{\gamma}$ such that $[\tilde{\gamma}]=[\gamma] +1$ and $\pol{\tilde{\gamma}} + |\n_{l+1}| = \pol{\gamma}$. On the other hand, by the induction hypothesis, $[\beta] = [\tilde{\gamma}] + k$ and $\pol{\beta} + \sum_{k= 1}^l |\n_{k}|=\pol{\tilde{\gamma}}$. The combination of these statements yields \eqref{sg8} for $l+1$.
\end{proof}	

\begin{proof}[Proof of Lemma \ref{lemts2}]
		Throughout the proof we will make use of the matrix representation of \eqref{sg3}: for multi-indices $\beta$, $\gamma$,
		\begin{equation}\label{sg7}
		(\Gamma^* - \textnormal{id})_{\beta}^{\gamma}=\sum_{l\geq 1}\frac{1}{l!}\sum_{\n_{1},...,\n_{l}}\sum_{\beta_{1}+...+\beta_{l+1}=\beta}\pi_{\beta_{1}}^{(\n_{1})}\cdots\pi_{\beta_{l}}^{(\n_{l})}\left(D^{(\n_{1})}\cdots D^{(\n_{l})}\right)_{\beta_{l+1}}^{\gamma}.
		\end{equation}
		In order to prove that $\Gamma^*|_V$ is well-defined, it is enough to verify \eqref{stat1bis} together with
		\begin{align}
			\text{for any }\beta\text{ and any }\gamma\text{ with }[\gamma]\geq -1,\text{ the sum in }\eqref{sg7}\text{ is finite.}\label{stat2bis}
		\end{align}
		We fix multi-indices $\beta$ and $\gamma$ with $[\gamma]\geq -1$ and look at 
the component-wise expression \eqref{sg7}. We apply Lemma \ref{lemder} to 
$(D^{(\n_{1})}\cdots D^{(\n_{l})})_{\beta_{l+1}}^{\gamma}$; from \eqref{sg8} we learn 
that $|\gamma|_p \geq \sum_{k=1}^l |\n_{k}|$ which by \eqref{hom7} and $[\gamma]\geq -1$ implies
		\begin{equation}\label{sg13}
			|\gamma|\geq \sum_{k=1}^l |\n_{k}|.
		\end{equation}  

Now due to $\beta_{1}+...+\beta_{l+1}=\beta$ and with help of \eqref{hom4}
we may rewrite \eqref{sg8} as
		\begin{equation}\label{sg16}
			|\beta|+\sum_{k=1}^l |\n_{k}|=|\gamma|+\sum_{k=1}^l |\beta_{k}|.
		\end{equation}
The combination of \eqref{sg13} and \eqref{sg16} yields $|\beta|\geq\sum_{k=1}^l |\beta_{k}|$.
Since by \eqref{sg14}, we may assume that all $\beta_{k}$ are populated,  
and therefore have homogeneity uniformly bounded away from zero, cf. \eqref{hom1}, 
$l$ is bounded by above in terms of $|\beta|$,
and thus the sum \eqref{sg7} is effectively finite in $l$. 
Now for a fixed $l$ there are finitely many ways of decomposing
$\beta=\beta_{1}+...+\beta_{l+1}$, and for each $\beta_{k}$ there are finitely 
many $\n_{k}$'s such that $|\n_{k}|<|\beta_{k}|$, see \eqref{polyn}. 
In view of \eqref{sg9}, this implies that the whole sum in \eqref{sg7} is finite 
for every $\beta$, which concludes the proof of \eqref{stat2bis}. 
Finally, since for every $\beta_{l+1}$ there are finitely many $\gamma$'s 
such that $\left(D^{(\n_{1})}\cdots D^{(\n_{l})}\right)_{\beta_{l+1}}^{\gamma}\neq 0$, 
we conclude that for every fixed $\beta$ there are finitely many $\gamma$'s 
such that $(\Gamma^*-{\rm id})_{\beta}^{\gamma}\neq 0$, which establishes \eqref{stat1bis} and finishes the proof of item \textit{(i)}.	
		
		\medskip
		
For $\Gamma^*|_{\mathsf{T}^*}\in\textnormal{End}(\dum{T}^*)$ we have to check that
\begin{align}
\text{if $\gamma$ is populated and }(\Gamma^*-{\rm id})_{\beta}^{\gamma}\neq 0\text{ , then
$\beta$ is populated.}\label{sg18}
\end{align}
For later use, we will show this property for each of the summands in \eqref{sg3}, i.~e. for all $l\geq 1$ and $\n_1,...,\n_l$,
\begin{equation}\label{cco7}
	\text{if $\gamma$ is populated and }(\pi^{(\n_{1})}\cdots\pi^{(\n_{l})}D^{(\n_{1})}\cdots D^{(\n_{l})})_\beta^{\gamma}\neq 0\text{ , then
		$\beta$ is populated.}
\end{equation}
For $\gamma$ purely polynomial, say $\gamma = e_\n$, $(\pi^{(\n_{1})}\cdots\pi^{(\n_{l})}D^{(\n_{1})}\cdots D^{(\n_{l})})_\beta^{e_\n}\neq 0$ implies $l=1$ and $\n_1 = \n$, and in this case $(\pi^{(\n)}D^{(\n)})_\beta^{e_\n} = \pi_\beta^{(\n)}$, so $\beta$ is populated, cf. \eqref{sg14}.
If $\gamma$ is not purely polynomial and populated, we expand \eqref{cco7} in form of
\begin{equation*}
	\sum_{\beta_1 + ...+\beta_{l+1} = \beta} \pi_{\beta_1}^{(\n_1)}\cdots \pi_{\beta_l}^{(\n_l)}(D^{(\n_1)}\cdots D^{(\n_l)})_{\beta_{l+1}}^\gamma.
\end{equation*}
We combine $\beta = \beta_1 +... +\beta_{l+1}$ with the first item in \eqref{sg8} to obtain
$[\beta]= [\gamma] + \sum_{k=1}^l [\beta_{k}] + l$. Since the 
$\beta_1,\cdots,\beta_l$ may be assumed to be populated because of \eqref{sg14}, 
and we treat the case of a populated $\gamma$ that is not purely polynomial, we have
$[\beta_1],\cdots,[\beta_l]\ge -1$ and $[\gamma]\ge 0$ by \eqref{pop12}, and thus
$[\beta]\ge 0$. Hence $\beta$ is populated, which establishes \eqref{cco7}, and 
moreover not purely polynomial. For later purpose, we 
retain the logically equivalent
\begin{align}\label{ao77}
\mbox{if $\gamma$ is populated and }
(\Gamma^*)_{e_{\bf n}}^\gamma\not=0\;\mbox{for some}\;{\bf n}\not=\0,
\mbox{then }\gamma=e_{\bf m}\;\mbox{for some}\;{\bf m}\not=\0.
\end{align}

		\medskip
		
Note that \eqref{sg1} and \eqref{sg2} follow immediately 
from \eqref{sg3} by \eqref{dercom}, and by the characterizing properties 
\eqref{*p.18} and \eqref{der1}, 
which by induction implies $(D^{({\bf 0})})^l\mathsf{z}_k$ $=\tbinom{k+l}{l}\mathsf{z}_{k+l}$. 
Also the argument for \eqref{sg5} is easy: 
In view of \eqref{sg7}, the statement $(\Gamma^*-\mbox{id})_{\beta}^{\gamma}\neq 0$ implies that for some $l\geq 1$ and $\n_{1},...,\n_{l}$ 
		\begin{equation*}
		|\gamma|+l\alpha\underset{\eqref{sg8}}{=}|\beta_{l+1}|+\sum_{l'=1}^l |\n_{l'}|\underset{\eqref{sg9}}{<}|\beta_{l+1}|+\sum_{l'=1}^l |\beta_{l'}|\underset{\eqref{hom4}}{=}|\beta|+l\alpha.
		\end{equation*}
		
		\medskip
		
We turn to the proof of \eqref{multgamma}. To this
purpose, it is convenient to capitalize on \eqref{dercom} in order to re-sum \eqref{sg3}:
\begin{align}\label{komb}
\Gamma^*=\sum_{J}\Gamma^*_J\mbox{ where }
\Gamma^*_J:=\frac{1}{J!}\big(\prod_{{\bf n}}(\pi^{({\bf n})})^{J(\n)}\big)
\big(\prod_{\n}(D^{({\bf n})})^{J(\n)}\big),
\end{align}
where $J$ runs over all multi-indices of $\mathbb{N}_0^2$.
The ratio between the combinatorial $l!$ where $l:=\sum_{{\bf n}}J(\n)$,
and the combinatorial $J!:=\prod_{\n}(J(\n)!)$ is the number of ways of ordering 
the $l$ not necessarily distinct objects identified by $J$.
We now appeal to the iterated\footnote{the combinatorial factors can be checked on a standard
derivative by considering monomials} Leibniz' rule
on the level of the derivation $D^{(\n)}$ in form of
\begin{align*}
\frac{1}{J(\n)!}(D^{(\n)})^{J(\n)}\pi_1\cdots\pi_k
=\sum_{j_1+...+j_k = J(\n)}\frac{1}{j_1!}((D^{(\n)})^{j_1}\pi_1)\cdots
\frac{1}{j_k!}((D^{(\n)})^{j_k}\pi_k),
\end{align*}
which on the level of \eqref{komb} combines to
\begin{align*}
\Gamma_J^*\pi_1\cdots\pi_k=\sum_{J_1 +...+J_k = J}(\Gamma_{J_1}^*\pi_1)\cdots (\Gamma_{J_k}^*\pi_k).
\end{align*}
Now \eqref{multgamma} follows immediately from \eqref{komb},
where we use the previously established property that all the sums are effectively finite.

\medskip
		
		We finally turn to \eqref{cw01}. Due to \eqref{ao77}, it only remains to show for ${\bf m},{\bf n}\not=0$
		\begin{align*}
		(\Gamma^*)_{e_{\bf n}}^{e_{\bf m}}
		=\left\{\begin{array}{cl}
		{\tbinom{\bf n}{\bf m}}h^{{\bf n}-{\bf m}}&\mbox{if}\;{\bf m}\le{\bf n},\\
		0&\mbox{otherwise,}
		\end{array}\right.
		\end{align*}
		which follows from \eqref{sg2} and \eqref{sg20}.
\end{proof}

\medskip

We claim that these $\Gamma^*$'s form a group of endomorphisms of $\dum{T}^*$. As in Subsection \ref{subsec:shifts}, the triangular structure \eqref{sg5} implies the
invertibility of $\Gamma^*$. It only remains to establish the composition rule.
\begin{lemma}\label{lemcomp} Let $\Gamma^*$, $\Gamma'^*\in\textnormal{End}(\dum{T}^*)$ be given 
via \eqref{sg3} by $\{\pi^{(\n)}\in\dum{T}^*\}_{\n}$, $\{\pi'^{(\n)}\in\dum{T}^*\}_{\n}$, 
respectively. Then the composition $\Gamma^*\Gamma'^*\in\textnormal{End}(\dum{T}^*)$ is given 
by $\{\pi^{(\n)}+\Gamma^*\pi'^{(\n)}\in\dum{T}^*\}_{\n}$. Moreover, 
$\{\pi^{(\bf n)}+\Gamma^*\pi'^{(\bf n)}\}_{\bf n\not=0}$ satisfies
\eqref{sg9} and \eqref{sg20}, the latter with the shift $h+h'$.
\begin{proof} Fixing $\pi\in\dum{T}^*$, we have
\begin{align*}
\Gamma^*\Gamma'^*\pi&\underset{\eqref{sg3}}{=}\Gamma^*\sum_{l'\geq 0}\frac{1}{l'!}\sum_{\n_{1}',...,\n_{l'}'}\pi'^{(\n_{1}')}\cdots\pi'^{(\n_{l'}')}D^{(\n_{1}')}\cdots D^{(\n_{l'}')}\pi\\
&\hspace*{-0.43cm}\underset{\eqref{multgamma},\eqref{cco7}}{=}\sum_{l'\geq 0}\frac{1}{l'!}\sum_{\n_{1}',...,\n_{l'}'}(\Gamma^*\pi'^{(\n_{1}')})\cdots(\Gamma^*\pi'^{(\n_{l'}')})\Gamma^*(D^{(\n_{1}')}\cdots D^{(\n_{l'}')}\pi)\\
&\underset{\eqref{sg3}}{=}\hspace*{-3pt}\sum_{l'\geq 0}\sum_{l\geq 0}\frac{1}{l'!l!}\hspace*{-5pt}\sum_{\substack{\n_{1}',...,\n_{l'}'\\\n_{1},...,\n_{l}}}\hspace*{-4pt}(\Gamma^*\pi'^{(\n_{1}')})\cdots(\Gamma^*\pi'^{(\n_{l'}')})\pi^{(\n_{1})}\cdots\pi^{(\n_{l})}D^{(\n_{1})}\cdots D^{(\n_{l})}D^{(\n_{1}')}\cdots D^{(\n_{l'}')}\hspace*{-1pt}\pi
\end{align*}
and after re-summation, and since by Lemma \ref{lemts2} all sums are finite,  this equals
\begin{align*}
&\sum_{l\geq 0}\frac{1}{l!}\sum_{\n_{1},...,\n_{l}}\bigg(\sum_{l'=0}^{l}\left(\begin{matrix}l\\l'\end{matrix}\right)(\Gamma^*\pi'^{(\n_{1})})\cdots(\Gamma^*\pi'^{(\n_{l'})})\pi^{(\n_{l'+1})}\cdots\pi^{(\n_{l})}\bigg)D^{(\n_{1})}\cdots D^{(\n_{l})}\pi\\
=&\sum_{l\geq 0}\frac{1}{l!}\sum_{\n_{1},...,\n_{l}}(\Gamma^*\pi'^{(\n_{1})}+\pi^{(\n_{1})})\cdots(\Gamma^*\pi'^{(\n_{l})}+\pi^{(\n_{l})})D^{(\n_{1})}\cdots D^{(\n_{l})}\pi,
\end{align*}
which by \eqref{sg3} completes the proof of the composition rule. 

\medskip

The preservation of \eqref{sg9} is immediate from the triangular structure \eqref{sg5}. 
It only remains to show that \eqref{sg20} is preserved. Indeed, 
for $\m<\n$ we have
\begin{align}
\pi_{e_{\bf n}}^{({\bf m})}+\sum_{\gamma}(\Gamma^*)_{e_{\bf n}}^{\gamma}{\pi'}_{\gamma}^{({\bf m})}
	&\underset{\eqref{cw01}}{=}
	\pi_{e_{\bf n}}^{({\bf m})}+
	\sum_{\0\neq\n'\leq \n}\binom{\n}{\n'}h^{{\bf n}-{\bf n}'}
	{\pi'}_{e_{\bf n'}}^{({\bf m})}\label{jul01}\\
&\underset{\eqref{sg20}}{=}
\binom{\n}{\m}h^{\n-\m} + \sum_{\m<\n'\leq \n}\binom{\n}{\n'} 
h^{\n-\n'}\binom{\n'}{\m}{h'}^{\n'-\m}\nonumber\\
&\hspace*{0.2cm}= \sum_{\m\leq\n'\leq\n}\binom{\n}{\n'}\binom{\n'}{\m}h^{\n-\n'}{h'}^{\n'-\m},\nonumber
\end{align}
so that we may conclude by the identity
$\binom{\n}{\n'}\binom{\n'}{\m}$ $=\binom{\n}{\m}\binom{\n-\m}{\n'-\m}$ and
the binomial formula. If $\m<\n$ fails, also $\m<\n'$ in the sum
in line \eqref{jul01} fails, so that the the entire r.~h.~s.~in this line vanishes
according to \eqref{sg20}, as desired.
Note that the preservation of \eqref{sg20}
is even more obvious on the level of the characterization \eqref{cw04}:
\begin{align*}
\mathsf{P}\Gamma^*\Gamma'^*\pi[p]=\mathsf{P}\Gamma'^*\pi[p(\cdot + h)]
=\mathsf{P}\pi[p(\cdot + h+ h')].
\end{align*}
\end{proof}
\end{lemma}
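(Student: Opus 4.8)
The plan is to prove the composition rule by a direct computation starting from the exponential-type formula \eqref{sg3}, and then to verify separately that the resulting generators still satisfy \eqref{sg9} and \eqref{sg20}. First I would fix $\pi\in\dum{T}^*$ and expand $\Gamma^*\Gamma'^*\pi$ using \eqref{sg3} twice. The key input is the multiplicativity \eqref{multgamma} together with \eqref{cco7}: since all the factors $\pi'^{(\n_i')}$ and the terms $D^{(\n_1')}\cdots D^{(\n_{l'}')}\pi$ lie in $\dum{T}^*$ (the latter being populated whenever its image is, by \eqref{cco7}), I may pull $\Gamma^*$ inside the product, turning $\Gamma^*(\pi'^{(\n_1')}\cdots\pi'^{(\n_{l'}')} D^{(\n_1')}\cdots D^{(\n_{l'}')}\pi)$ into $(\Gamma^*\pi'^{(\n_1')})\cdots(\Gamma^*\pi'^{(\n_{l'}')})\Gamma^*(D^{(\n_1')}\cdots D^{(\n_{l'}')}\pi)$. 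Then I expand the remaining $\Gamma^*$ again via \eqref{sg3} and re-sum the two nested series by grouping the derivation operators: the product of an $l$-fold and an $l'$-fold string of $D$'s is an $(l+l')$-fold string, and collecting terms according to which of the combined indices come from the $\Gamma'^*$-block versus the $\Gamma^*$-block produces the binomial coefficient $\binom{l}{l'}$. The binomial factor is exactly what is needed to recognize the result as $\sum_l \frac{1}{l!}\sum_{\n_1,\dots,\n_l} (\Gamma^*\pi'^{(\n_1)}+\pi^{(\n_1)})\cdots(\Gamma^*\pi'^{(\n_l)}+\pi^{(\n_l)}) D^{(\n_1)}\cdots D^{(\n_l)}\pi$, which by \eqref{sg3} is precisely $\Gamma^*$ applied with generators $\{\pi^{(\n)}+\Gamma^*\pi'^{(\n)}\}_\n$. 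All re-summations are justified because Lemma \ref{lemts2}(i) guarantees the sums are effectively finite on each matrix component.

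Next I would check that the new generators $\{\pi^{(\n)}+\Gamma^*\pi'^{(\n)}\}_\n$ actually belong to $\dum{T}^*$ and satisfy the two structural conditions. Membership in $\dum{T}^*$ is clear from Lemma \ref{lemts2}(ii). Condition \eqref{sg9}, i.e. $(\pi^{(\n)}+\Gamma^*\pi'^{(\n)})_\beta\neq 0 \implies |\n|<|\beta|$, follows because $\pi^{(\n)}$ already satisfies it, and for the term $(\Gamma^*\pi'^{(\n)})_\beta=\sum_\gamma(\Gamma^*)_\beta^\gamma\pi'^{(\n)}_\gamma$ the triangular structure \eqref{sg5} gives $|\gamma|\le|\beta|$ in every nonzero summand, while \eqref{sg9} for $\pi'^{(\n)}$ gives $|\n|<|\gamma|\le|\beta|$. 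For \eqref{sg20}, which concerns only the purely polynomial component $\beta=e_\n$, I would compute $(\pi^{(\n)}+\Gamma^*\pi'^{(\n)})_{e_\m}$ explicitly: using the description \eqref{cw01} of $(\Gamma^*)_{e_\n}^\gamma$ (only purely polynomial $\gamma=e_{\n'}$ with $\0\neq\n'\le\n$ contribute) and feeding in \eqref{sg20} for both $\pi^{(\n)}$ and $\pi'^{(\n)}$, the sum telescopes via the Vandermonde-type identity $\binom{\n}{\n'}\binom{\n'}{\m}=\binom{\n}{\m}\binom{\n-\m}{\n'-\m}$ and the binomial theorem to $\binom{\n}{\m}(h+h')^{\n-\m}$, confirming \eqref{sg20} with shift $h+h'$. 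The case $\m<\n$ failing must be handled separately, but there every relevant term vanishes.

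The main obstacle I anticipate is the bookkeeping in the re-summation step: one has to carefully track how the combined string of derivations $D^{(\n_1)}\cdots D^{(\n_l)}D^{(\n_1')}\cdots D^{(\n_{l'}')}$ is reorganized, and verify that regrouping the generators into "new" generators $\pi^{(\n)}+\Gamma^*\pi'^{(\n)}$ produces exactly the combinatorial factor $\binom{l}{l'}$ and no spurious terms. This is the analogue, at the level of the full structure group, of the almost-tautological identity \eqref{gp1} from Subsection \ref{subsec:shifts}; the conceptual content is the pullback characterization $\Gamma^*\pi[a]=\pi[a(\cdot+\pi^{(\0)}[a])]$, but since that is only formal one is forced to do the honest series manipulation. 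I would also keep in mind the alternative, cleaner verification of \eqref{sg20} at the end via \eqref{cw04}, namely $\mathsf{P}\Gamma^*\Gamma'^*\pi[p]=\mathsf{P}\Gamma'^*\pi[p(\cdot+h)]=\mathsf{P}\pi[p(\cdot+h+h')]$, which makes the shift additivity transparent without any computation.
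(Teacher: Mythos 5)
Your proposal is correct and follows essentially the same route as the paper's proof: expanding both factors via \eqref{sg3}, pulling $\Gamma^*$ through the product by \eqref{multgamma} and \eqref{cco7}, re-summing with the binomial factor $\binom{l}{l'}$ to recognize the generators $\{\pi^{(\n)}+\Gamma^*\pi'^{(\n)}\}_\n$, and then checking \eqref{sg9} via the triangularity \eqref{sg5} and \eqref{sg20} via \eqref{cw01}, the Vandermonde identity and the binomial formula (with the same remark on the cleaner check through \eqref{cw04}).
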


\begin{definition}\label{defsg} We denote by $\dum{G}^*\subset\textnormal{End}(\dum{T}^*)$ 
the group of endomorphisms $\Gamma^*$ given by \eqref{sg3} 
with $\left\{\pi^{(\n)}\in \dum{T}^*\right\}_{\n}$ satisfying \eqref{sg9} and \eqref{sg20}.
\end{definition}
As for $\mathsf{T}^*$, the notation $\mathsf{G}^*$ is chosen because it is
the pointwise dual of a group $\mathsf{G}\subset{\rm End}(\mathsf{T})$
that may be interpreted as the \textbf{structure group} in regularity structures. 
In particular, it follows from the last part
of the statement of Lemma \ref{lemcomp} that $\mathsf{G}$ acts on $\mathsf{\bar{T}}\subset \mathsf{T}$
in a way that is isomorphic to the action of $\mathbb{R}^2$ on $\mathbb{R}[y_1,y_2]/\mathbb{R}$.

\medskip

Summing up, the objects $(\mathsf{A}, \mathsf{T}^*,\mathsf{G}^*)$, cf. Definitions \ref{def:hom}, \ref{def:modspace} and \ref{defsg}, are dual to a regularity structure 
$(\mathsf{A},\mathsf{T},\mathsf{G})$. In our setting, however, we will make no use of 
this ``primal" side, and only the dual side will be relevant.

\medskip

Let us now return to $F_{x}^*$, which is the endomorphism that centers the pre-model at $x\in\R^{2}$, cf. \eqref{fm20}. By the construction provided in Subsection \ref{subsec:centeredmodel}, $F_{x}^*\in\dum{G}^*$, so in particular it is invertible. Thus, we may describe the change of the base point in our centered model via the structure group, and that is precisely the task of the \textit{re-expansion maps} $\Gamma_{xy}^*$. Let $x,y\in\R^{2}$. Using \eqref{fm20} and the invertibility of $F_{x}^*$, we obtain
\begin{equation}\label{**p.20}
\Pi_{y}(z)=F_{y}^*\mathbf{\Pi}(z,z)+\pi_{y}^{(\0)}=F_{y}^*F_{x}^{*-1}\Pi_{x}(z)-F_{y}^*F_{x}^{^*-1}\pi_{x}^{(\0)}+\pi_{y}^{(\0)}.
\end{equation}
Defining 
\begin{equation}\label{*p.20}
\Gamma_{yx}^*:=F_{y}^*F_{x}^{*-1},
\end{equation} 
which, according to Lemma \ref{lemcomp}, is generated by
\begin{equation*}
\{\pi_{yx}^{(\n)}\}_{\n}:=\{\pi_{y}^{(\n)}-F_{y}^*F_{x}^{*-1}\pi_{x}^{(\n)}\}_{\n},
\end{equation*}
we see that \eqref{**p.20} takes the form
\begin{equation}\label{sg6}
\Pi_{y}=\Gamma_{yx}^*\Pi_{x}+\pi_{yx}^{(\0)}.
\end{equation}
For later reference we note that, by the fourth item in \eqref{fm4}, this implies in particular 
\begin{equation}\label{sg15}
\pi_{yx}^{(\0)}=\Pi_{y}(x).
\end{equation}
Definition \eqref{*p.20}, moreover, implies transitivity in form of
\begin{equation}\label{sg12}
\Gamma_{zx}^*=\Gamma_{zy}^*\Gamma_{yx}^*.
\end{equation}
The two relations \eqref{sg6} and \eqref{sg12} are the algebraic identities postulated in \cite[Definition 3.3]{Ha16} for the model.

\begin{remark}\label{remmodel}
	Note that \eqref{sg6} differs from the analogue identity in \cite[Definition 3.3]{Ha16} in the appearance of the constant $\pi_{yx}^{(\0)}$. The presence of $\pi_{yx}^{(\0)}$ is due to our restricted definition of the model space. Whereas in \cite[Section 4.2]{Ha16} $\dum{T}$ 
includes the neutral element $\dum{1}$ of homogeneity $0$, our model space does not 
include these elements (which we could identify with the ``missing" variable $\z_{\n=\0}$). See \cite[Subsection 5.3]{LOT21} for more details.\end{remark}

\medskip

\section{Estimates of the model}\label{sec:analyticalproperties}
In this section we establish the bounds on the model
axiomatized in \cite[Definition 3.3]{Ha16}.
The parabolic scaling of $\partial_2 - \partial_1^2$
imposes the Carnot-Carathéodory distance \eqref{dist01}. It is convenient to work with a family 
$\{(\cdot)_t\}_{t>0}$ of convolution operators compatible with the parabolic scaling
and in addition satisfying the semi-group 
property 
\begin{align}\label{ud04}
((\cdot)_t)_T=(\cdot)_{T+t}.
\end{align}
As in \cite[Section 2]{OW19}, we take $(\cdot)_t=\psi_t$,
where $\psi_t$ is the kernel of the semi-group generated by the space-time elliptic operator 
$\partial_1^4 - \partial_2^2$.
This operator is the squared modulus 
$(\partial_2-\partial_1^2)^*(\partial_2-\partial_1^2)$ of the original operator
and thus respects the parabolic scaling, next to being non-negative and symmetric.
The fact that it contains $\partial_2-\partial_1^2$ as a factor will allow us to represent
the kernel of $(\partial_2-\partial_1^2)^{-1}$ in terms of $(\cdot)_t$, 
which is convenient in the integration step, see Lemma \ref{lem:int}.
Besides these structural properties, we need
the following moment bounds\footnote{which are an easy consequence
of the fact of scaling and that $\psi_{t=1}$ has Fourier transform 
$\exp(-k_1^4-k_2^2)$ and thus is a Schwartz function}
\begin{equation}\label{ker02}
	\int dy |\partial^\n \psi_t (x-y)||y-x|^\eta \lesssim (\sqrt[4]{t})^{\eta - |\n|}.
\end{equation}
Convolutions can be used to characterize negative (parabolic) H\"older spaces,
(cf. \cite[Lemma A.1]{OW19}). We thus formulate our main regularity assumption as follows
\begin{equation}\label{normxi}
N_0:= \sup_t (\sqrt[4]{t})^{2-\alpha}\sup_{y}|\xi_t(y)|<\infty.
\end{equation}

\begin{theorem}\label{propanalyticbounds}
Let $\alpha\in(1,2)$ with $\alpha\notin \Q$. 
Let $\xi$ be smooth, periodic and satisfy \eqref{normxi}. 
Then for all points $x$, $y\in\mathbb{R}^{2}$, the following estimates hold:
	\begin{align}
		|\Pi_{x\beta}(y)|&\lesssim N_{0}^{[\beta]+1}|y-x|^{|\beta|} \text{ for all populated }\beta,\label{ap1}\\
		|(\Gamma_{yx}^*-\textnormal{id})_\beta^\gamma|&\lesssim N_{0}^{[\beta]-[\gamma]}|y-x|^{|\beta|-|\gamma|}\text{ for all populated }(\gamma,\beta)\text{ with }|\gamma|<|\beta|.\label{ap3}
	\end{align}
Here $\lesssim$ means inequality up to a constant which only depends on $\alpha,\beta,\gamma$.
\end{theorem}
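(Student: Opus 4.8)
The plan is to prove \eqref{ap1} and \eqref{ap3} simultaneously by induction on the \emph{length} $\|\beta\|:=\sum_{k\ge0}\beta(k)+\sum_{\n\ne\0}\beta(\n)$; this is the natural parameter since by \eqref{fm4} the right-hand side $\Pi^-_{x\beta}$ only involves $\Pi_{x\beta'}$ with $\|\beta'\|<\|\beta\|$, and the same will turn out to hold for the pieces of $\Gamma^*_{yx}$ that carry the bound. The purely polynomial case $\beta=e_\n$ is immediate: $\Pi_{xe_\n}(y)=(y-x)^\n$ gives $|\Pi_{xe_\n}(y)|\lesssim|y-x|^{|\n|}$ (and $[e_\n]+1=0$), while \eqref{cw01} identifies the relevant entries of $\Gamma^*_{yx}$ with binomial coefficients times powers of the shift vector $x-y$, which is \eqref{ap3}. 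All the substance is in the populated, not purely polynomial case, for which I would, at each length, carry out three steps: (a) a reconstruction-type bound on $\Pi^-_{x\beta}$; (b) an integration (Schauder) estimate producing $\Pi_{x\beta}$, hence \eqref{ap1}; (c) an algebraic three-point argument producing $\Gamma^*_{yx}$, hence \eqref{ap3}.

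For step (a), fix $k$ and $\beta_1,\dots,\beta_{k+1}$ with $e_k+\beta_1+\dots+\beta_{k+1}=\beta$; we may assume all $\beta_i$ populated (else the summand vanishes), and then at least one $\beta_i$ is not purely polynomial, because $\beta$, being populated and not purely polynomial, has $[\beta]\ge0$ whereas all purely polynomial $\beta_i$ would force $[\beta]<0$. By the induction hypothesis \eqref{ap1}, $|\Pi_{x\beta_i}(z)|\lesssim N_0^{[\beta_i]+1}|z-x|^{|\beta_i|}$; since every populated homogeneity is $\ge 1$ (so each $\Pi_{x\beta_i}\in C^1$) and $\alpha-2>-1$, the product $\Pi_{x\beta_1}\cdots\Pi_{x\beta_k}\,\partial_1^2\Pi_{x\beta_{k+1}}$ is classically well defined. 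Feeding in the scale-$t$ bounds on the factors (supplied by step (b) at shorter lengths), the additivity \eqref{jan04}, and the additivity of $[\cdot]$ (whence $\sum_i([\beta_i]+1)=[\beta]+1$), the classical product/reconstruction estimate based at $x$ yields
\begin{equation*}
|(\Pi^-_{x\beta})_t(y)|\lesssim N_0^{[\beta]+1}(\ti)^{\alpha-2}(\ti+|y-x|)^{|\beta|-\alpha},
\end{equation*}
the worst regularity $\alpha-2$ being realized exactly when the differentiated factor is $\beta_{k+1}=0$; this is consistent with \eqref{normxi} for $\beta=0$.

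For step (b), $\Pi_{x\beta}$ solves $(\partial_2-\partial_1^2)\Pi_{x\beta}=\Pi^-_{x\beta}$ with $\partial^\n\Pi_{x\beta}(x)=0$ for $|\n|<|\beta|$ and polynomial growth of order $|\beta|$, cf. \eqref{fm4}. Using $\partial_1^4-\partial_2^2=(\partial_2-\partial_1^2)^*(\partial_2-\partial_1^2)$ to write the kernel of $(\partial_2-\partial_1^2)^{-1}$ as a $t$-integral of $(\partial_2+\partial_1^2)\psi_t$, subtracting the Taylor polynomial at $x$, splitting $\int_0^\infty dt$ at $\ti\sim|y-x|$, and invoking the moment bounds \eqref{ker02} together with (a), one obtains $|\Pi_{x\beta}(y)|\lesssim N_0^{[\beta]+1}|y-x|^{|\beta|}$, which is \eqref{ap1}. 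Here it is essential that $|\beta|\notin\Z$ — true for populated, not purely polynomial $\beta$ precisely because $\alpha\notin\Q$ — so that no logarithmically divergent resonance occurs at the critical scale. Running the same estimate at a general scale rather than only as $t\to 0$ produces the scale-$t$ bounds on $\Pi_{x\beta}$ and $\partial_1^2\Pi_{x\beta}$ that feed back into (a) at the next length; I would isolate this Schauder estimate as a self-contained lemma.

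For step (c), the generator $\pi^{(\0)}_{yx}=\Pi_y(x)$, cf. \eqref{sg15}, is controlled directly by \eqref{ap1}. The remaining generators $\pi^{(\n)}_{yx}$ ($\n\ne\0$) are the $e_\n$-columns of $\Gamma^*_{yx}$: on purely polynomial rows they are explicit via \eqref{cw01}, and on a not purely polynomial row $\beta$ I would read them off the finite expansion $\Pi_{y\beta}(z)=\sum_\gamma(\Gamma^*_{yx})_\beta^\gamma\Pi_{x\gamma}(z)+\pi^{(\0)}_{yx,\beta}$ coming from \eqref{sg6}: the part of the sum over not purely polynomial $\gamma$ is controlled via the identity $\mathbf{\Pi}^-_y=\Gamma^*_{yx}\mathbf{\Pi}^-_x$ (a consequence of \eqref{sg6}, \eqref{multgamma}, \eqref{sg1}, together with $\Pi^-_{x\gamma}=0$ for purely polynomial $\gamma$), solved triangularly in $|\gamma|$ using the near-diagonal bound from (a); what remains of the expansion is a polynomial in $z-x$ of degree $<|\beta|$ whose supremum over $|z-x|\le|y-x|$ is $\lesssim N_0^{[\beta]+1}|y-x|^{|\beta|}$ (by \eqref{ap1} and $|z-y|\le|z-x|+|y-x|$), so its coefficients are $\lesssim N_0^{[\beta]+1}|y-x|^{|\beta|-|\n|}$. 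The exponential-type formula \eqref{sg3} then propagates these generator bounds to every entry $(\Gamma^*_{yx}-\textnormal{id})_\beta^\gamma$ — using the homogeneity bookkeeping behind Lemma \ref{lemts2} and Lemma \ref{lemder}, in particular $[\beta]\ge[\gamma]$ when the entry is nonzero — which is \eqref{ap3}. I expect step (c) to be the main obstacle: the identity $\mathbf{\Pi}^-_y=\Gamma^*_{yx}\mathbf{\Pi}^-_x$ pins the not-purely-polynomial columns of $\Gamma^*_{yx}$ down only up to the degeneracy that distinct $\gamma$ may share the same homogeneity and hence are not separated by scaling alone, so extracting the individual estimates forces one to interlock the analytic bounds with the algebraic structure of $\mathsf G^*$ and to keep scrupulous track of base points in step (a); arranging the nested inductions correctly is where the real work lies.
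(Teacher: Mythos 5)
The decisive gap is your choice of induction parameter. You induct on the length $\|\beta\|$ and justify it by asserting that ``the same will turn out to hold for the pieces of $\Gamma^*_{yx}$ that carry the bound''; this is false. By the exponential formula \eqref{sg3}--\eqref{sg7}, the generator components $\pi^{(\0)}_{yx\,0}=\Pi_{y0}(x)$ and $\pi^{((1,0))}_{yx\,0}$ (the latter admissible because $|(1,0)|=1<\alpha=|0|$, cf.\ \eqref{sg9}) produce nonvanishing entries $(\Gamma^*_{yx}-\textnormal{id})_\beta^\gamma$ with $\gamma=\beta+e_k-e_{k+1}$ (same length, homogeneity lowered by $\alpha$) and with $\gamma=\beta+e_{(1,0)}$, $\beta+2e_{(1,0)},\dots$ (strictly \emph{larger} length, homogeneity lowered by $\alpha-1$ each time); these $\gamma$ are populated as soon as $[\beta]$ is large enough. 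Such columns enter unavoidably at stage $\beta$: in your own step (c) the expansion $\Pi_{y\beta}(z)=\sum_\gamma(\Gamma^*_{yx})_\beta^\gamma\Pi_{x\gamma}(z)+\pi^{(\0)}_{yx\beta}$ contains them, so you need $\eqref{ap1}_\gamma$ for indices of length $>\|\beta\|$, which a length induction has not yet produced; the same re-expansion is what furnishes the base-point continuity of the germ needed for reconstruction (the paper's \eqref{nrc04}, via $\Pi_x(z)-\Pi_x(y)=-\Gamma^*_{xz}\Pi_z(y)$) and the scale-$t$ bound \eqref{nrc05} on $\partial_1^2\Pi_x$, both of which your step (a) glosses over as ``classical''. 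Plain homogeneity does not work either, since the $k=0$ term in \eqref{fm4} (multiplication by $\z_0$) leaves $|\cdot|$ unchanged. The paper's key device, absent from your proposal, is the augmented ordinal $|\beta|_\prec=|\beta|+\lambda\beta(0)$, $\lambda\in(0,1)$, together with Lemma \ref{lemnorm}: both the product structure \eqref{norm02} and every nonzero entry of $\Gamma^*$ \eqref{norm03} strictly decrease $|\cdot|_\prec$, and only with this ordering do the four estimates \eqref{ap1}, \eqref{ap1min}, \eqref{ap2}, \eqref{ap3} interlock into a closed induction.

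Your fallback for step (c) --- controlling the not-purely-polynomial part of the expansion by ``solving $\mathbf{\Pi}^-_y=\Gamma^*_{yx}\mathbf{\Pi}^-_x$ triangularly in $|\gamma|$'' --- is not a proof: that identity neither determines nor bounds individual matrix entries, for exactly the degeneracy you flag (distinct $\gamma$ of equal homogeneity, and no quantitative independence of the functions $\Pi^-_{x\gamma}$), so this step cannot be repaired by scaling alone. The paper proceeds in the opposite direction: the entries are explicit polynomials in the generators via \eqref{sg7}, so one bounds the generators first. Concretely, within one $\prec$-step: $\eqref{ap2}_{\prec\beta}$ yields the not-purely-polynomial columns at row $\beta$ (Lemma \ref{lemalg}, purely algebraic); reconstruction and integration (Lemmas \ref{lemrhs}, \ref{lemrecon}, \ref{lemlhs}, \ref{lem:int}, which your steps (a) and (b) do mirror in spirit) yield $\eqref{ap1}_\beta$; the three-point identity $\Pi_x(z)-\Pi_x(y)-\Pi_y(z)=(\Gamma^*_{xy}-\textnormal{id})\Pi_y(z)$ together with equivalence of norms on polynomial coefficients yields the generator bound $\eqref{ap2}_\beta$ (Lemma \ref{lemtpa}); and \eqref{sg2} converts this into the purely polynomial columns (Lemma \ref{lemalgII}). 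Without the $\prec$-ordering and this generator-first treatment of $\Gamma^*$, your induction does not close.
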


The reason why we have to exclude $\alpha\in \Q$ resides in the Schauder estimate in the
integration step, see Lemma \ref{lem:int}: 
Having $\alpha\notin \Q$ guarantees that $|\beta|\notin \N$ 
for all not purely-polynomial multi-indices $\beta$, cf. \eqref{hom}.


\subsection{Strategy of the proof}\label{subsec:strategy}
\mbox{}

We shall establish Theorem \ref{propanalyticbounds} by induction.
One might be tempted to use induction in $|\cdot|$. However, $|\beta|$ is oblivious to
the value of $\beta(0)$, cf. \eqref{hom}. Hence because of the $(k=0)$-term
in the definition \eqref{fm4} of $\Pi_{x\beta}^-$, the latter does 
not only depend on $\Pi_{x\gamma}$ with $|\gamma|<|\beta|$.
Hence we need to incorporate $\beta(0)$ into our ordering: Fixing a $\lambda\in (0,1)$ we define
\begin{equation}\label{LOord01}
	|\beta|_\prec := |\beta| + \lambda \beta(0)
\end{equation}
and will write $\beta'\prec\beta$ for $|\beta'|_\prec < |\beta|_\prec$. 
As for $|\cdot|$, the range for $|\cdot|_{\prec}$ is bounded from below and locally finite, which makes it suitable for an induction argument.
This augmented ordinal provides the strict inequalities which ensure 
the sufficiency of the induction hypothesis:
\begin{lemma}\label{lemnorm} For populated multi-indices
\begin{align}
&\beta = e_l + \beta_1 +...+\beta_{l+1} \implies \beta_1,...,\beta_{l+1}\prec \beta; 
\label{norm02}\\
&(\Gamma^*-\textnormal{id})_\beta^\gamma \neq 0 \implies \gamma\prec\beta.\label{norm03}
\end{align}
\end{lemma}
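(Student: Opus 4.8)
The plan is to verify the two implications \eqref{norm02} and \eqref{norm03} directly from the definition \eqref{LOord01} of $|\cdot|_\prec$, leaning on the additivity property \eqref{hom4} of $|\cdot|-\alpha$ and the strict triangularity \eqref{sg5} of $\Gamma^*$ already established in Lemma \ref{lemts2}. The key point throughout is that adding the $\lambda\beta(0)$-correction can only ever spoil a \emph{non-strict} inequality, so I must make sure that in each case the underlying homogeneity inequality is already strict, or that the $\beta(0)$-contributions behave in the right direction; the delicate case is precisely the one where $|\gamma|=|\beta|$ is possible, i.e. where the homogeneities fail to separate the multi-indices and the whole point of the augmented order is to break the tie.

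For \eqref{norm02}: fix $i\in\{1,\dots,l+1\}$ and compare $|\beta_i|_\prec$ with $|\beta|_\prec$. By \eqref{jan04} we have $|\beta| = |\beta_1| + \cdots + |\beta_{l+1}|$, and since all $\beta_j$ are populated, \eqref{hom1} gives $|\beta_j| > 0$ for each $j$; hence $|\beta_i| < |\beta|$ strictly — UNLESS $l=0$ and $\beta_1=\beta$, but then $\beta = e_0+\beta_1$ forces $l=0$ meaning the sum has one term $e_0+\beta_1=\beta$, so $\beta_1(0)=\beta(0)-1<\beta(0)$, giving $|\beta_1|_\prec = |\beta|-\alpha+\lambda(\beta(0)-1) < |\beta|+\lambda\beta(0) = |\beta|_\prec$ anyway (using $\alpha>0$, $\lambda>0$). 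So one has to split into the cases $l\ge 1$ and $l=0$. For $l\ge 1$, strict positivity of the remaining $|\beta_j|$'s gives $|\beta_i|<|\beta|$ with room to spare, and since $\beta_i(0)\le\beta(0)$ (all entries of $\beta_i$ are bounded by those of $\beta$ minus the $e_l$ bit), we get $|\beta_i|_\prec = |\beta_i|+\lambda\beta_i(0) < |\beta| + \lambda\beta(0) = |\beta|_\prec$, provided the homogeneity gap $|\beta|-|\beta_i| \ge \sum_{j\ne i}|\beta_j|$ dominates any possible increase $\lambda(\beta_i(0)-\beta(0))$, which here is $\le 0$, so it is fine. In fact the cleanest phrasing: $\beta_i(0)\le\beta(0)$ always (since $\beta = e_l + \sum\beta_j$ with $l$ possibly $0$; if $l=0$ then $\beta(0) = 1+\sum\beta_j(0) > \beta_i(0)$, if $l\ne 0$ then $\beta(0)=\sum\beta_j(0)\ge\beta_i(0)$), hence $\lambda\beta_i(0)\le\lambda\beta(0)$, and combined with $|\beta_i|\le|\beta|$ (from $|\beta_j|>0$ plus \eqref{jan04}, with equality only if $l+1=1$ and the single $\beta_1=\beta$ which is impossible since then $e_l=0$ forces the decomposition $\beta = e_0+\beta$ needing $\beta(0)\ge 1$ and $\beta_1(0)<\beta(0)$ strictly) we conclude. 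I would write this carefully, isolating the $l=0$ subtlety.

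For \eqref{norm03}: by \eqref{sg5} we already know $(\Gamma^*-\mathrm{id})_\beta^\gamma\ne 0$ implies $|\gamma|<|\beta|$ strictly. The remaining task is to control the $\beta(0)$-correction, i.e. to show $|\gamma|+\lambda\gamma(0) < |\beta|+\lambda\beta(0)$. This is immediate if $\gamma(0)\le\beta(0)$, but $\Gamma^*$ can \emph{increase} the zeroth component — recall the discussion after \eqref{co03bis} and in footnote, the $e_0$-component is exactly the problematic one. So the argument must quantify how much $\gamma(0)$ can exceed $\beta(0)$ and compare it against the homogeneity gap $|\beta|-|\gamma|>0$. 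From the matrix representation \eqref{sg7} together with \eqref{der2}/\eqref{der3}, each application of $D^{(\0)}$ can raise the $\z_0$-count by at most... actually $D^{(\0)}$ sends $\z_k\mapsto(k+1)\z_{k+1}$, so it never creates $\z_0$; only multiplication by the $\pi^{(\n)}$'s, which lie in $\mathsf{T}^*$ and whose components satisfy \eqref{sg9}, can. I would extract from \eqref{sg7} and \eqref{hom7} a bound of the form $\gamma(0)-\beta(0) \le C(|\beta|-|\gamma|)/\alpha$ or similar, then choose $\lambda$ small enough (this is why $\lambda\in(0,1)$ is only required to be sufficiently small) that $\lambda$ times this is strictly less than $|\beta|-|\gamma|$. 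The main obstacle is making this quantitative bound on $\gamma(0)-\beta(0)$ clean: one wants to say that increasing $\z_0$-count by one unit costs at least some fixed positive amount of homogeneity, which follows because each $\pi^{(\n)}$ factor contributing a $\z_0$ has homogeneity $|\beta_k|$ with $[\beta_k]\ge -1$ and $|\beta_k|>0$, so it eats into the budget $|\beta| - |\gamma| = \sum(|\beta_k|) - \sum|\n_k| - l\alpha + \ldots$; tracking this through \eqref{sg8} and \eqref{hom4} gives the desired linear bound, and then $\lambda$ small finishes it. This $\lambda$-smallness tuning, and correctly bookkeeping the $e_0$-growth against the homogeneity loss, is where the real work of the proof sits.
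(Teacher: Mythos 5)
Your treatment of \eqref{norm02} is essentially the paper's argument: split into $l\ge 1$, where \eqref{jan04} and \eqref{hom1} give a strict drop in homogeneity while $\beta_i(0)\le\beta(0)$, and $l=0$, where the tie is broken by $\beta_1(0)=\beta(0)-1$ and $\lambda>0$. One slip: for $l=0$ you write $|\beta_1|_\prec=|\beta|-\alpha+\lambda(\beta(0)-1)$, but since $|e_0|=\alpha$, additivity of $|\cdot|-\alpha$ gives $|\beta_1|=|\beta|$ exactly -- this equality is precisely why the augmented order \eqref{LOord01} is needed at all. Your conclusion survives because you only use $\lambda>0$, but the identity as written is wrong.

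For \eqref{norm03} there is a genuine gap: the decisive estimate is announced rather than proved, and the mechanism behind the growth of the zeroth component is misattributed. You claim that $D^{(\0)}$ ``never creates $\z_0$'' and that only multiplication by the $\pi^{(\n)}$'s can raise it; for the comparison relevant to the lemma it is the other way around. In the matrix representation \eqref{sg7}, the factors $\pi^{(\n_k)}_{\beta_k}$ only split $\beta$ into $\beta_1+\dots+\beta_{l+1}$, so they can never make $\gamma(0)$ exceed $\beta(0)$; it is each factor $D^{(\0)}$ which, by \eqref{der2}, lets the input index gain one unit in the zeroth slot relative to the output. The paper records this as \eqref{norm06}, $\gamma(0)\le\beta_{l+1}(0)+\#\{k\,|\,\n_k=\0\}$, proved by induction on $l$; this is exactly the ``linear bound'' you say you would extract, and it is the heart of the proof. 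Once it is available, every unit of excess in $\gamma(0)$ is paired with one factor $\pi^{(\0)}_{\beta_k}$ whose homogeneity is at least $1$ (populated multi-indices have $|\cdot|\ge 1$ since $\alpha>1$), so combining \eqref{norm06} with \eqref{sg16} termwise yields $|\beta|_\prec-|\gamma|_\prec\ge\sum_{k}\bigl(|\beta_k|_\prec-|\n_k|-\lambda\delta_\0^{\n_k}\bigr)>0$, using \eqref{sg9} when $\n_k\ne\0$ and $|\beta_k|\ge 1>\lambda$ when $\n_k=\0$. Consequently no ``sufficiently small $\lambda$'' tuning is needed -- and none is permitted: \eqref{LOord01} fixes an arbitrary $\lambda\in(0,1)$, so an argument valid only for small $\lambda$ would not prove the lemma as stated. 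Until the combinatorial bound \eqref{norm06} (or an equivalent) is actually established, your proof of \eqref{norm03} is incomplete.
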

\begin{proof}
We start with \eqref{norm02}, distinguishing the cases $l\ge 1$ and $l=0$. For
$l\ge 1$, we appeal to \eqref{jan04} which implies $|\beta|$ $=|\beta_1|+...+|\beta_{l+1}|$. 
By \eqref{hom1} and $l+1\ge 2$ this yields $|\beta|>|\beta_1|,...,|\beta_{l+1}|$. 
In addition, by $\beta(0)=\beta_1(0) + ... + \beta_{l+1}(0)$ we also have 
$\beta(0)\ge \beta_1(0),...,\beta_{l+1}(0)$.
For $l=0$ we start from $\beta=e_0+\beta_1$ and thus have $|\beta|=|\beta_1|$ and
$\beta(0)>\beta_1(0)$. Here we need $\lambda>0$.
	
	\medskip
	
We now turn to the proof of \eqref{norm03} and consider the component-wise representation 
\eqref{sg7}. It follows from the definitions \eqref{der2} of $D^{(\0)}$ and \eqref{der3} of 
$D^{(\n)}$ for $\n\neq \0$ by induction in $l$ that
\begin{equation}\label{norm06}
(D^{(\n_1)}\cdots D^{(\n_l)})_{\beta_{l+1}}^\gamma \neq 0 
\implies \beta_{l+1}(0) + \#\{k|\n_{k}={\bf 0}\}\ge\gamma(0).
\end{equation}
%
Combining \eqref{norm06} with \eqref{sg16} we obtain
\begin{align*}
|\beta|_\prec = |\beta| + \lambda \beta(0) 
&\geq |\gamma|+\sum_{k=1}^l(|\beta_{k}|-|\n_{k}|) 
+\lambda\Big(\gamma(0)+\sum_{k=1}^l \beta_{k}(0)-\#\{k|\n_{k}={\bf 0}\}\Big)\\
&= |\gamma|_{\prec}+\sum_{k=1}^l (|\beta_{k}|_\prec - |\n_{k}| - \lambda \delta_\0^{\n_{k}}).
\end{align*}
It remains to argue that $|\beta_{k}|_\prec - |\n_{k}| - \lambda \delta_\0^{\n_{k}}>0$. 
In case of $\n_{k}=\0$, this is equivalent to $|\beta_{k}|_\prec - \lambda >0$;
this holds since $|\cdot|_{\prec}$ $\ge|\cdot|$ $\ge 1$, here we need $\lambda<1$.
In case of $\n_{k}\neq \0$, 
this is equivalent to $|\beta_{k}|_\prec-|\n_{k}|>0$, which is true by 
$|\cdot|_\prec\ge|\cdot|$ and the population condition \eqref{sg9}. 
\end{proof}

\medskip

We have to complement the induction by two further estimates. 
The first one is an estimate of the r.~h.~s., namely
\begin{align}\label{ap1min}
|\Pi_{x\beta t}^-(y)|\lesssim_{\beta,\alpha}N_{0}^{[\beta]+1}(\sqrt[4]{t})^{\alpha-2}
(\sqrt[4]{t}+|y-x|)^{|\beta| -\alpha}\nonumber\\ 
\text{ for all populated not purely polynomial }\beta;
\end{align}
the second one is an estimate of the characters $\pi_{xy}^{(\n)}$, namely
\begin{equation}\label{ap2}
|\pi_{yx\beta}^{(\n)}|\lesssim_{\beta,\alpha}N_{0}^{[\beta]+1}|y-x|^{|\beta|-|\n|}
\text{ for all populated }\beta\text{ and ${\bf n}$ with }|\n|<|\beta|.
\end{equation}
Let us now explain how the four estimates \eqref{ap1}, \eqref{ap3}, \eqref{ap1min},
and \eqref{ap2} are logically intertwined in the induction
over all populated multi-indices $\beta$ in terms of $|\beta|_{\prec}$; it is crucial
to distinguish the purely polynomial $\gamma$'s in \eqref{ap3} and treat them at the end
of a step.
The base case, which because of $\alpha>1$ just contains
$\beta=e_{(1,0)}$, is automatically included.

\begin{lemma}\label{lemalg}(Algebraic argument)
Suppose $\eqref{ap2}_{\prec \beta}$ holds. 
Then $\eqref{ap3}_\beta^\gamma$ holds for all not purely polynomial $\gamma$.
\end{lemma}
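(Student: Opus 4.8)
The plan is to start from the definition $\Gamma_{yx}^* = F_y^* (F_x^*)^{-1}$, which by Lemma \ref{lemcomp} is the element of $\dum G^*$ generated by $\{\pi_{yx}^{(\n)}\}_\n$, and to read off the coefficients $(\Gamma_{yx}^*)_\beta^\gamma$ from the exponential-type formula \eqref{sg3}. More precisely, I would use the matrix representation \eqref{sg7}, applied with the generating characters $\pi^{(\n_i)} = \pi_{yx}^{(\n_i)}$: a given entry $(\Gamma_{yx}^* - \textnormal{id})_\beta^\gamma$ is a finite sum (finiteness by Lemma \ref{lemts2}\textit{(i)}) of products $\pi_{yx\,\beta_1}^{(\n_1)}\cdots\pi_{yx\,\beta_l}^{(\n_l)}\,(D^{(\n_1)}\cdots D^{(\n_l)})_{\beta_{l+1}}^\gamma$ over decompositions $\beta_1 + \dots + \beta_{l+1} = \beta$. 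The point of the hypothesis $\eqref{ap2}_{\prec\beta}$ is that each $\beta_i$ appearing here is populated and, by the triangularity \eqref{sg5} together with \eqref{hom4}, has strictly smaller homogeneity than $\beta$; in fact I expect one can argue $\beta_i \prec \beta$ (this is the content of a statement like \eqref{norm03} read for the generators), so that \eqref{ap2} is available for every factor $\pi_{yx\,\beta_i}^{(\n_i)}$.

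The estimate then proceeds by plugging in. Each factor is bounded by $\eqref{ap2}$: $|\pi_{yx\,\beta_i}^{(\n_i)}| \lesssim N_0^{[\beta_i]+1}|y-x|^{|\beta_i| - |\n_i|}$. The derivation factor $(D^{(\n_1)}\cdots D^{(\n_l)})_{\beta_{l+1}}^\gamma$ is a bounded (finite) combinatorial quantity, nonzero only when the constraints of Lemma \ref{lemder} hold, i.e. $[\beta_{l+1}] = [\gamma] + l$ and $|\beta_{l+1}|_p + \sum_k |\n_k| = |\gamma|_p$; via \eqref{hom7} this translates into a homogeneity identity. Multiplying the bounds and using additivity — the multi-index relation $\beta_1 + \dots + \beta_{l+1} = \beta$ gives $\sum_i([\beta_i]) = [\beta] - $ (something), and \eqref{hom4}/\eqref{jan04} give $\sum_i |\beta_i| = |\beta| + (\text{shift by }l\alpha)$ which must cancel against the $|\n_i|$ contributions and the $D$-factor's homogeneity shift — one collects exactly $|y-x|^{|\beta| - |\gamma|}$ and $N_0^{[\beta] - [\gamma]}$, as claimed. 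Since $\gamma$ is not purely polynomial, one does not need to worry here about the $\bar{\dum T}^*$-part where \eqref{cw01} would force a different (binomial, $h$-dependent) form; this is why the restriction to not purely polynomial $\gamma$ enters, and also why the exponents $[\gamma]$, $|\gamma|$ are the genuine ones rather than being replaced by the $e_\n$-values.

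The main obstacle I anticipate is bookkeeping the exponents so that the powers of $|y-x|$ and of $N_0$ come out exactly right rather than merely with the correct leading order: one must carefully combine the three sources — the $l$ factors of $\pi_{yx}^{(\n_i)}$, the combinatorial relations of Lemma \ref{lemder}, and the additivity identities \eqref{hom4} and for $[\cdot]$ coming from $\sum\beta_i = \beta$ minus the $e_{\n_i}$'s absorbed into the derivations — and check that all the $\alpha$'s, the $|\n_i|$'s and the $+1$'s in the $N_0$-exponents telescope. A secondary point requiring care is that $\{\pi_{yx}^{(\n)}\}$ genuinely lies in $\dum T^*$ and satisfies \eqref{sg9}, so that the sum \eqref{sg7} is finite and the number of terms is bounded by a constant depending only on $\beta$ (and hence on $\alpha,\beta,\gamma$); this is exactly what is needed to absorb the combinatorial factors into the implicit constant in $\lesssim$.
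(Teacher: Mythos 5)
Your overall route coincides with the paper's: expand $(\Gamma_{yx}^*-\mathrm{id})_\beta^\gamma$ through the matrix form \eqref{sg7} of the exponential formula, use the effective finiteness of the sum, bound each factor $\pi_{yx\,\beta_i}^{(\n_i)}$ by \eqref{ap2}, and let the exponents telescope via Lemma \ref{lemder}. The bookkeeping you worry about is in fact the routine part: summing the two identities in \eqref{sg8} over a decomposition $\beta_1+\cdots+\beta_{l+1}=\beta$ gives $([\beta_1]+1)+\cdots+([\beta_l]+1)+[\gamma]=[\beta]$ and $(|\beta_1|_p+|\n_1|)+\cdots+(|\beta_l|_p+|\n_l|)+|\gamma|_p=|\beta|_p$, and with \eqref{hom7} these convert the product of the \eqref{ap2}-bounds into exactly $N_0^{[\beta]-[\gamma]}|y-x|^{|\beta|-|\gamma|}$.

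The genuine gap is the step you leave as an expectation: that every $\beta_i$ contributing to \eqref{sg7} satisfies $\beta_i\prec\beta$, so that the hypothesis $\eqref{ap2}_{\prec\beta}$ really applies to each factor. Neither the triangularity \eqref{sg5} with \eqref{hom4} nor \eqref{norm03} delivers this: \eqref{norm03} compares $\gamma$ with $\beta$, not the $\beta_i$'s with $\beta$, and its proof only controls the differences $|\beta_i|_\prec-|\n_i|$, which do not bound $|\beta_i|_\prec$ by $|\beta|_\prec$. Indeed, without using that $\gamma$ is not purely polynomial the claim is false: for $\gamma=e_\n$ the only contribution has $l=1$, $\beta_1=\beta$, $\beta_{l+1}=0$, with coefficient $\pi_{yx\,\beta}^{(\n)}$, which would require $\eqref{ap2}_\beta$ itself (this is exactly why that case is deferred to Lemma \ref{lemalgII}, after the three-point argument). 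The correct argument uses the assumption on $\gamma$ quantitatively: since $\gamma$ is populated and not purely polynomial, $[\gamma]\geq 0$; since $[\cdot]+1\geq 0$ on populated multi-indices, the first identity above gives $[\beta_i]+1\leq[\beta]$; since $|\cdot|_p\geq 0$, the second gives $|\beta_i|_p\leq|\beta|_p$; combined with $\beta_i(0)\leq\beta(0)$, \eqref{hom7} and \eqref{LOord01} this yields $|\beta_i|_\prec\leq\alpha[\beta]+|\beta|_p+\lambda\beta(0)=|\beta|_\prec-\alpha<|\beta|_\prec$. So the restriction to not purely polynomial $\gamma$ is not, as you suggest, about avoiding the polynomial-sector formula \eqref{cw01}; it is precisely the input that makes the induction close.
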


\begin{lemma}\label{lemrhs}(Reconstruction)
Let $\beta$ be not purely polynomial. 
Suppose $\eqref{ap1}_{\prec \beta}$ and $\eqref{ap3}_{\prec \beta}$ hold, 
and that $\eqref{ap3}_\beta^\gamma$ holds for all not purely polynomial $\gamma$. 
Then $\eqref{ap1min}_\beta$ holds.
\end{lemma}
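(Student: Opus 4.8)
The plan is to estimate $\Pi^-_{x\beta t}(y)$ by re-centering the whole expression at the spatial point $y$, which produces one \emph{diagonal} term $\Pi^-_{y\beta t}(y)$ and a sum of \emph{off-diagonal} terms weighted by entries of $\Gamma^*_{xy}$. First, the case $\beta=0$ is immediate: by \eqref{fm4} $\Pi^-_{x0}=\xi$, so $\Pi^-_{x0t}(y)=\xi_t(y)$ and $\eqref{ap1min}_0$ is nothing but \eqref{normxi} (using $|0|=\alpha$, $[0]=0$). So assume $\beta\neq 0$; since $\beta$ is populated and not purely polynomial, $[\beta]\geq 0$, hence $|\beta|\geq\alpha$ by \eqref{hom7} and \eqref{hom1}. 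By the component-wise form of \eqref{fm4},
\begin{equation*}
\Pi^-_{x\beta}=\sum_{k\geq 0}\ \sum_{e_k+\beta_1+\dots+\beta_{k+1}=\beta}\Pi_{x\beta_1}\cdots\Pi_{x\beta_k}\,\partial_1^2\Pi_{x\beta_{k+1}},
\end{equation*}
a finite sum all of whose $\beta_i$ satisfy $\beta_i\prec\beta$ by \eqref{norm02}; $\Pi^-_{y\beta}$ is given by the same formula with $x$ replaced by $y$.

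For the re-centering I would use the identity $\Pi^-_x=\Gamma^*_{xy}\Pi^-_y$ (which follows exactly as in the proof of Lemma~\ref{lem:algebraic}: apply $F^*_x$ to the second line of \eqref{fm2}, use multiplicativity \eqref{cco5} and \eqref{fm17} to get $\mathbf{\Pi}^-_x=F^*_x\mathbf{\Pi}^-$, then invoke \eqref{*p.20} and restrict to the diagonal), together with the facts that $\Pi^-_{y\gamma}=0$ whenever $\gamma$ is purely polynomial and that the remaining $\gamma\neq\beta$ satisfy $\gamma\prec\beta$ and $|\gamma|<|\beta|$ by \eqref{norm03} and \eqref{sg5}. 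Convolving with $\psi_t$ and evaluating at $y$,
\begin{equation*}
\Pi^-_{x\beta t}(y)=\Pi^-_{y\beta t}(y)+\sum_{\substack{\gamma\prec\beta,\ |\gamma|<|\beta|\\ \gamma\ \text{not purely polynomial}}}(\Gamma^*_{xy})^\gamma_\beta\,\Pi^-_{y\gamma t}(y).
\end{equation*}
The off-diagonal $\gamma$ are either populated (then $\eqref{ap3}_\beta^\gamma$, which holds for every pair of base points, gives $|(\Gamma^*_{xy})^\gamma_\beta|\lesssim N_0^{[\beta]-[\gamma]}|y-x|^{|\beta|-|\gamma|}$), or of the form \eqref{pc1} — then $[\gamma]=-1$, $\Pi^-_{y\gamma}$ is an explicit polynomial of parabolic degree $|\gamma|-2$ with $N_0$-independent coefficients (cf.\ Remark~\ref{rem:new}), and the same bound on $(\Gamma^*_{xy})^\gamma_\beta$ follows from the very inspection of \eqref{sg7} behind Lemma~\ref{lemalg}, which never uses that $\gamma$ is populated. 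In both cases, writing $\Pi^-_{y\gamma}=(\partial_2-\partial_1^2)\Pi_{y\gamma}$ (resp.\ using the polynomial form) and combining $\eqref{ap1}_{\prec\beta}$ with \eqref{ker02} for $|\n|=2$ yields $|\Pi^-_{y\gamma t}(y)|\lesssim N_0^{[\gamma]+1}(\sqrt[4]{t})^{|\gamma|-2}$. Since $N_0^{[\beta]-[\gamma]}N_0^{[\gamma]+1}=N_0^{[\beta]+1}$ and $\alpha\leq|\gamma|<|\beta|$ (so the two nonnegative exponents below add to $|\beta|-\alpha$),
\begin{equation*}
|y-x|^{|\beta|-|\gamma|}(\sqrt[4]{t})^{|\gamma|-2}=(\sqrt[4]{t})^{\alpha-2}\,|y-x|^{|\beta|-|\gamma|}(\sqrt[4]{t})^{|\gamma|-\alpha}\leq(\sqrt[4]{t})^{\alpha-2}(\sqrt[4]{t}+|y-x|)^{|\beta|-\alpha},
\end{equation*}
so the finitely many off-diagonal terms are within the bound of $\eqref{ap1min}_\beta$.

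It remains to treat the diagonal term $\Pi^-_{y\beta t}(y)$. Its $k=0$ pieces are single factors $\partial_1^2\Pi_{y,\beta-e_0}$; since $|\beta-e_0|=|\beta|$ and $[\beta-e_0]=[\beta]$, bounding $\int(\partial_1^2\psi_t)(y-z)\Pi_{y,\beta-e_0}(z)\,dz$ via $\eqref{ap1}_{\prec\beta}$ and \eqref{ker02} gives $\lesssim N_0^{[\beta]+1}(\sqrt[4]{t})^{|\beta|-2}\leq N_0^{[\beta]+1}(\sqrt[4]{t})^{\alpha-2}(\sqrt[4]{t}+|y-x|)^{|\beta|-\alpha}$. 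The genuinely new pieces are the products $g_1\cdots g_k\,h$ with $k\geq 1$, $g_i=\Pi_{y\beta_i}$ and $h=\partial_1^2\Pi_{y\beta_{k+1}}$. Here I would run a reconstruction (Littlewood--Paley) argument: from $\eqref{ap1}_{\prec\beta}$ and \eqref{ker02} each factor obeys scale-wise bounds
\begin{equation*}
|(g_i)_s(z)|\lesssim N_0^{[\beta_i]+1}\big((\sqrt[4]{s})^{|\beta_i|}+|z-y|^{|\beta_i|}\big),\qquad |h_s(z)|\lesssim N_0^{[\beta_{k+1}]+1}\big((\sqrt[4]{s})^{|\beta_{k+1}|-2}+(\sqrt[4]{s})^{-2}|z-y|^{|\beta_{k+1}|}\big),
\end{equation*}
and using the semigroup property \eqref{ud04} to telescope $g_1\cdots g_k\,h$ over dyadic scales $s\leq t$ and expanding the product block by block, the only delicate interaction is the resonant one of same-scale blocks, which converges because $g_1,\dots,g_k$ all carry positive homogeneity $|\beta_i|>0$ and, by \eqref{jan04}, the total exponent is $|\beta_1|+\dots+|\beta_{k+1}|-2=|\beta|-2>\alpha-2>-1$. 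This produces $|(g_1\cdots g_k\,h)_t(y)|\lesssim N_0^{[\beta]+1}(\sqrt[4]{t})^{|\beta|-2}$ (with $\sum_i([\beta_i]+1)=[\beta]+1$ by additivity from $e_k+\beta_1+\dots+\beta_{k+1}=\beta$), and again $(\sqrt[4]{t})^{|\beta|-2}\leq(\sqrt[4]{t})^{\alpha-2}(\sqrt[4]{t}+|y-x|)^{|\beta|-\alpha}$ since $|\beta|\geq\alpha$. Summing all contributions gives $\eqref{ap1min}_\beta$.

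The hard part is precisely this diagonal reconstruction of the $k\geq 1$ products: the derivative-bearing factor $h=\partial_1^2\Pi_{y\beta_{k+1}}$ has the possibly negative homogeneity $|\beta_{k+1}|-2$, so no pointwise bound on it is available at this stage — only the scale-wise bound above — and its product with the remaining (positive-homogeneity) factors must be controlled multiplicatively and at all scales at once, with careful tracking of the combinatorial constants so that only finitely many terms and a convergent dyadic sum occur; the hypothesis $\alpha>1$ is exactly what keeps that sum convergent. Everything else — the re-centering split, the off-diagonal estimate through $\eqref{ap3}_\beta^\gamma$, and the $\beta=0$ and $k=0$ pieces — is routine bookkeeping with \eqref{ker02}, $\eqref{ap1}_{\prec\beta}$, and $\eqref{ap3}_\beta^\gamma$.
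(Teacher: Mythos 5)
Your reduction step is fine in spirit (and the $\beta=0$, $k=0$ and off-diagonal bookkeeping is essentially correct), but it does not prove the lemma: after writing $\Pi^-_{x\beta t}(y)=\Pi^-_{y\beta t}(y)+\sum_\gamma(\Gamma^*_{xy})_\beta^\gamma\Pi^-_{y\gamma t}(y)$, the diagonal term $\Pi^-_{y\beta t}(y)$ \emph{is} exactly $\eqref{ap1min}_\beta$ at $x=y$, i.e.\ the whole analytic content of the statement, and your treatment of it is not an argument. Scale-wise bounds on the individual factors $(g_i)_s$ and $h_s$ do not control $(g_1\cdots g_k h)_t(y)$: a product is not determined scale by scale by its factors, and any paraproduct/telescoping decomposition requires commutator estimates, i.e.\ control of increments $\Pi_{y\beta_i}(z)-\Pi_{y\beta_i}(z')$ between \emph{arbitrary} pairs of points and a coherence estimate for the germ carrying the rough factor $\partial_1^2\Pi_{y\beta_{k+1}}$. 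The only pointwise information available at this stage is $|\Pi_{y\beta_i}(z)|\lesssim|z-y|^{|\beta_i|}$, which says nothing about increments away from $y$; those are obtained precisely by re-expanding around a second base point via \eqref{sg6}, \eqref{sg15} and the $\Gamma$-bounds — which is the paper's route: freeze the function factors at the base point to form the germ $F_{xz}$ in \eqref{ud14}, prove the size bound \eqref{nrc02} and the coherence \eqref{nrc04} (this is where \eqref{sg6}, \eqref{sg15}, $\eqref{ap3}$ and the $\eqref{nrc05}$-type mollified bound on $\partial_1^2\Pi$ enter), and then invoke the commutator/semigroup reconstruction Lemma \ref{lemrecon} with $\theta_1=\alpha-2$, $\theta_2=1$, so that the dyadic sum converges because $\theta_1+\theta_2=\alpha-1>0$. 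Your stated convergence criterion (``total exponent $|\beta|-2>-1$'') is not the operative mechanism, and no coherence input appears anywhere in your sketch; so the key step is missing, not merely compressed.

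Two smaller points. First, the entries $(\Gamma^*_{xy})_\beta^\gamma$ for $\gamma$ of the form \eqref{pc1} are \emph{not} covered by the hypothesis $\eqref{ap3}_\beta^\gamma$ (that estimate is only for populated $\gamma$), and re-running the argument of Lemma \ref{lemalg} for such $\gamma$ requires $\eqref{ap2}_{\prec\beta}$, which is not among the assumptions of Lemma \ref{lemrhs} (it is available in the global induction, but then you are proving a different lemma than the one stated); you would also need to check that the strict ordering $|\beta_i|_\prec<|\beta|_\prec$ survives $[\gamma]=-1$, which it does only because $|\gamma|_p>0$ for \eqref{pc1}-indices. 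Second, the identity $\Pi^-_x=\Gamma^*_{xy}\Pi^-_y$ lives on the larger space $V$, so it needs the invertibility of $F^*_y$ and the composition rule of Lemma \ref{lemcomp} extended from $\mathsf{T}^*$ to $V$ — plausible, but established neither in the paper nor by you. The paper's germ construction avoids both issues, since the only re-expansions it uses are \eqref{sg6} and \eqref{sg15} applied to $\Pi_x$ itself.
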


\begin{lemma}\label{lemlhs}(Integration)
Let $\beta$ be not purely polynomial.
Suppose $\eqref{ap1min}_\beta$ holds. Then $\eqref{ap1}_\beta$ holds.
\end{lemma}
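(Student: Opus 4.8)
The statement to prove is Lemma~\ref{lemlhs}: if $\beta$ is not purely polynomial and $\eqref{ap1min}_\beta$ holds, then $\eqref{ap1}_\beta$ holds. This is the ``integration'' step: we must upgrade a scale-by-scale bound on the right-hand side $\Pi^-_{x\beta}$ to a pointwise bound on $\Pi_{x\beta}$ itself, using the PDE $(\partial_2-\partial_1^2)\Pi_{x\beta}=\Pi^-_{x\beta}$ together with the vanishing conditions $\partial^{\mathbf n}\Pi_{x\beta}(x)=0$ for $|\mathbf n|<|\beta|$ and the growth bound $\limsup_{|y-x|\to\infty}|\Pi_{x\beta}(y)|/|y-x|^{|\beta|}<\infty$ from \eqref{fm4}.

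\textbf{The plan.} The plan is to prove a Schauder-type estimate: inverting $(\partial_2-\partial_1^2)$ against the semigroup kernel. Since $\partial_1^4-\partial_2^2=(\partial_2-\partial_1^2)^*(\partial_2-\partial_1^2)$ and $\psi_t$ is its heat kernel, one has a representation of the form $-(\partial_2-\partial_1^2)^{-1}=\int_0^\infty (\partial_2+\partial_1^2)\psi_t\,dt$ (up to the precise sign/adjoint bookkeeping, cf. the remark before Lemma~\ref{lem:int}). First I would write, for the periodic-coefficient part of $\Pi_{x\beta}$, the candidate formula $\Pi_{x\beta,t}(y)=-\int_0^t (\Pi^-_{x\beta})_s(y)\,ds$ integrated appropriately, or rather express $\Pi_{x\beta}$ via a telescoping in dyadic scales using the semigroup property \eqref{ud04}: $\Pi_{x\beta}-(\Pi_{x\beta})_T=\int_0^T \tfrac{d}{dt}(\Pi_{x\beta})_t\,dt$ and $\tfrac{d}{dt}(\cdot)_t$ applied to $(\partial_2-\partial_1^2)\Pi_{x\beta}$ brings in $(\Pi^-_{x\beta})_t$. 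Then I would split the integral at $t\sim|y-x|^4$ (the natural parabolic scale). On $0<t\lesssim|y-x|^4$, plug in $\eqref{ap1min}_\beta$ directly: $|(\Pi^-_{x\beta})_t(y)|\lesssim N_0^{[\beta]+1}(\sqrt[4]t)^{\alpha-2}(\sqrt[4]t+|y-x|)^{|\beta|-\alpha}$, and since $|\beta|-\alpha$ could have either sign but $|\beta|>0$, one integrates $\int_0^{|y-x|^4}(\sqrt[4]t)^{\alpha-2}|y-x|^{|\beta|-\alpha}\,dt/ t$-type expressions — the integrability at $t=0$ is guaranteed because $\alpha>0$ (here the hypothesis $\alpha-2>-2$, i.e. the exponent $\alpha$ on $\sqrt[4]t$ after the $dt$ gives $t^{\alpha/4-1}$ which is integrable since $\alpha>0$). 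This yields a contribution $\lesssim N_0^{[\beta]+1}|y-x|^{|\beta|}$. On $t\gtrsim|y-x|^4$, one instead uses the vanishing moments at $x$: subtract the Taylor polynomial of $\Pi_{x\beta}$ at $x$ up to order $<|\beta|$, equivalently write $\Pi_{x\beta}(y)-[\text{its Taylor poly}]$ and use the moment bounds \eqref{ker02} on $\partial^{\mathbf n}\psi_t$ to gain factors $(\sqrt[4]t)^{-|\mathbf n|}$, balanced against $(\sqrt[4]t)^{|\beta|-\alpha}$ from the far-field behaviour of $\Pi^-_{x\beta}$ (where $\sqrt[4]t$ dominates $|y-x|$); since $|\beta|$ is not an integer (this is exactly where $\alpha\notin\mathbb Q$ enters, cf. the discussion after Theorem~\ref{propanalyticbounds}), the exponent $|\beta|-\lceil|\beta|\rceil<0$ makes the integral $\int_{|y-x|^4}^\infty$ converge and produce again $|y-x|^{|\beta|}$. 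The polynomial-in-$y$ part of $\Pi_{x\beta}$ (the $\tilde{\mathbf\Pi}$-independent piece, i.e. the $\pi_{x\beta}^{(\mathbf n)}y^{\mathbf n}$ with $|\mathbf n|<|\beta|$, cf. \eqref{fm10} and \eqref{jan03}) is handled separately and trivially: those monomials of degree $<|\beta|$ contribute $O(|y-x|^{|\beta|})$ near and away from $x$ once the anchoring \eqref{bfp03} is used, but in fact the cleanest route is to note that $\eqref{fm4}$ already packages everything, so one works directly with the function $\Pi_{x\beta}$ satisfying \eqref{fm4} and only needs the growth bound to kill the freedom in the Liouville-type argument.

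\textbf{Uniqueness / Liouville input.} I would invoke the corollary characterizing $\Pi_x$ via \eqref{fm4} (and hence the uniqueness Lemma~\ref{lem:uni}): the candidate built from the integral representation solves the same PDE with the same vanishing and growth conditions, so it must equal $\Pi_{x\beta}$, and the bound transfers. Concretely, define $v(y):=-\int_0^\infty(\ldots)$ with the two-regime split and check (a) $(\partial_2-\partial_1^2)v=\Pi^-_{x\beta}$ modulo a polynomial of degree $<|\beta|-2$ (absorbed into $P_{x\beta}$, which we know vanishes for $[\beta]\ge 0$), (b) $\partial^{\mathbf n}v(x)=0$ for $|\mathbf n|<|\beta|$ by construction of the subtracted Taylor polynomial, (c) $|v(y)|\lesssim N_0^{[\beta]+1}|y-x|^{|\beta|}$ from the estimates above, which in particular gives the growth bound; then Lemma~\ref{lem:uni} forces $v=\Pi_{x\beta}$.

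\textbf{Main obstacle.} The hard part will be the bookkeeping in the far-field regime $t\gtrsim|y-x|^4$: correctly identifying how many derivatives $|\mathbf n|$ to spend (i.e. which Taylor polynomial to subtract) so that the power of $\sqrt[4]t$ coming from $(\sqrt[4]t)^{-|\mathbf n|}\cdot(\sqrt[4]t)^{|\beta|-\alpha+\alpha-2}=(\sqrt[4]t)^{|\beta|-|\mathbf n|-2}$ is strictly negative after accounting for the extra $dt=4(\sqrt[4]t)^3 d\sqrt[4]t$, i.e. one needs $|\beta|-|\mathbf n|<0$ with $|\mathbf n|$ the smallest integer exceeding $|\beta|$ — this is precisely why $|\beta|\notin\mathbb N$ is needed, so that such an integer strictly between the needed thresholds exists and the inequality is strict. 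A secondary subtlety is that $|\beta|-\alpha$ in $\eqref{ap1min}_\beta$ can be negative (when $|\beta|<\alpha$, e.g. $\beta=0$ with $|0|=\alpha$ borderline, or small $\beta$), so in the near-field one must be careful that $(\sqrt[4]t+|y-x|)^{|\beta|-\alpha}\le|y-x|^{|\beta|-\alpha}$ only when the exponent is negative and conversely; handling both signs uniformly via $(\sqrt[4]t+|y-x|)^{|\beta|-\alpha}$ and monotone estimates is routine but needs care. Finally, one must confirm the integral representation of $(\partial_2-\partial_1^2)^{-1}$ is licit on the relevant function class — this is the content of the referenced Lemma~\ref{lem:int}, which I would simply cite.
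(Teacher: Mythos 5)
Your proposal is correct and follows essentially the same route as the paper: the paper's proof of Lemma \ref{lemlhs} is precisely an application of Lemma \ref{lem:int} with $\eta=|\beta|$, $\theta=\alpha$ and $g=N_0^{-([\beta]+1)}\Pi^-_{x\beta}$, followed by the identification $u=N_0^{-([\beta]+1)}\Pi_{x\beta}$ via the uniqueness Lemma \ref{lem:uni} (using the growth and vanishing conditions from \eqref{fm4}), exactly as you describe. Your near-field/far-field split with Taylor subtraction of order $\lfloor|\beta|\rfloor$ and the role of $\alpha\notin\Q$ is just a sketch of the paper's proof of Lemma \ref{lem:int}, which you correctly note can simply be cited.
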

Note that $\eqref{ap1}_\beta$ for $\beta$ purely polynomial follows from the third item in \eqref{fm4}, since then $[\beta]+1=0$, so a proof is not required.
\begin{lemma}\label{lemtpa}(Three-point argument)
Suppose $\eqref{ap1}_{\preceq \beta}$ holds, 
and that $\eqref{ap3}_\beta^\gamma$ holds for all not purely polynomial $\gamma$. 
Then $\eqref{ap2}_\beta$ holds.
\end{lemma}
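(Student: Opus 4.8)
The plan is to exploit the two algebraic identities \eqref{sg6} and \eqref{sg15}, which together express $\pi_{yx}^{(\0)}$ in terms of the centered model, and to combine this with the already-available bound \eqref{ap1} at the current stage of the induction. Concretely, \eqref{sg15} reads $\pi_{yx\beta}^{(\0)}=\Pi_{y\beta}(x)$, but this alone is not enough: we need a bound at a \emph{third} point, which is where the name of the lemma comes from. The idea is to re-expand $\Pi_{y\beta}(x)$ around $x$ using \eqref{sg6} in the form $\Pi_{y}=\Gamma_{yx}^*\Pi_{x}+\pi_{yx}^{(\0)}$, i.e.
\begin{equation*}
\Pi_{y\beta}(x)=\big(\Gamma_{yx}^*\Pi_{x}(x)\big)_\beta+\pi_{yx\beta}^{(\0)},
\end{equation*}
and exploit that $\Pi_{x\beta'}(x)=0$ for all not purely polynomial $\beta'$ (fourth item in \eqref{fm4}, since $|\n|<|\beta'|$ holds for $\n=\0$ as all homogeneities are $>0$ by \eqref{hom1}) while $\Pi_{xe_{\n'}}(x)=(x-x)^{\n'}=0$ for all $\n'\neq\0$. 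Hence $\Pi_x(x)$ has only the $\beta'=0$ component surviving; but $|0|=\alpha>0$ so even that vanishes. Thus $\Pi_x(x)=0$ entirely, which would give the circular $\pi_{yx\beta}^{(\0)}=\Pi_{y\beta}(x)=\pi_{yx\beta}^{(\0)}$. The resolution is to introduce a genuine third point $z$: write, using transitivity \eqref{sg12} and \eqref{sg6} twice,
\begin{equation*}
\pi_{yx}^{(\0)}=\Pi_y(x)\overset{\eqref{sg6}}{=}\Gamma_{yz}^*\Pi_z(x)+\pi_{yz}^{(\0)},
\end{equation*}
and then choose $z$ on the segment between $x$ and $y$ so that $|z-x|$ and $|y-z|$ are both comparable to $|y-x|$, iterating or telescoping if necessary to reach the base of the induction.

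The cleaner route, which I would actually carry out, is to telescope along a dyadic chain. Pick a sequence $x=x_0,x_1,\ldots,x_J=y$ with $|x_{j+1}-x_j|\sim 2^{-j}|y-x|$ (or simply the two-point split if one is careful), and use $\Pi_y=\Gamma_{yx_j}^*\Pi_{x_j}+\pi_{yx_j}^{(\0)}$ together with the cocycle relation to write $\pi_{yx\beta}^{(\0)}$ as a sum of increments each of which is controlled by $|(\Gamma_{x_{j+1}x_j}^*-\mathrm{id})_\beta^\gamma|$ times $|\Pi_{x_{j}\gamma}(x_{j+1})|$ for not purely polynomial $\gamma$ with $|\gamma|<|\beta|$, plus a purely polynomial remainder. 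For the not purely polynomial $\gamma$ we invoke $\eqref{ap3}_\beta^\gamma$ (assumed, for not purely polynomial $\gamma$) to bound the matrix entry by $N_0^{[\beta]-[\gamma]}|x_{j+1}-x_j|^{|\beta|-|\gamma|}$, and $\eqref{ap1}_{\preceq\beta}$ applied to the populated $\gamma$ (note $\gamma\prec\beta$ by \eqref{sg5}, \eqref{norm03}, so it is covered, and in fact $\gamma\preceq\beta$ suffices) to bound $|\Pi_{x_j\gamma}(x_{j+1})|$ by $N_0^{[\gamma]+1}|x_{j+1}-x_j|^{|\gamma|}$. Multiplying and using $[\beta]-[\gamma]+[\gamma]+1=[\beta]+1$ and $(|\beta|-|\gamma|)+|\gamma|=|\beta|$, each increment is $\lesssim N_0^{[\beta]+1}|x_{j+1}-x_j|^{|\beta|}\sim N_0^{[\beta]+1}(2^{-j}|y-x|)^{|\beta|}$, and since $|\beta|>0$ the geometric series over $j$ sums to $\lesssim N_0^{[\beta]+1}|y-x|^{|\beta|}$. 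Extracting the $\n$-dependence: $\pi_{yx\beta}^{(\n)}$ for $\n\neq\0$ is, by \eqref{sg9}-type triangularity and the explicit relation $\{\pi_{yx}^{(\n)}\}=\{\pi_y^{(\n)}-F_y^*F_x^{*-1}\pi_x^{(\n)}\}$, handled by the purely polynomial part of the same telescoping, where \eqref{cw01}/\eqref{sg20} give the binomial structure with shift $y-x$ and hence the factor $|y-x|^{|\beta|-|\n|}$; alternatively one differentiates \eqref{sg6} in the base point, using that $D^{\n}$ acting on $(y-x)$-polynomials produces exactly the missing powers.

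The main obstacle I anticipate is organizing the telescoping so that the purely polynomial contributions — which are precisely the ones \emph{not} covered by the hypothesis $\eqref{ap3}_\beta^\gamma$ (that hypothesis is only for not purely polynomial $\gamma$) — do not spoil the estimate. These purely polynomial $\gamma=e_{\n}$ have $|\gamma|=|\n|$ and correspond to the coefficients $\pi_{xy}^{(\n)}$ themselves, so one is in danger of circularity. The way out is that, for purely polynomial $\gamma$, the matrix entry $(\Gamma_{x_{j+1}x_j}^*)_{\beta}^{e_{\n}}$ is governed by $\pi_{x_{j+1}x_j\beta}^{(\n)}$, and one sets up the induction so these are handled in the \emph{same} step but ordered so that the $\n=\0$ case (which is $\eqref{ap2}_\beta$ for $\n=\0$, obtained first via the three-point telescoping using only not purely polynomial intermediate $\gamma$'s and $\eqref{ap1}_{\preceq\beta}$) feeds into the $\n\neq\0$ case; the footnote's remark that $|\cdot|_\prec$ does control things precisely because $|\cdot|_\prec\geq|\cdot|$ and the population condition keep the exponents $|\beta|-|\n|>0$ strictly positive is what makes the geometric sum converge. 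Once the $\n=\0$ bound \eqref{ap2} is in hand, the $\n\neq\0$ bounds follow by applying $\frac{1}{\n!}\partial_z^{\n}$ to the identity $\pi_{yz}^{(\0)}=\Pi_y(z)$ — legitimate since $\Pi_{y\beta}$ is smooth — evaluating at $z=x$, and re-expanding $\Pi_y$ around $x$ via \eqref{sg6} exactly as above, the $\n$-th derivative of the polynomial part producing the announced homogeneity $|\beta|-|\n|$.
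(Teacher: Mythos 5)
There is a genuine gap, and it sits exactly where the lemma has its content, namely at $\n\neq\0$. The easy part you actually over-complicate: for $\n=\0$ no telescoping is needed, since \eqref{sg15} gives $\pi_{yx\beta}^{(\0)}=\Pi_{y\beta}(x)$ and $\eqref{ap1}_{\beta}$ bounds this directly by $N_0^{[\beta]+1}|y-x|^{|\beta|}$. For $\n\neq\0$ your final step rests on the identity $\pi_{yx\beta}^{(\n)}=\frac{1}{\n!}\partial^{\n}\Pi_{y\beta}(x)$, obtained by ``applying $\frac{1}{\n!}\partial_z^{\n}$ to $\pi_{yz}^{(\0)}=\Pi_y(z)$''. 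This identity is nowhere established and is false in general: the $\pi_{yx}^{(\n)}$, $\n\neq\0$, are defined through the structure group, cf.\ \eqref{*p.20} and \eqref{sg2}, equivalently through the coefficients of the second (polynomial) variable in the two-variable object, cf.\ \eqref{fm10}; they are not Taylor coefficients of the diagonal restriction $\Pi_y$. Indeed, if you differentiate the re-expansion $\Pi_y(z)=\Gamma_{yx}^*\Pi_x(z)+\pi_{yx}^{(\0)}$ at $z=x$, then besides $\pi_{yx\beta}^{(\n)}$ you pick up the terms $(\Gamma_{yx}^*)_\beta^\gamma\frac{1}{\n!}\partial^{\n}\Pi_{x\gamma}(x)$ for non-purely-polynomial $\gamma$ with $|\gamma|\le|\n|$, which do not vanish under \eqref{fm4} and are not controlled by the hypotheses; moreover $\eqref{ap1}$ is a pointwise bound on $\Pi_{y\beta}$ and cannot be differentiated, so even the term you keep is not estimated. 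Your fallback, the dyadic telescoping, founders on precisely the circularity you flag: by \eqref{sg2} the purely polynomial columns $(\Gamma^*_{x_{j+1}x_j}-\mathrm{id})_\beta^{e_{\n}}$ \emph{are} the quantities $\pi^{(\n)}_{x_{j+1}x_j\beta}$ of type \eqref{ap2} (this is the content of Lemma \ref{lemalgII}), and ``ordering the induction so that $\n=\0$ feeds $\n\neq\0$'' is not an argument, since the $\n\neq\0$ step is the flawed differentiation above.

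What is missing is the actual three-point mechanism. From \eqref{sg6} and \eqref{sg15} one gets, for three points $x,y,z$,
\begin{equation*}
\Pi_{x\beta}(z)-\Pi_{x\beta}(y)-\Pi_{y\beta}(z)
=\sum_{\gamma\ \text{not p.p.}}(\Gamma_{xy}^*-\mathrm{id})_\beta^\gamma\,\Pi_{y\gamma}(z)
+\sum_{\n\neq\0}\pi_{xy\beta}^{(\n)}(z-y)^{\n},
\end{equation*}
where the purely polynomial contribution is \emph{not} estimated via \eqref{ap3} but kept explicit via \eqref{sg2}. The left-hand side and the first sum are bounded by $\eqref{ap1}_{\preceq\beta}$ (note $\gamma\prec\beta$ by \eqref{norm03}) and by the assumed $\eqref{ap3}_\beta^\gamma$ for non-purely-polynomial $\gamma$, giving $\bigl|\sum_{\n\neq\0}\pi_{xy\beta}^{(\n)}(z-y)^{\n}\bigr|\lesssim N_0^{[\beta]+1}(|y-x|^{|\beta|}+|z-y|^{|\beta|})$ uniformly in $z$. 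Then comes the step your proposal lacks: writing $z-y=|y-x|\hat z$, this is a uniform bound over $|\hat z|\le1$ on a polynomial in $\hat z$ of degree $<|\beta|$ (by \eqref{sg9}), and equivalence of norms on this finite-dimensional space of polynomials converts it into the coefficient bounds $|\pi_{xy\beta}^{(\n)}|\lesssim N_0^{[\beta]+1}|y-x|^{|\beta|-|\n|}$, i.e.\ $\eqref{ap2}_\beta$ for all $\n\neq\0$ at once — no telescoping, no differentiation, and no circular use of purely polynomial columns.
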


\begin{lemma}\label{lemalgII}
Suppose $\eqref{ap2}_{\beta}$ holds. 
Then $\eqref{ap3}_\beta^\gamma$ holds for all purely polynomial $\gamma$.
\end{lemma}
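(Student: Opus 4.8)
The plan is to reduce $\eqref{ap3}_\beta^\gamma$ for purely polynomial $\gamma$ to $\eqref{ap2}_\beta$ by the purely algebraic identification of the relevant matrix entry of $\Gamma_{yx}^*$ with the corresponding character; once this is in place, the two estimates coincide after unwinding the definitions of $[\cdot]$ and $|\cdot|$, and essentially nothing remains to be done.

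First I would recall that $\Gamma_{yx}^*\in\mathsf{G}^*$ and, by Lemma \ref{lemcomp} (as noted right after \eqref{*p.20}), is given by the exponential-type formula \eqref{sg3} with the generators $\{\pi_{yx}^{(\n)}\}_\n$. I then want to compute, for a populated $\beta$ and $\n\neq\0$, the matrix entry $(\Gamma_{yx}^*-\textnormal{id})_\beta^{e_\n}$ from the component-wise representation \eqref{sg7}. This is exactly the computation already carried out in the proof of \eqref{cco7} for purely polynomial targets: a summand indexed by $l$ and $\n_1,\dots,\n_l$ can be non-zero only if $(D^{(\n_1)}\cdots D^{(\n_l)})_{\beta_{l+1}}^{e_\n}\neq 0$ for some decomposition $\beta_1+\dots+\beta_{l+1}=\beta$, and inspecting the matrix representations \eqref{der2} and \eqref{der3} forces $l=1$, $\n_1=\n$ and $\beta_{l+1}=0$, whence, using $(D^{(\n)})_0^{e_\n}=1$, the only surviving contribution is $\pi_{yx\beta}^{(\n)}$. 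This yields the identity
\begin{equation*}
(\Gamma_{yx}^*-\textnormal{id})_\beta^{e_\n}=\pi_{yx\beta}^{(\n)}\quad\text{for all populated }\beta\text{ and all }\n\neq\0 .
\end{equation*}

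With this identity at hand the conclusion is immediate. A purely polynomial $\gamma$ is of the form $\gamma=e_\n$ for some $\n\neq\0$, and by \eqref{pop12} and \eqref{hom2} it satisfies $[\gamma]=-1$ and $|\gamma|=|\n|$. Hence for a populated pair $(e_\n,\beta)$ with $|\n|=|e_\n|<|\beta|$, the bound $\eqref{ap3}_\beta^{e_\n}$ reads $|\pi_{yx\beta}^{(\n)}|\lesssim N_0^{[\beta]+1}|y-x|^{|\beta|-|\n|}$, which is precisely $\eqref{ap2}_\beta$ for those $\n\neq\0$ with $|\n|<|\beta|$; for $|\n|\geq|\beta|$ the entry vanishes by \eqref{sg9} anyway, and such $\gamma$ are excluded from $\eqref{ap3}$. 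Thus $\eqref{ap2}_\beta$ gives $\eqref{ap3}_\beta^\gamma$ for every purely polynomial $\gamma$. I do not expect any genuine obstacle: the entire content is the bookkeeping identity for $(\Gamma_{yx}^*-\textnormal{id})_\beta^{e_\n}$ together with the matching of exponents through $[e_\n]=-1$ and $|e_\n|=|\n|$.
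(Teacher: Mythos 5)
Your proof is correct and takes essentially the same route as the paper: the paper's one-line proof reads the identity $(\Gamma_{yx}^*-\textnormal{id})_\beta^{e_{\mathbf{n}}}=\pi_{yx\beta}^{(\mathbf{n})}$ directly off formula \eqref{sg2}, whereas you recompute it from the exponential representation \eqref{sg3}/\eqref{sg7} --- the same bookkeeping already carried out in the paper's proof of \eqref{cco7}. The matching of exponents via $[e_{\mathbf{n}}]=-1$ and $|e_{\mathbf{n}}|=|\mathbf{n}|$ then identifies $\eqref{ap2}_\beta$ with $\eqref{ap3}_\beta^\gamma$ for purely polynomial $\gamma$, exactly as you conclude.
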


The last lemma is immediate from formula \eqref{sg2}.


\subsection{Algebraic argument (proof of Lemma \ref{lemalg})}
\mbox{}

\begin{proof}[Proof of Lemma \ref{lemalg}]
The argument relies on the exponential formula \eqref{sg3}. Fixing a $\gamma$ not purely polynomial,
we first show that the multi-indices in the component-wise version \eqref{sg7} of \eqref{sg3} 
only contribute when
\begin{align}\label{ud01}
|\beta_1|_\prec,...,|\beta_l|_\prec< |\beta|_\prec.
\end{align}
Indeed, by the additivity of $[\cdot]$ and the first item in \eqref{sg8} we have
\begin{align}\label{ud02}
([\beta_1]+1)+...+([\beta_l]+1)+[\gamma]=[\beta].
\end{align}
Since the populated $\gamma$ is not purely polynomial, we have $[\gamma]\geq 0$.
Because $[\cdot]+1$ is non-negative on populated multi-indices, 
\eqref{ud02} implies $[\beta_1]+1,\cdots,[\beta_l]+1\le[\beta]$.
%
Furthermore, by additivity of $\pol{\cdot}$ and the second item in \eqref{sg8} we have
\begin{align}\label{ud03}
(\pol{\beta_1}+|{\bf n}_1|)+\cdots+(\pol{\beta_l}+|{\bf n}_l|)+\pol{\gamma}=\pol{\beta}.
\end{align}
Because of $\pol{\cdot}\geq 0$, this implies
$\pol{\beta_1},...,\pol{\beta_l}\leq \pol{\beta}$.
Combined with the obvious $\beta_1(0),...,$ $\beta_l(0)$ $\leq$ $\beta(0)$, 
by \eqref{hom7} and definition \eqref{LOord01},
the two inequalities yield \eqref{ud01}.

\medskip

We now turn to the estimate \eqref{ap3}, based on \eqref{sg7}. We recall that the
sum over $l$, ${\bf n}_1,\cdots,{\bf n}_l$, and $\beta_1,\cdots,\beta_{l+1}$ is effectively
finite so that suffices to estimate the products $\pi_{\beta_1}^{({\bf n}_1)},\cdots,
\pi_{\beta_l}^{({\bf n}_l)}$. According to \eqref{ud01}, 
the induction hypothesis $\eqref{ap2}_{\prec\beta}$ is sufficient, and yields an estimate by
\begin{equation*}
N_0^{([\beta_1]+1)+\cdots+([\beta_l]+1)}
|y-x|^{(|\beta_1|-|\n_1|) +... + (|\beta_l|-|\n_l|)}. 
\end{equation*} 
By \eqref{ud02}, the exponent of $N_0$ assumes the desired value $[\beta]-[\gamma]$.
Likewise by \eqref{ud03} and \eqref{hom7}, the exponent of $|y-x|$ assumes
the desired value $|\beta|-|\gamma|$.
\end{proof}

\subsection{Reconstruction (proof of Lemma \ref{lemrhs})}
\mbox{}

Lemma \ref{lemrhs} relies on the following general regular reconstruction result.
In reconstruction, one is given a family $\{F_z\}$ of space-time Schwartz distributions,
indexed by a space-time point $z$, and the task is to construct a single distribution $F$
that is close to $F_z$ near $z$, which is possible provided the ``germs'' $F_z$ satisfy
a suitable continuity condition in $z$. The latter is best expressed in terms of its
convolution $F_{zt}$ in the active variable, see \eqref{rc02}. In our application, it suffices to
consider distributions that are actually continuous space-time functions, so that the task amounts 
to show that a mollification in the active variable, namely $F_{zt}(z)$, is close to the
``diagonal'' $F_z(z)$, or rather the mollification  $\int dy\psi_t(z-y)F_y(y)$ of the diagonal,
see \eqref{rc03}. In our application, the family $\{F_{xz}\}$
is indexed by second space-time (base) point $x$, which influences the topology of continuity
in $z$, see the r.~h.~s.~of \eqref{rc02}, and of \eqref{rc03}.
\begin{lemma}\label{lemrecon}
Suppose that for some exponents $\theta_1$, $\theta_2$, $\theta_3$ 
with $\theta_1+\theta_2 >0$ and $\theta_2,\theta_3\geq 0$ 
	\begin{equation}\label{rc02}
		|F_{xzt}(z) - F_{xyt}(z)| 
\leq (\sqrt[4]{t})^{\theta_1} (\sqrt[4]{t} + |y-z|)^{\theta_2} 
(\sqrt[4]{t} + |y-z| + |z-x|)^{\theta_3}.
	\end{equation}
Then
	\begin{equation}\label{rc03}
		|F_{xzt}(z) - \int dy \psi_t(z-y) F_{xy}(y)|\lesssim (\sqrt[4]{t})^{\theta_1 + \theta_2} (\sqrt[4]{t} + |z-x|)^{\theta_3}.
	\end{equation}
\end{lemma}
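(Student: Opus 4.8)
The plan is to exploit the semi-group property \eqref{ud04} together with a standard telescoping/dyadic argument in the scale parameter, combined with the moment bound \eqref{ker02}, to convert the incremental hypothesis \eqref{rc02} into the diagonal estimate \eqref{rc03}. The key observation is that $\int dy\,\psi_t(z-y)F_{xy}(y)$ is the mollification at scale $t$ of the ``diagonal'' germ $y\mapsto F_{xy}(y)$, so the difference in \eqref{rc03} measures how much the germ $F_{xz}$ at the base point $z$ differs from the averaged germ. First I would write $F_{xzt}(z)-\int dy\,\psi_t(z-y)F_{xy}(y)=\int dy\,\psi_t(z-y)\big(F_{xzt}(z)-F_{xy}(y)\big)$, but since $F_{xy}(y)$ is evaluated at $y$ rather than $z$, I need an additional layer; the cleaner route is the classical Hairer-type argument using a dyadic decomposition of scales below $t$.

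Concretely, I would use the semi-group property to write, for $s\le t$, the telescoping identity relating $\int dy\,\psi_t(z-y)F_{xy}(y)$ to $\int dy\,\psi_s(z-y)F_{xy}(y)$: applying $(\cdot)_{t-s}$ and \eqref{ud04}, one has $\int dy\,\psi_t(z-y)F_{xy}(y)=\int dy\,\psi_{t-s}(z-y)\big(\int dy'\,\psi_s(y-y')F_{xy'}(y')\big)$. Then I would compare, at scale $s$, the quantity $\int dy'\,\psi_s(y-y')F_{xy'}(y')$ with $F_{xyt'}(y)$ for appropriate $t'$, and ultimately trade everything against $F_{xzt}(z)$ by repeatedly invoking \eqref{rc02}. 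The role of \eqref{rc02} is: whenever I replace a germ indexed by one point by a germ indexed by a nearby point (at distance comparable to the current scale), I pick up a factor $(\sqrt[4]{s})^{\theta_1}(\sqrt[4]{s}+\ldots)^{\theta_2}(\ldots)^{\theta_3}$. Summing the geometric series in $s=2^{-j}$ down from $s\sim t$ converges precisely because $\theta_1+\theta_2>0$, producing the prefactor $(\sqrt[4]{t})^{\theta_1+\theta_2}$; the factors $\sqrt[4]{s}+|z-x|\le\sqrt[4]{t}+|z-x|$ and $\theta_2,\theta_3\ge 0$ ensure the remaining powers are controlled by $(\sqrt[4]{t}+|z-x|)^{\theta_3}$ uniformly, using $\theta_2\ge0$ to absorb the intermediate $|y-z|\lesssim\sqrt[4]{s}$ contributions into $(\sqrt[4]{s})^{\theta_2}$.

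The main obstacle I anticipate is the bookkeeping of the three different exponents and the triangle-inequality juggling: in \eqref{rc02} the middle factor involves $|y-z|$ and the last involves $|y-z|+|z-x|$, and after one mollification step the ``active'' point $z$ of $F_{xzt}$ gets replaced by an integration variable $y$ with $|y-z|\lesssim\sqrt[4]{t}$, so I must carefully check that $\sqrt[4]{t}+|y-z|\sim\sqrt[4]{t}$ and $\sqrt[4]{t}+|y-z|+|z-x|\sim\sqrt[4]{t}+|z-x|$ up to constants at each scale. The convergence of the dyadic sum hinges only on $\theta_1+\theta_2>0$ (the exponent of $\sqrt[4]{s}$ in the per-scale bound), while $\theta_2\ge0$ and $\theta_3\ge0$ guarantee monotonicity so that the surviving powers land on $(\sqrt[4]{t}+|z-x|)^{\theta_3}$ rather than something larger. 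I would also need to handle the limit $s\to 0$: as $s\downarrow 0$, $\int dy'\,\psi_s(z-y')F_{xy'}(y')\to F_{xz}(z)$ by continuity of the germ and of $z\mapsto F_{xz}$, which closes the telescoping. Once the per-scale estimate is in hand, the proof is a short geometric-series computation; the conceptual content is entirely in setting up the correct telescoping identity via \eqref{ud04} and \eqref{ker02}.
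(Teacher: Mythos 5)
Your plan is correct and follows essentially the same route as the paper: a dyadic telescoping in the scale parameter based on the semi-group property \eqref{ud04}, a per-scale estimate of the ``replace the germ's base point'' error via \eqref{rc02} and the moment bounds \eqref{ker02}, the closing of the telescoping as $s\downarrow 0$ by qualitative continuity, and a geometric sum dominated by the largest scale thanks to $\theta_1+\theta_2>0$. The paper merely packages the per-scale error as the commutator $[(\cdot)_s,E]$ of convolution with the diagonal evaluation $(EF_x)(y)=F_{xy}(y)$, which is exactly the quantity your sketch describes in words.
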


\begin{proof}[Proof of Lemma \ref{lemrhs}]
We start by getting rid of a couple of simple cases: For $\beta=0$
we have $\Pi^{-}_{x\beta}=\xi$ so that \eqref{ap1min} coincides with our assumption
\eqref{normxi} since $[\beta]=0$ and $|\beta|=\alpha$. For $\beta\in\mathbb{N}e_0$, we have 
$\Pi^{-}_{x\beta}=\partial_1^2\Pi_{x\beta-e_0}$. Since $\beta'=\beta-e_0\prec\beta$,
the induction hypothesis allows us to use \eqref{nrc05} below, which turns into
\eqref{ap1min} since $[\beta']=[\beta]$ and $|\beta'|=|\beta|$. 

\medskip

For the remaining indices $\beta$, we have $\beta(k)\not=0$ for some $k\ge 1$
and thus $|\beta|\ge 2\alpha$, which we use in form of
\begin{align}\label{ud05}
|\beta|\ge\alpha+1.
\end{align}
We now consider 
\begin{align}\label{ud14}
F_{xz}:= (\sum_k \z_k \Pi_x^k (z) \partial_1^2 \Pi_x)_\beta,
\end{align}
which is made such that 
%
\begin{equation}\label{nrc01}
F_{xy}(y)=\Pi_{x\beta}^{-}(y),
\end{equation}
see \eqref{fm4}.
Based on \eqref{ud14}, we shall establish
\begin{align}
	|F_{xzt}(z)| &\lesssim N_0^{[\beta]+1} (\sqrt[4]{t})^{\alpha-2} 
(\sqrt[4]{t} + |z-x|)^{|\beta|-\alpha},\label{nrc02}\\
        |F_{xzt}(z) - F_{xyt}(z)|&\lesssim N_0^{[\beta]+1} (\sqrt[4]{t})^{\alpha-2} 
(\sqrt[4]{t} + |y-z|) (\sqrt[4]{t} + |y-z| + |z-x|)^{|\beta|-\alpha-1}.\label{nrc04}
\end{align}
Since $\alpha>1$ and by \eqref{ud05}, $N_0^{-([\beta]+1)}F_{xz}$ meets the assumption \eqref{rc02}
of Lemma \ref{lemrecon}. In view of \eqref{nrc01}, the output \eqref{rc03} assumes the form of
\begin{align}
	|F_{xzt}(z) - \Pi_{x\beta t}^{-}(z)|
&\lesssim N_0^{[\beta]+1} (\sqrt[4]{t})^{\alpha -1} 
(\sqrt[4]{t} + |z-x|)^{|\beta|-\alpha-1}.\label{nrc03}
\end{align}
Combining \eqref{nrc02} with \eqref{nrc03} yields the desired \eqref{ap1min}. 


\medskip

\underline{Step 1.} Proof of 
\begin{equation}\label{nrc05}
\eqref{ap1}_{\preceq \beta'}\mbox{and}\
\eqref{ap3}_{\beta'}\;\implies\;|\partial_1^2 \Pi_{x\beta' t}(z)| 
\lesssim N_0^{[\beta']+1}(\sqrt[4]{t})^{\alpha-2} (\sqrt[4]{t} + |z-x|)^{|\beta'|-\alpha}. 
\end{equation}
Applying $\partial_1^2$ to \eqref{sg6} we obtain
\begin{equation}\label{ud07}
	\partial_1^2 \Pi_{x\beta' t}(z) = \sum_{\gamma'} (\Gamma_{xz}^*)_{\beta'}^{\gamma'}
\partial_1^2\Pi_{z\gamma'}(z)=
\sum_{\gamma'} (\Gamma_{xz}^*)_{\beta'}^{\gamma'}\int dy\partial_1^2\psi_t(z-y) 
\Pi_{z\gamma'}(y).
\end{equation}
Since the sum over $\gamma'$ is effectively finite, it suffices to estimate
every summand. On the first factor, we use $\eqref{ap3}_{\preceq \beta'}$;
on the second factor, because of \eqref{sg5}, we may use $\eqref{ap1}_{\preceq \beta'}$.
Appealing also to \eqref{ker02}, we see that the product is estimated by
\begin{align*}
N_0^{[\beta']-[\gamma']}|z-x|^{|\beta'|-|\gamma'|} 
\;N_0^{[\gamma']+1}(\sqrt[4]{t})^{|\gamma'|-2}.
\end{align*}
Provided $|\gamma'|\ge\alpha$, this expression is dominated by the r.~h.~s.~of \eqref{nrc05}.
Indeed, the sum in \eqref{ud07} is effectively restricted to $|\gamma'|\ge\alpha$,
since $\gamma'=e_{(1,0)}$ does not contribute in view of the third item in \eqref{fm4}, 
and since we have\footnote{ The restriction $\alpha<2$ is very convenient here, 
since otherwise the estimate would produce a contribution of the form $|z-x|^{|\beta|-2}$ 
which would not fit our general reconstruction argument. However, if $\alpha>2$ $\Pi^-$ 
is a function, so we may estimate it pointwise in an easier way.} $\alpha\le 2$. 
%

\medskip

\underline{Step 2.} Proof of \eqref{nrc02}. We write $F_{xzt}(z)$ component-wise:
\begin{equation*}
F_{xzt}(z) = \sum_{k\geq 0} \sum_{e_k + \beta_1 +...+\beta_{k+1}=\beta}
\Pi_{x\beta_1}(z)\cdots \Pi_{x\beta_k}(z) \partial_1^2 \Pi_{x\beta_{k+1} t}(z).
\end{equation*}
Since the sums are obviously effectively finite, it is enough to estimate every summand.
According to \eqref{norm02}, we may appeal to  $\eqref{ap1}_{\prec\beta}$
and $\eqref{nrc05}_{\prec\beta}$ to obtain an estimate by
\begin{equation*}
N_0^{[\beta_1]+1}|z-x|^{|\beta_1|}\;\cdots\;N_0^{[\beta_{k}]+1}|z-x|^{|\beta_k|}
\;N_0^{[\beta_{k+1}]+1}(\sqrt[4]{t})^{\alpha-2}(\sqrt[4]{t} + |z-x|)^{|\beta_{k+1}|-\alpha}.
\end{equation*}
Since by $e_k+\beta_1+\cdots+\beta_{k+1}=\beta$,
we have $([\beta_1]+1)+\cdots+([\beta_{k+1}]+1)=[\beta]+1$
and $|\beta_1|+...+|\beta_{k+1}| = |\beta|$, this term coincides with the r.~h.~s.~of 
\eqref{nrc02}.

\medskip

\underline{Step 3.} Proof of \eqref{nrc04}. By definition \eqref{ud14},
\begin{align*}
F_{xz}-F_{xy}&=\Big(\sum_{k\geq 1} \z_k (\Pi_x(z)-\Pi_x(y))\sum_{k'+ k'' = k-1}
\Pi_x^{k'}(z)\Pi_x^{k''}(y) \partial_1^2 \Pi_x\Big)_\beta \\
&\hspace*{-0.65cm}\underset{\eqref{sg6},\eqref{sg15}}{=}-\Big(\sum_{k\geq 1} 
\z_k \Gamma_{xz}^* \Pi_z(y) \sum_{k'+ k'' = k-1}\Pi_x^{k'}(z)\Pi_x^{k''}(y)\partial_1^2 \Pi_x
\Big)_\beta,
\end{align*}
which component-wise assumes the form
\begin{equation*}
	(F_{xz}-F_{xy})_t(z) = -\sum_{k\geq 0}\sum_{e_k + \beta_1 +... +\beta_{k+1} = \beta} \sum_\gamma (\Gamma_{xz}^*)_{\beta_1}^\gamma \Pi_{z\gamma}(y) \Pi_{x\beta_2}(*)\cdots \Pi_{x\beta_k}(*)\partial_1^2\Pi_{x\beta_{k+1} t}(z),
\end{equation*}
where the symbol $*$ stands for either $z$ or $y$. Again, it suffices to estimate each summand;
according to \eqref{norm02} and \eqref{norm03}, we may appeal to $\eqref{ap1}_{\prec\beta}$, 
$\eqref{ap3}_{\prec\beta}$ as well as $\eqref{nrc05}_{\prec\beta}$ to obtain an estimate by
\begin{align*}
N_0^{[\beta_1]-[\gamma]}|z-x|^{|\beta_1|-|\gamma|}\;
N_0^{[\gamma] +1}|y-z|^{|\gamma |}\; 
N_0^{[\beta_2]+1}|*-x|^{|\beta_2|}\;\cdots\;
N_0^{[\beta_k]+1}|*-x|^{|\beta_k|}\\
\times N_0^{[\beta_{k+1}]+1}(\sqrt[4]{t})^{\alpha-2}(\sqrt[4]{t}+|z-x|)^{|\beta_{k+1}|-\alpha}.
\end{align*}
Using $|*-x|\le\sqrt[4]{t}+|y-z|+|z-x|$, this is expression is
\begin{align*}
\le N_0^{([\beta_1]+1)+\cdots+([\beta_{k+1}]+1)}
|y-z|^{|\gamma|}
(\sqrt[4]{t})^{\alpha-2}(\sqrt[4]{t}+|y-z|+|z-x|)^{|\beta_1|+\cdots+|\beta_{k+1}|-|\gamma|-\alpha}.
\end{align*}
Using the same identities on $[\cdot]$ and $|\cdot|$ as in Step 2, this expression is
\begin{align*}
= N_0^{[\beta]+1}
|y-z|^{|\gamma|}
(\sqrt[4]{t})^{\alpha-2}(\sqrt[4]{t}+|y-z|+|z-x|)^{|\beta|-|\gamma|-\alpha},
\end{align*}
which is dominated by the r.~h.~s.~of \eqref{nrc04} because of $|\cdot|\ge 1$.
\end{proof}	

\medskip

\begin{proof}[Proof of Lemma \ref{lemrecon}]
All reconstruction arguments rely on a dyadic decomposition of the 
l.~h.~s.~of \eqref{rc03}. Following the arguments in \cite{OW19}, 
see e.~g. \cite[Subsection 5.5]{OW19}, we implement this decomposition with help
of the convolution semi-group \eqref{ud04}. Indeed, with help
of the diagonal evaluation operator $(EF_{x})(y):=F_{xy}(y)$ and its commutator
with convolution (always in the active variable), we write for any $\tau<t$
\begin{equation*}
F_{xzt}(z)-\int dy \psi_{t-\tau}(y-z) F_{xy \tau}(y)=([(\cdot)_{t-\tau},E](\cdot)_\tau F_x)(z).
\end{equation*}
Specifying to a dyadic fraction $\tau$ of $t$ (i.~e. $\tau \in 2^{-\N}t$)
and appealing once more to \eqref{ud04}, we obtain from telescoping
%
\begin{equation*}
[(\cdot)_{t-\tau},E](\cdot)_\tau
=\sum_{\substack{\tau\leq s < t\\ s\textnormal{ dyadic fraction of $t$}}} 
(\cdot)_{t-2s}[(\cdot)_s,E](\cdot)_s.
\end{equation*}
By our assumption of qualitative continuity, this combines to
\begin{equation}\label{tel01}
F_{xzt}(z)-\int dy \psi_{t}(y-z) F_{xy}(y)
=\sum_{\substack{s < t\\ s\textnormal{ dyadic fraction of $t$}}}([(\cdot)_s,E]F_{xs})_{t-2s}(z).
\end{equation}

\medskip

Equipped with the decomposition \eqref{tel01}, we now estimate
its constituents. Writing
%
\begin{equation*}
([(\cdot)_s,E]F_{xs})(z) = \int dy \psi_s (z-y) (F_{xy s}(y) - F_{xz s}(y)),
\end{equation*}
we obtain by assumption \eqref{rc02} and the moment bounds \eqref{ker02} (also
using $\theta_2,\theta_3\ge 0$),
\begin{align}\label{ud06}
|([(\cdot)_s,E]F_{xs})(z)|&
\leq \int dy |\psi_s (z-y)| (\sqrt[4]{s})^{\theta_1} 
(\sqrt[4]{s} + |z-y|)^{\theta_2} (\sqrt[4]{s} + |z-y| + |z-x|)^{\theta_3}\nonumber\\
&\lesssim (\sqrt[4]{s})^{\theta_1 + \theta_2} (\sqrt[4]{s} + |z-x|)^{\theta_3}.
\end{align}
Writing 
\begin{align*}
([(\cdot)_s,E]F_{xs})_{t-2s}(z)=\int dy\psi_{t-2s}(z-y)
([(\cdot)_s,E]F_{xs})(y),
\end{align*}
we obtain from \eqref{ud06} (with $z$ replaced by $y$) and once more \eqref{ker02}
\begin{align*}
|([(\cdot)_s,E]F_{xs})_{t-2s}(z)|\lesssim (\sqrt[4]{s})^{\theta_1 + \theta_2} 
(\sqrt[4]{t} + |z-x|)^{\theta_3}.
\end{align*}
Since by assumption $\theta_1+\theta_2>0$, when summing over dyadic fractions $s<t$,
the largest scale matters, and thus obtain \eqref{rc03} from \eqref{tel01}.
%
%
%
%
\end{proof}


\subsection{Integration step (proof of Lemma \ref{lemlhs})}
\mbox{}

The integration step is an instance of Schauder theory on some regularity level $\eta$
for the operator $\partial_2-\partial_1^2$ of (parabolic) order 2. 
It states that $(\partial_2-\partial_1^2)^{-1}$ improves regularity by 2 units, from
$\eta-2$ to $\eta>0$.
As for all Schauder theory, $\eta$ is not allowed to be an integer, see \eqref{ud13}.
Lemma \ref{lem:int} differs from a standard Schauder result in that 
it propagates a H\"older regularity that is localized at some base point $x$, 
see the output \eqref{output}; the input \eqref{gi06} being
a localized version of the characterization of (negative) H\"older spaces via convolution, 
cf.~\eqref{normxi}.
Like the classical proof, ours is based on
a kernel representation of $(\partial_2-\partial_1^2)^{-1}$, 
and relies on distinguishing a near-field and a far-field part. 
However, our kernel representation is not based on the
fundamental solution of $\partial_2-\partial_1^2$, but on our semi-group generated by
the square modulus of that operator, see \eqref{gi07}.
\begin{lemma}\label{lem:int}
Let $\eta>0$ satisfy
\begin{align}\label{ud13}
\eta\not\in\mathbb{N}.
\end{align}
Suppose the smooth function $g$ satisfies for some exponent $0<\theta\le\eta$ and some point $x$
\begin{equation}\label{gi06}
|g_t (y)| \leq (\sqrt[4]{t})^{\theta-2}(\sqrt[4]{t}+|y-x|)^{\eta-\theta}.
\end{equation}
Then 
\begin{equation}\label{gi07}
u=-\int_0^\infty dt (1-\mathrm{T}_x^k)(\partial_2 + \partial_1^2)g_t,
\end{equation}
where $\mathrm{T}_x^k$ stands for taking the Taylor polynomial of (parabolic) order 
$k = \lfloor\eta\rfloor$ centered at $x$, is well-defined and satisfies the estimate
\begin{equation}\label{output}
|u(y)|\lesssim |y-x|^{\eta}.
\end{equation}
Moreover, $u$ solves \eqref{jan08}.
\end{lemma}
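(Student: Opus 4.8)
The representation \eqref{gi07} is the regularized version of the formal identity $(\partial_2-\partial_1^2)^{-1}g=-\int_0^\infty(\partial_2+\partial_1^2)g_t\,dt$, which follows from $\partial_t\psi_t=(\partial_2-\partial_1^2)(\partial_2+\partial_1^2)\psi_t$ (the Fourier symbol of $\psi_{t=1}$ being $\exp(-k_1^4-k_2^2)$); the Taylor truncation $1-\mathrm{T}_x^k$ is inserted only to make the $t=\infty$ end of the integral converge. The plan is to: (1) upgrade \eqref{gi06} to pointwise bounds on all derivatives of $h_t:=(\partial_2+\partial_1^2)g_t$; (2) prove absolute convergence of \eqref{gi07} together with \eqref{output} by a near-field/far-field split in $t$; (3) identify the equation solved by $u$ and check the remaining items of \eqref{jan08}.

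\textbf{Step 1.} Using the semi-group property $g_t=\psi_{t/2}*g_{t/2}$, differentiating the kernel, invoking the moment bounds \eqref{ker02} (with $\partial_2+\partial_1^2$ contributing $2$ to the parabolic order $|\mathbf{n}|=n_1+2n_2$, cf.\ \eqref{polyn}), and splitting $(\sqrt[4]{t}+|z-x|)^{\eta-\theta}\lesssim(\sqrt[4]{t}+|y-x|)^{\eta-\theta}+|y-z|^{\eta-\theta}$ (legitimate since $\eta-\theta\ge0$), one obtains for every spatial multi-index $\mathbf{n}$
\[
|\partial^{\mathbf{n}}h_t(y)|\lesssim(\sqrt[4]{t})^{\theta-4-|\mathbf{n}|}(\sqrt[4]{t}+|y-x|)^{\eta-\theta},\qquad |\partial^{\mathbf{n}}g_t(y)|\lesssim(\sqrt[4]{t})^{\theta-2-|\mathbf{n}|}(\sqrt[4]{t}+|y-x|)^{\eta-\theta};
\]
in particular $|\partial^{\mathbf{n}}h_t(x)|\lesssim(\sqrt[4]{t})^{\eta-4-|\mathbf{n}|}$.

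\textbf{Step 2.} Fix $y$, set $R:=|y-x|$, and split $\int_0^\infty dt$ at $t=R^4$. For $\sqrt[4]{t}<R$ I would bound crudely $|(1-\mathrm{T}_x^k)h_t(y)|\le|h_t(y)|+|\mathrm{T}_x^kh_t(y)|$; the Taylor term is $\lesssim\sum_{0\le j\le k}(\sqrt[4]{t})^{\eta-4-j}R^j$ by Step 1 and $|(y-x)^{\mathbf{n}}|\le R^{|\mathbf{n}|}$, whose largest summand in this range is $j=k$, so (substituting $s=\sqrt[4]{t}$) $\int_0^{R^4}(\sqrt[4]{t})^{\eta-4-k}R^k\,dt\simeq R^k\int_0^R s^{\eta-1-k}\,ds\simeq R^\eta$, convergent at $s=0$ precisely because $\eta>k=\lfloor\eta\rfloor$; similarly $\int_0^{R^4}|h_t(y)|\,dt\lesssim R^{\eta-\theta}\int_0^R s^{\theta-1}\,ds\simeq R^\eta$, convergent since $\theta>0$. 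For $\sqrt[4]{t}\ge R$, $(1-\mathrm{T}_x^k)h_t(y)$ is the anisotropic Taylor remainder of order $k$ and is dominated by $\sum_{k<|\mathbf{n}|\le k+2}R^{|\mathbf{n}|}\sup|\partial^{\mathbf{n}}h_t|$ over the parabolic segment joining $x$ to $y$; along that segment the parabolic distance to $x$ is $\le R\le\sqrt[4]{t}$, so Step 1 gives $\sup|\partial^{\mathbf{n}}h_t|\lesssim(\sqrt[4]{t})^{\eta-4-|\mathbf{n}|}$ and $\int_{R^4}^\infty R^{|\mathbf{n}|}(\sqrt[4]{t})^{\eta-4-|\mathbf{n}|}\,dt\simeq R^{|\mathbf{n}|}\int_R^\infty s^{\eta-1-|\mathbf{n}|}\,ds\simeq R^\eta$, convergent at $s=\infty$ because $|\mathbf{n}|\ge k+1>\eta$. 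Adding the two regimes yields $\int_0^\infty|(1-\mathrm{T}_x^k)h_t(y)|\,dt\lesssim R^\eta$, whence $u(y)$ is well defined and \eqref{output} holds; the same estimates, localized, show the integral converges locally uniformly.

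\textbf{Step 3.} From $\partial_tg_t=(\partial_2-\partial_1^2)h_t$ and the commutation $(\partial_2-\partial_1^2)\,\mathrm{T}_x^k=\mathrm{T}_x^{k-2}(\partial_2-\partial_1^2)$ (a direct check on monomials, with the convention $\mathrm{T}_x^j:=0$ for $j<0$), testing \eqref{gi07} against $\phi\in C_c^\infty$ — using Step 2 to justify Fubini, integration by parts in space-time and the fundamental theorem of calculus in $t$ — gives
\[
\langle(\partial_2-\partial_1^2)u,\phi\rangle=-\int_0^\infty\big\langle(1-\mathrm{T}_x^{k-2})\partial_tg_t,\phi\big\rangle\,dt=-\big\langle\,[(1-\mathrm{T}_x^{k-2})g_t]_{t=0}^{t=\infty},\,\phi\,\big\rangle.
\]
As $t\downarrow0$, $g_t\to g$ in $C^\infty_{\mathrm{loc}}$; as $t\uparrow\infty$, $(1-\mathrm{T}_x^{k-2})g_t\to0$ locally uniformly, since its remainder involves only derivatives $\partial^{\mathbf{n}}g_t$ with $|\mathbf{n}|\ge k-1$, which by Step 1 decay like $(\sqrt[4]{t})^{\eta-2-|\mathbf{n}|}$ with exponent $\le\eta-k-1<0$. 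Hence $(\partial_2-\partial_1^2)u=g-\mathrm{T}_x^{k-2}g$, i.e.\ the first line of \eqref{jan08} holds with $p:=\mathrm{T}_x^{k-2}g$, a polynomial of degree $\le k-2$. By hypoellipticity of $\partial_2-\partial_1^2$ and smoothness of $g-p$, $u$ is smooth; a smooth function obeying \eqref{output} with $\eta\notin\mathbb{N}$ (so $\eta>k$) has all parabolic derivatives of order $\le k$ vanishing at $x$ — the second line of \eqref{jan08} — while \eqref{output} itself is the third line with the same $\eta<k+1$. Thus all four items of \eqref{jan08} hold. The main obstacle is Step 2: matching the powers of $\sqrt[4]{t}$, of $R$ and the Taylor order so that \emph{both} endpoints of the $t$-integral converge — exactly where $k=\lfloor\eta\rfloor$ and $\eta\notin\mathbb{N}$ are used — together with the bookkeeping of the anisotropic Taylor remainder; a lesser subtlety is the vanishing of the $t=\infty$ boundary term in Step 3.
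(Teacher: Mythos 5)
Your proof is correct and follows essentially the same route as the paper's: upgrading \eqref{gi06} to derivative bounds via the semigroup property and the moment bounds \eqref{ker02}, splitting the $t$-integral at $t=|y-x|^4$ with an anisotropic Taylor-remainder bound in the far field and a crude triangle inequality in the near field, and identifying the equation through $\partial_t g_t=(\partial_2-\partial_1^2)(\partial_2+\partial_1^2)g_t$ together with the commutation of $\mathrm{T}_x$ with the operator. The only differences are in execution, not substance: the paper handles the Taylor remainder by a one-dimensional expansion along the segment and obtains $\partial^{\mathbf{n}}u(x)=0$ and $\deg p\le k-2$ by differentiating under the integral in \eqref{gi07}, whereas you argue distributionally with boundary terms at $t=0,\infty$, identify $p=\mathrm{T}_x^{k-2}g$ explicitly, and recover the vanishing derivatives at $x$ from \eqref{output} and smoothness.
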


\begin{proof}[Proof of Lemma \ref{lemlhs}]
For $\beta$ not purely polynomial and thus
$[\beta]\ge 0$, we may use Lemma \ref{lem:int} with $\eta=|\beta|\ge\alpha$, 
which satisfies \eqref{ud13} by our assumption $\alpha\not\in\mathbb{Q}$, see \eqref{hom7}, and  $g=N_0^{-([\beta]+1)}\Pi_{x\beta}^-$. The input \eqref{gi06} is 
provided by $\eqref{ap1min}_\beta$ with $\theta=\alpha\in(0,\eta]$. 
By the uniqueness statement of Lemma \ref{lem:uni}, $u=N_0^{-([\beta]+1)}\Pi_{x\beta}$ so that
the output \eqref{output} amounts to the desired $\eqref{ap1}_\beta$.
\end{proof}

\begin{proof}[Proof of Lemma \ref{lem:int}]
In preparation of the estimates of the near and far-field contributions to \eqref{gi07}
we note that by \eqref{ud04} we may write
$\partial^{\bf n}g_t(y)$ $=\int dz\partial^{\bf n}\psi_\frac{t}{2}(y-z)g_\frac{t}{2}(z)$, so that
by \eqref{ker02}, our assumption \eqref{gi06} can be upgraded to
\begin{align}\label{ud10}
|\partial^{\bf n}g_t(y)|\lesssim(\sqrt[4]{t})^{\theta-2-|{\bf n}|}
(\sqrt[4]{t}+|y-x|)^{\eta-\theta}.
\end{align}
				
\medskip
		
\underline{Step 1.} Far-field estimate:
\begin{equation}\label{gi04}
\int_{|y-x|^4}^{\infty} dt \big|(1-\mathrm{T}_x^k)(\partial_2 + \partial_1^2)g_t(y)\big|
\lesssim |y-x|^{\eta}.
\end{equation}
From the integral representation of Taylor's remainder 
for a function $h$ of a single variable $s$
\begin{equation*}
h(1) = \sum_{j=0}^k \frac{1}{j!} \frac{d^j h}{ds^j}(0) 
+ \int_0^1 ds \frac{1}{k!} (1-s)^k \frac{d^{k+1}h}{ds^{k+1}}(s),
\end{equation*}
which we apply to $h(s) := (1-\mathrm{T}_x^k)(\partial_2 + \partial_1^2)g_t (sy + (1-s)x)$, 
we obtain by $|{\bf n}|\ge n_1+n_2$
\begin{equation*}
\begin{split}
&(1-\mathrm{T}_x^k)(\partial_2 + \partial_1^2)g_t(y)
=\sum_{\substack{|\mathbf{n}| > k \\ n_1+n_2 \leq k}}(y-x)^{\mathbf{n}} 
\frac{1}{\mathbf{n}!}\partial^{\mathbf{n}}(\partial_2 + \partial_1^2)g_t(x)\\
&+ \sum_{n_1 + n_2=k+1} (y-x)^{\mathbf{n}}\int_0^1 ds (1-s)^k \frac{k+1}{\mathbf{n}!} 
\partial^{\mathbf{n}}(\partial_2 + \partial_1^2) g_t(sy+(1-s)x).
\end{split}
\end{equation*}
Estimate \eqref{ud10} thus yields
\begin{align*}
|(1-\mathrm{T}_x^k)(\partial_2 + \partial_1^2)g_t(y)|
&\lesssim \sum_{\substack{|\mathbf{n}|>k\\n_1+n_2 \leq k+1}} |y-x|^{|\mathbf{n}|}
\sup_{|z-x|\leq |y-x|} |\partial^{\mathbf{n}}(\partial_2 + \partial_1^2) g_t(z)|\\
&\lesssim \sum_{\substack{|\mathbf{n}| > k \\ n_1+n_2 \leq k+1}} |y-x|^{|\mathbf{n}|}
(\sqrt[4]{t})^{\theta-4-|\mathbf{n}|}(\sqrt[4]{t}+|y-x|)^{\eta-\theta}.
\end{align*}
Since by definition $k=\lfloor\eta\rfloor$, we have $\eta-|\mathbf{n}|<0$, 
every summand is integrable for $t\uparrow\infty$. 
Integrating $t$ from $|y-x|^4$ onwards yields \eqref{gi04}.
		
\medskip
		
\underline{Step 2.} Near-field estimate:
\begin{equation}\label{gi05}
\int_{0}^{|y-x|^4} dt \big|(1-\mathrm{T}_x^k)(\partial_2 + \partial_1^2)g_t(y)\big|
\lesssim |y-x|^{\theta + \eta}.
\end{equation}
We use the triangle inequality in the form of
\begin{align*}
|(1-\mathrm{T}_x^k)(\partial_2 + \partial_1^2)g_t(y)| 
&\le|(\partial_2 + \partial_1^2)g_t(y)|+\sum_{|\mathbf{n}|\leq k}
\frac{1}{|{\bf n}!|}|y-x|^{|\mathbf{n}|}|\partial^{\mathbf{n}}(\partial_2 + \partial_1^2)g_t(x)|\\
&\hspace*{-0.25cm}\underset{\eqref{ud10}}{\lesssim}
(\sqrt[4]{t})^{\theta-4}(\sqrt[4]{t}+|y-x|)^{\eta-\theta}
+\sum_{|\mathbf{n}|\leq k}
|y-x|^{|\mathbf{n}|}(\sqrt[4]{t})^{\eta-4-|{\bf n}|}.
\end{align*}
%
Both expressions are integrable for $t\downarrow 0$, 
the former due to $\theta>0$, 
the latter due to $\eta>k$, which follows from the definition $k=\lfloor\eta\rfloor$ and
\eqref{ud13}. Integrating $t$ up to $|y-x|^4$ yields \eqref{gi05}.

\medskip

\underline{Step 3.} We claim that $u$ solves \eqref{jan08}. By the decay for $t\uparrow\infty$
established in Step 1 and by the (qualitative) smoothness of $g$,
which takes care of $t\downarrow0$, we have that also $u$ is smooth and
\begin{align}\label{ud11}
\partial^{\bf n}u
=-\int_0^\infty dt\partial^{\bf n}(1-{\rm T}_x^k)(\partial_2+\partial_1^2)g_t
=-\int_0^\infty dt(1-{\rm T}_x^{k-|{\bf n}|})\partial^{\bf n}(\partial_2+\partial_1^2)g_t,
\end{align}
with the integral converging absolutely and pointwise.
In particular, the presence of the Taylor term in representation \eqref{ud11} implies
$\partial^{\bf n}u(x)=0$ for $|{\bf n}|\le k$, as desired.
Since $(\cdot)_t$ is the semigroup generated by
$-(\partial_2-\partial_1^2)(\partial_2+\partial_1^2)$, \eqref{ud11} implies
$\partial^{\bf n}(\partial_2-\partial_1^2)u$ $=-\int_0^\infty dt$
$(1-{\rm T}_x^{k-|{\bf n}|-2})\partial^{\bf n}\partial_tg_t$ and thus
\begin{align}\label{ud12}
\partial^{\bf n}(\partial_2-\partial_1^2)u
=-\int_0^\infty dt\partial_t\partial^{\bf n}g_t\quad\mbox{for}\;|{\bf n}|>k-2.
\end{align}
By \eqref{ud10},  $\partial^{\bf n}g_t$ is (qualitatively) asymptotically dominated
by $(\sqrt[4]{t})^{\eta-|{\bf n}|-2}$, which by definition $k=\lfloor\eta\rfloor$ 
amounts to a decay. Hence \eqref{ud12} reduces to
$\partial^{\bf n}(\partial_2-\partial_1^2)u$ $=\partial^{\bf n}g$ for all $|{\bf n}|>k-2$,
which yields that $p:=(\partial_2-\partial_1^2)u-g$ is a polynomial of degree $\le k-2$, as desired.
\end{proof}


\subsection{Three-point argument (proof of Lemma \ref{lemtpa})}
\mbox{}
\begin{proof}[Proof of Lemma \ref{lemtpa}]
From \eqref{sg6} and \eqref{sg15} we deduce the following identity involving three points $x,y,z$
\begin{equation*}
\Pi_{x}(z)-\Pi_{x}(y)-\Pi_{y}(z)=(\Gamma_{xy}^*-{\rm id})\Pi_{y}(z).
\end{equation*}
With help of \eqref{sg2} and the third item in \eqref{fm4}, we rewrite this as
\begin{equation*}
\Pi_{x\beta}(z)-\Pi_{x\beta}(y)-\Pi_{y\beta}(z)
=\sum_{\gamma\neq \text{p.p.}}(\Gamma_{xy}^*-{\rm id})_\beta^\gamma\Pi_{y\gamma}(z)
+\sum_{\n\not=\0}\pi_{xy\beta}^{(\n)}(z-y)^{\n}.
\end{equation*}
By $\eqref{ap1}_\beta$ and $\eqref{ap3}_\beta^\gamma$ for $\gamma$ not purely polynomial, 
this implies
\begin{align*}
\bigg|\sum_{\n\neq \0} \pi_{xy\beta}^{(\n)}(z-y)^\n\bigg| 
&\lesssim N_0^{[\beta]+1} (|x-z|^{|\beta|} + |y-x|^{|\beta|} + |z-y|^{|\beta|} 
+ \sum_{\substack{|\gamma|<|\beta|\\ \gamma\neq \text{p.p.}}}
|y-x|^{|\beta|-|\gamma|}|z-y|^{|\gamma|}\\
&\lesssim N_0^{[\beta]+1}(|y-x|^{|\beta|} + |z-y|^{|\beta|}).
\end{align*}
Writing $z-y=|y-x|\hat{z}$, the above estimate can be reformulated as
\begin{equation*}
\left|\sum_{\n\not=0}\frac{\pi_{xy\beta}^{(\n)}}{|y-x|^{|\beta|-|\n|}}\hat{z}^{\n}\right|
\lesssim N_0^{[\beta]+1}\quad\mbox{for all}\;\hat z\;\mbox{with}\;|\hat z|\le 1.
\end{equation*}
Since by \eqref{sg9}, the l.~h.~s.~is a polynomial in $\hat z$ of degree $<|\beta|$,
this (locally) uniform estimate implies\footnote{e.~g.~by appealing to equivalence of norms} 
an estimate on its coefficients 
\begin{equation*}
\frac{|\pi_{xy\beta}^{(\n)}|}{|y-x|^{|\beta|-|\n|}}\lesssim N_0^{[\beta]+1},
\end{equation*}
which coincides with $\eqref{ap2}_\beta$.
\end{proof}

\bibliographystyle{abbrv}
\bibliography{bib_thesis}{}

\end{document}